\documentclass[12pt]{amsart}

\usepackage{fullpage}
\usepackage{mathtools}
\usepackage{amssymb,amsmath,amsthm,amscd,mathrsfs,graphicx}
\usepackage[dvipsnames]{xcolor}
\usepackage{bm}
\usepackage{hyperref}
\usepackage{dsfont}
\usepackage{enumerate}
\usepackage{float}
\usepackage{latexsym, amsxtra}
\usepackage{pdfpages}
\usepackage{multicol}
\usepackage[normalem]{ulem}
\usepackage{stmaryrd}
\usepackage{tikz}
\usepackage[T1]{fontenc}
\usepackage{verbatim}
\usepackage{xy}
\usepackage{indentfirst}
\usepackage{tikz-cd}
\usepackage{url}

\makeatletter
\def\thm@space@setup{%
  \thm@preskip=2ex \thm@postskip=2ex
}
\makeatother

\hypersetup{
	hidelinks,
	pdftitle={Small-Subgroup Criteria for Liftability of Projective Automorphism Groups of Smooth Hypersurfaces},
	pdfauthor={Baiting Xie and Zhiwei Zheng}
}

\numberwithin{equation}{section}
\theoremstyle{plain}

\newtheorem{thm}{Theorem~}[section] 
\newtheorem{lem}[thm]{Lemma~}

\newtheorem{prop}[thm]{Proposition~}

\newtheorem{cor}[thm]{Corollary~}

\theoremstyle{remark}
\newtheorem{rmk}[thm]{Remark~}

\theoremstyle{definition}
\newtheorem{defn}[thm]{Definition~}

\newcommand{\KK}{\mathbb{K}}
\newcommand{\ZZ}{\mathbb{Z}}

\newcommand{\PP}{\mathbb{P}}

\newcommand\PGL{\mathrm{PGL}}

\newcommand\id{\mathrm{id}}

\newcommand\I{\mathrm{I}}

\newcommand\SL{\mathrm{SL}}

\newcommand\diag{\mathrm{diag}}
\newcommand\GL{\mathrm{GL}}

\newcommand\ord{\mathrm{ord}}

\newcommand\Lin{\mathrm{Lin}}

\title{Small-Subgroup Criteria for Liftability of
Automorphism Groups of Smooth Hypersurfaces}

 \author[B. Xie]{Baiting Xie}
\address{Tsinghua University, China}
\email{xbt23@mails.tsinghua.edu.cn}

 \author[Z. Zheng]{Zhiwei Zheng}
\address{Tsinghua University, China}
\email{zhengzhiwei@mail.tsinghua.edu.cn}
\date{}

\begin{document}
\bibliographystyle{amsalpha}

\begin{abstract}  
In this paper, building on our previous Sylow criteria, we establish
small-subgroup criteria of liftability and $F$-liftability for the linear automorphism group $G$ of smooth hypersurfaces $X$ over algebraically closed field of characteristic zero. When $\dim X = p-2$ for some odd prime $p$, we prove that ($F$-)liftability of any finite subgroup of $G$ can be tested on its $p$-subgroups of order at most $p^2$. When $X$ is a degree $p$ hypersurface of dimension $2p-2$, we prove that ($F$-)liftability of $G$ can be tested on all its $p$-subgroups of order at most $p^3$, which is sharp
for $p\geq5$. When $p=3$, this bound improves to $9$, giving the corresponding criteria for smooth cubic fourfolds.
\end{abstract}

\maketitle

\section{Introduction}
\label{section: introduction}
Throughout the paper, let $\KK$ be an algebraically closed field of characteristic zero. Let $V$ be a $\KK$-vector space of dimension $N$, and let $F\in S^d(V^*)$ be a nonsingular homogeneous form. Write $\Lin(F)\subseteq\PGL(V)$ for the projective transformations preserving $F$ up to a scalar. We study when finite subgroups of $\PGL(V)$ lift isomorphically to $\GL(V)$ and, for subgroups of $\Lin(F)$, when such a lifting can be chosen to preserve $F$ itself.

Both questions can be approached via group cohomology. Let
$\pi\colon\GL(V)\to\PGL(V)$ be the quotient map. For a finite subgroup
$G\subseteq\PGL(V)$, we have exact sequence
\begin{equation*}
	1\longrightarrow\KK^\times\longrightarrow\pi^{-1}(G)
	\longrightarrow G\longrightarrow1.
\end{equation*}
Its class in $H^2(G,\KK^\times)$ is the obstruction to liftability. If
$G\subseteq\Lin(F)$, every element of $G$ has an $F$-preserving
representative because $\KK$ is algebraically closed. Hence there is a second
central extension
\begin{equation*}
	1\longrightarrow\mu_d\longrightarrow
	\{A\in\GL(V)\mid \pi(A)\in G,\ F\circ A=F\}
	\longrightarrow G\longrightarrow1,
\end{equation*}
where $\mu_d\subseteq\KK^\times$ is the group of $d$-th roots of unity. Its
class in $H^2(G,\mu_d)$ vanishes exactly when $G$ is $F$-liftable.

Our previous paper \cite{xie2026sylowcriterialiftabilityautomorphism}  established Sylow criteria for ($F$-)liftability. Precisely, for $\KK$ algebraically closed in characteristic zero and $F$ smooth, we have that a finite subgroup $G<\Lin(F)$ is ($F$-)liftable if and only if for any $p\mid \gcd(|G|,N,d)$, there exists a Sylow $p$ subgroup of $G$ that is ($F$-)liftable. See \cite[Theorem 1.1, Theorem 1.3]{xie2026sylowcriterialiftabilityautomorphism}. 
Its proof uses restriction--corestriction and transfer
in group cohomology; see \cite[Chapter III, \S9]{Brown1982Cohomology} and
\cite[Theorem 7]{Eckmann1953Transfer}. Starting from this Sylow reduction, we continue to study how to reduce ($F$-)liftability of a $p$-group to its subgroups. For simplicity, we introduce the following notions:

\begin{defn}\label{definition: L and FL}
	Let $p$ be a prime and $k$ be a positive integer. We say a finite subgroup $G\subseteq\PGL(V)$ satisfies the property $\mathrm{L}(p,k)$ if  all its $p$-subgroups of order at most $p^k$ are liftable. We say a finite subgroup $G \subseteq \Lin(F)$ satisfies the property $\mathrm{FL}(p,k)$ if all its $p$-subgroups of order at most $p^k$ are $F$-liftable.
\end{defn}

Then the following question naturally arises: for some fixed $p$-group $G$, find the smallest integer $k$ such that $G$ is liftable (resp. $F$-liftable) if and only if it satisfies the property $\mathrm{L}(p,k)$ (resp. $\mathrm{FL}(p,k)$).

Our first main result anwswers this question for the case of prime dimension $N=p$:

\begin{thm}
	\label{main theorem: subgroup criterion of liftability when N=p}
	Let $p$ be a prime and let $V$ be a $p$-dimensional $\KK$-vector space. A finite subgroup $G\subseteq\PGL(V)$ is liftable if and only if all its subgroups isomorphic to $(\ZZ/p\ZZ)^2$ are liftable, or equivalently, $G$ satisfies the property $\mathrm{L}(p,2)$.
\end{thm}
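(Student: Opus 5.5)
The plan is to reduce to $p$-groups via the Sylow criterion, and then to show directly that a non-liftable $p$-subgroup of $\PGL(V)$ contains a non-liftable $(\ZZ/p\ZZ)^2$.

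\emph{Reduction to $p$-groups.} One direction is trivial: subgroups of a liftable group are liftable. For the converse, note that the obstruction class of $G$ in $H^2(G,\KK^\times)$ is killed by $N=p$. Indeed, choosing determinant-one representatives gives a central extension of $G$ by $\mu_p$, and the obstruction class is the image of its class in $H^2(G,\mu_p)$. Hence the class is $p$-primary. By restriction--corestriction, its restriction to a Sylow $p$-subgroup $P$ is injective on the $p$-primary part. This is the same Sylow argument as in \cite{xie2026sylowcriterialiftabilityautomorphism}, and it does not need $F$. So it suffices to show the following: if $P\subseteq\PGL(V)$ is a non-liftable $p$-group, then $P$ contains a non-liftable subgroup $\langle \bar a,\bar b\rangle\cong(\ZZ/p\ZZ)^2$.

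\emph{Structure of $\tilde P$.} Let $\tilde P=\pi^{-1}(P)\cap\SL(V)$. It is a finite $p$-group, a central extension of $P$ by $\mu_p$.

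If $V$ is a reducible $\tilde P$-module, then every irreducible constituent has $p$-power dimension less than $p$, hence dimension $1$. Since $\tilde P$ is finite, $V$ is semisimple, so $\tilde P$ is diagonalizable. Then $P$ lies in the image $T/\KK^\times$ of a maximal torus $T$. The extension $1\to\KK^\times\to\pi^{-1}(P)\to P\to1$ is then an extension of abelian groups inside $T$, and it splits because $\KK^\times$ is divisible. Thus $P$ would be liftable, a contradiction. So $V$ is an irreducible $\tilde P$-module of dimension $p$, and in particular $\tilde P$ is nonabelian.

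Choose $\tilde a$ in the second centre of $\tilde P$ but not in its centre. Then there is some $\tilde b\in\tilde P$ with $[\tilde a,\tilde b]=\zeta$ a nontrivial central element. By Schur's lemma $\zeta$ is a scalar, and taking determinants shows $\zeta\in\mu_p$, so $\zeta$ is a primitive $p$-th root of unity.

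\emph{The key step: $\bar a,\bar b$ have order $p$.} From $\tilde b\tilde a\tilde b^{-1}=\zeta^{-1}\tilde a$ (up to the convention for commutators), conjugation by $\tilde b$ multiplies the eigenvalue set of $\tilde a$ by $\zeta$. Since $\tilde a$ is diagonalizable (finite order), its spectrum is a union of full cosets $\lambda\mu_p$. As $\dim V=p$, the spectrum is exactly $\{\lambda\zeta^i\}_{0\le i<p}$, each eigenvalue with multiplicity one. Therefore $\tilde a^p=\lambda^p\cdot\id$ is a scalar, and $\bar a=\pi(\tilde a)$ has order exactly $p$; it is nontrivial since $\tilde a$ is not central. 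The same argument with the roles exchanged shows that $\bar b$ has order $p$.

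\emph{Conclusion.} The images $\bar a,\bar b$ commute in $\PGL(V)$, and $\bar b\notin\langle\bar a\rangle$: otherwise $\tilde b$ would be a scalar times a power of $\tilde a$ and would commute with $\tilde a$. Hence $A=\langle\bar a,\bar b\rangle\cong(\ZZ/p\ZZ)^2$. The commutator of arbitrary lifts of $\bar a,\bar b$ in $\GL(V)$ does not depend on the choice of scalars, and here it equals $\zeta\neq1$. So no lift of $A$ can be abelian, and $A$ is non-liftable.

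I expect the main obstacle to be this order-$p$ step. For a general abelian non-liftable subgroup the commutator pairing can vanish on elements of order $p$ (think of $\ZZ/p^2\times\ZZ/p^2$). The argument avoids this only because the eigenvalue-permutation argument pins the spectrum down in dimension exactly $p$.
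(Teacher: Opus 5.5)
Your proof is correct, and it differs from the paper's in how the middle step is organized. The two arguments share the outer skeleton:
\begin{itemize}
\item the Sylow reduction (your restriction--corestriction argument is the one behind Lemma~\ref{lemma: Sylow criterion of liftability});
\item the dimension count forcing a reducible $p$-dimensional representation of a $p$-group to split into lines;
\item the eigenvalue-coset argument, which is the content of Lemma~\ref{lemma: structure of semi-commutative pair when N=p}.
\end{itemize}

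The paper first proves Proposition~\ref{proposition: liftability of p-groups when N=p}. There, if all abelian subgroups of a $p$-group $G$ are liftable, Lemma~\ref{lemma: the space decomposition} makes the eigenspaces of a lift of a central element of order $p$ invariant. So $V$ is reducible and $\pi^{-1}(G)$ is abelian. The theorem then takes an arbitrary non-liftable abelian subgroup $H$ and two non-commuting lifts in $\pi^{-1}(H)$, whose commutator is automatically scalar.

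You instead split on whether $\tilde P$ acts irreducibly. In the irreducible case, Schur's lemma gives $Z(\tilde P)=\mu_p$, so $Z_2(\tilde P)$ is the preimage of $Z(P)$. Nilpotency then supplies $\tilde a\in Z_2(\tilde P)\setminus Z(\tilde P)$ and $\tilde b$ with a nontrivial scalar commutator. This is essentially the contrapositive of the paper's use of Lemma~\ref{lemma: the space decomposition}, phrased group-theoretically.

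Your route is shorter and self-contained, and it bypasses the intermediate abelian subgroup. The paper's route buys reusable statements:
\begin{itemize}
\item the equivalence ``liftable $\Leftrightarrow$ $\pi^{-1}(G)$ abelian'' is used again in Proposition~\ref{proposition: F-liftability when N=p};
\item the explicit cyclic basis of Lemma~\ref{lemma: structure of semi-commutative pair when N=p} feeds Corollary~\ref{corollary: structure of unliftable G when N=p} and Remark~\ref{remark: structure of non-liftable G};
\item the same eigenspace-invariance mechanism drives the $N=2p$ arguments.
\end{itemize}
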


For $F$-liftability, we prove:

\begin{thm}
	\label{main theorem: subgroup criterion for F-liftability when N=p}
	Let $p$ be a prime, let $V$ be a $p$-dimensional $\KK$-vector space, and let $F\in S^d(V^*)$ be a nonsingular homogeneous form of degree $d\ge 1$. A finite subgroup $G\subseteq\Lin(F)$ is $F$-liftable if and only if all its subgroups isomorphic to $(\ZZ/p\ZZ)^2$ are liftable and all its elements of order $p$ are $F$-liftable, or equivalently, $G$ satisfies the property $\mathrm{L}(p,2)$ and $\mathrm{FL}(p,1)$.
\end{thm}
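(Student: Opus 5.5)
The plan is to reduce to $p$-groups with the Sylow criterion, lift $G$ linearly using Theorem \ref{main theorem: subgroup criterion of liftability when N=p}, and then adjust that lift by a character so that it preserves $F$. The whole argument comes down to showing that a certain character is a $d$-th power.

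\emph{Reduction.} The condition is clearly necessary, since subgroups of $F$-liftable groups are $F$-liftable and $F$-liftable groups are liftable. For sufficiency, apply the Sylow criterion \cite[Theorem 1.3]{xie2026sylowcriterialiftabilityautomorphism}. Since $N=p$, the only relevant prime is $p$, and only when $p\mid d$ and $p\mid |G|$. Otherwise $G$ is automatically $F$-liftable. So we may assume $p\mid d$ and that $G$ is a $p$-group; write $d=p^v m$ with $v\ge1$ and $p\nmid m$.

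\emph{Existence of a lift and the twisting character.} By hypothesis $G$ satisfies $\mathrm{L}(p,2)$, so Theorem \ref{main theorem: subgroup criterion of liftability when N=p} gives a subgroup $\widetilde G\subseteq\GL(V)$ mapping isomorphically onto $G$; write $A_g$ for the lift of $g$. Since $F\circ A_g=\chi(g)F$ for a scalar $\chi(g)$, we get a homomorphism $\chi\colon G\to\KK^\times$. Any other isomorphic lift has the form $\psi(g)A_g$ with $\psi\in\mathrm{Hom}(G,\KK^\times)$, and it multiplies $F$ by $\chi\psi^d$. Hence $G$ is $F$-liftable if and only if $\chi\in\mathrm{Hom}(G,\KK^\times)^d$. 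Equivalently, in cohomological terms, the $F$-lifting class is $\delta(\chi)$ for the connecting map of $1\to\mu_d\to\KK^\times\xrightarrow{d}\KK^\times\to1$.

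\emph{Translation to a torsion condition.} Since $G^{ab}$ is a finite $p$-group and $\mathrm{Hom}(G^{ab},\KK^\times)$ is its Pontryagin dual, the $d$-th powers coincide with the $p^v$-th powers. A character is a $p^v$-th power exactly when it is trivial on the $p^v$-torsion subgroup $G^{ab}[p^v]$. So the goal becomes: $\chi$ kills every element of $G^{ab}$ of order dividing $p^v$.

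\emph{Using $\mathrm{FL}(p,1)$.} Let $g\in G$ have order $p$. Any isomorphic lift of $\langle g\rangle$ is $\zeta A_g$ with $\zeta^p=1$, and $(\zeta A_g)$ multiplies $F$ by $\zeta^d\chi(g)=\chi(g)$ because $p\mid d$. So $\mathrm{FL}(p,1)$ says exactly that $\chi(g)=1$ for every $g\in G$ of order $p$. When $v=1$, and when every $p$-torsion class of $G^{ab}$ is represented by an element of order $p$ in $G$, this already finishes the proof.

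\emph{Main obstacle.} The hard step is passing from "trivial on order-$p$ elements of $G$" to "trivial on $G^{ab}[p^v]$". This requires the structure of $p$-subgroups of $\GL_p$ together with smoothness of $F$. I would split $\widetilde G$ into two cases.

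In the reducible case, $\widetilde G$ is abelian and diagonalizable, say in coordinates $x_1,\dots,x_p$. Smoothness of $F$ forces, for each $i$, a monomial $x_i^{d-1}x_j$ to occur in $F$. Comparing eigenvalues on these monomials should pin down $\chi$ as a product of $(d-1)$-th and first powers of the diagonal characters, and this can be rewritten as a $d$-th power up to a character of exponent $p$.

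In the irreducible case, $\widetilde G$ is monomial, induced from an index-$p$ diagonal subgroup $D$, with a cyclic permutation element $\sigma$. I would restrict to $D$, apply the diagonal analysis there, and then use $\sigma$ (together with elements of order $p$ of the form $\sigma\cdot$diagonal) to reach all of $G^{ab}[p^v]$.

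I expect this irreducible monomial case, and in particular keeping control of $\chi$ on $p$-power-order elements that become $p$-torsion only in $G^{ab}$, to be the real difficulty.
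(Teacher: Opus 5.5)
Your reduction and reformulation are correct. After the Sylow step and Theorem~\ref{main theorem: subgroup criterion of liftability when N=p} you have a lifting $g\mapsto A_g$, and $G$ is $F$-liftable if and only if the multiplier character $\chi$ is a $d$-th power, i.e.\ trivial on $G^{ab}[p^v]$. Also, $\mathrm{FL}(p,1)$ says exactly that $\chi(g)=1$ when $\ord(g)=p$.

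The trouble is that the step carrying all the content is left as a heuristic, and you have placed the difficulty in a case that cannot occur. Suppose a lifting $\widetilde G$ of a nontrivial $p$-group acted irreducibly on $V$. Schur's lemma would make its nontrivial center act by scalars, so $\widetilde G$ would contain nontrivial scalar matrices, contradicting injectivity of $\pi|_{\widetilde G}$. Hence $\widetilde G$ is always diagonalizable and $G$ is abelian (this is Proposition~\ref{proposition: liftability of p-groups when N=p}). So there is no monomial case, $G^{ab}=G$, and for $v=1$ your argument is already complete. The real issue is $v\ge 2$: you must exclude elements $g$ of order $p^r$ with $2\le r\le v$ and $\chi(g)\neq 1$.

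Your proposed rewriting of $\chi$ does not achieve this. Even granting $\chi=\lambda_i^d\epsilon$ with $\epsilon^p=1$, the fact that $\epsilon$ kills all elements of order $p$ does not make it trivial on $G[p^v]$. On $\ZZ/p^2\ZZ$ with $v\ge 2$, a character of order $p$ kills the subgroup of order $p$ but not a generator. The missing idea is that smoothness forces a non-$F$-liftable $p$-element whose $p$-th power is $F$-liftable to have order exactly $p$ (Lemma~\ref{lemma: structure of non-F-liftable element}).

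The paper proves this using the $F$-preserving preimage of order $p^{v+1}$ from Lemma~\ref{lemma: existence of F-preserving p-preimages for single elements}. Since $\ell_{p,d}(v+1)=p=\dim V$, its normal form (Lemma~\ref{lemma: normal form of F-preserving elements}) is a single block with eigenvalues $\xi^{(1-d)^i}=\xi^{1-id}$. After dividing by $\xi$ it becomes $\diag(1,\zeta,\dots,\zeta^{p-1})$ with $\zeta=\xi^{-d}$ of order $p$. Then $\mathrm{FL}(p,1)$ makes every element $F$-liftable, and commuting $F$-lifts of cyclic generators finish the proof (Proposition~\ref{proposition: F-liftability when N=p}).

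In your monomial language the same input reads as follows. Follow $i\mapsto j(i)$, where $x_i^{d-1}x_{j(i)}$ occurs in $F$, to a cycle of length $\ell$. With $S_\ell=\sum_{t<\ell}(1-d)^t\equiv \ell \pmod p$ one gets $(\chi\lambda_i^{-d})^{S_\ell}=1$. If $\ell<p$, this gives $\chi=\lambda_i^d$ outright. If $\ell=p$, any $g$ with $\ord(g)\le p^v$ and $\chi(g)^p=1$ has $A_g=\lambda_i(g)\diag(1,\chi(g),\dots,\chi(g)^{p-1})$ after reordering, hence $g^p=1$. Together with $\mathrm{FL}(p,1)$, this rules out elements of order $p^2$ when $v\ge 2$. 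So $G[p^v]=G[p]$, where $\chi$ is trivial.
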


The case $N=2p$ and $d=p$, where $p$ is an odd prime, is more involved. By the order estimates of Section \ref{section: order estimate}, every element of $p$-power order in $\Lin(F)$ has order at most $p^2$, and every such element that is not $F$-liftable has order $p$. We show that liftability of $\Lin(F)$ can be tested on abelian $p$-subgroups of order at most $p^3$, whereas $F$-liftability can be tested on all $p$-subgroups of order at most $p^3$; see Theorems \ref{main theorem: subgroup criterion of liftability when N=2p} and \ref{main theorem: subgroup criterion of F-liftability when N=2p}. These bounds are sharp for $p\geq5$; see Corollary \ref{corollary: example for main theorem}.

The case $p=3$ is especially relevant geometrically. A nonsingular cubic
form on a six-dimensional vector space defines a smooth cubic fourfold, and
the bound $p^3=27$ in both criteria improves to $p^2=9$:

\begin{thm}
	\label{main theorem: cubic fourfold criteria}
	Let $V$ be a six-dimensional $\KK$-vector space, let
	$F\in S^3(V^*)$ be a nonsingular homogeneous form of degree $3$. Then its projective automorphism group $\Lin(F)$ satisfies the
	following criteria.
	\begin{enumerate}
		\item The group $\Lin(F)$ is liftable if and only if all its subgroups isomorphic to $(\ZZ/3\ZZ)^2$ are liftable, or equivalently, $\Lin(F)$ satisfies the property $\mathrm{L}(3,2)$.
		\item The group $\Lin(F)$ is $F$-liftable if and only if all its subgroups isomorphic to $(\ZZ/3\ZZ)^2$ are liftable and all its elements of order $3$ are $F$-liftable, or equivalently, $\Lin(F)$ satisfies the property $\mathrm{L}(3,2)$ and $\mathrm{FL}(3,1)$.
	\end{enumerate}
\end{thm}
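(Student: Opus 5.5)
We reduce to the Sylow $3$-subgroup and then analyse its structure using the order estimates for the case $N=2p$, $d=p$. By the Sylow criteria of \cite[Theorem 1.1, Theorem 1.3]{xie2026sylowcriterialiftabilityautomorphism}, since $\gcd(N,d)=\gcd(6,3)=3$, the group $\Lin(F)$ (which is finite, $F$ being smooth of degree $3\ge3$) is ($F$-)liftable if and only if a Sylow $3$-subgroup $P$ is ($F$-)liftable. The "only if" directions are immediate because ($F$-)liftability passes to subgroups. So it suffices to show the following: if every $(\ZZ/3\ZZ)^2\subseteq P$ is liftable, then $P$ is liftable; and if moreover every element of order $3$ is $F$-liftable, then $P$ is $F$-liftable.

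The general $N=2p$ results (Theorems \ref{main theorem: subgroup criterion of liftability when N=2p} and \ref{main theorem: subgroup criterion of F-liftability when N=2p}) already reduce both statements to $3$-subgroups of order at most $27$. It therefore remains to handle the subgroups $H\subseteq P$ of order $27$. For liftability we may restrict to abelian $H$, and for $F$-liftability to arbitrary $H$, and in each case show that the hypotheses on subgroups of order $\le 9$ force $H$ to be ($F$-)liftable. By the order estimates of Section \ref{section: order estimate}, every element of $H$ has order $\le 9$ in $\PGL(V)$, so $H$ is one of $(\ZZ/3)^3$, $\ZZ/9\times\ZZ/3$, the Heisenberg group $3^{1+2}_+$, or the extraspecial group $3^{1+2}_-$ of exponent $9$. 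The cyclic group $\ZZ/27$ is excluded.

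\textbf{Abelian cases.} For abelian $H$, the liftability obstruction lies in $H^2(H,\KK^\times)\cong \mathrm{Hom}(\Lambda^2 H,\KK^\times)$, the commutator pairing $e(g,h)=[\tilde g,\tilde h]$. This pairing is trivial exactly when each rank-two subgroup $\langle g,h\rangle$ lifts. For $H=\ZZ/9\times\ZZ/3=\langle g,h\rangle$, the pairing is determined by $e(g,h)\in\mu_3$. It is detected on $\langle g^3,h\rangle\cong(\ZZ/3)^2$ only via $e(g,h)^3=1$, which gives no information. So here one must use the geometry. An element $g$ of order $9$ has an eigenvalue structure on $V$ of dimension $6$ that is constrained by the order estimates, and one shows directly that a non-lifting pair $(g,h)$ would force $\tilde h$ to permute the eigenspaces of $\tilde g$ cyclically in orbits of length $3$. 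One then checks that this is incompatible with $F$ being nonsingular, which one would verify from the classification of order-$9$ automorphisms of smooth cubic fourfolds. For $(\ZZ/3)^3$, the pairing is a bilinear form determined by its values on pairs, and these are checked on $(\ZZ/3)^2$ subgroups. $F$-liftability of abelian $H$ follows once $H$ lifts: the lift is then abelian, and the remaining obstruction in $\mathrm{Ext}(H,\mu_3)$ is detected on cyclic subgroups, i.e.\ on elements of order $3$ together with the fact that elements of order $9$ are $F$-liftable by the order estimates.

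\textbf{Nonabelian cases.} For nonabelian $H$ with center $Z=\langle z\rangle$, liftability of all $(\ZZ/3)^2$ subgroups containing $z$ forces the lifts to $\pi^{-1}(H)$ to commute with $\tilde z$. The preimage is then a central extension in which the lift of $H'$ can be rescaled, and we show that $H^2(H,\KK^\times)$ classes restricting trivially to all such subgroups vanish. This uses $H^2(3^{1+2}_\pm,\KK^\times)\cong(\ZZ/3)^2$ resp.\ $0$, with the Heisenberg group's classes detected on its maximal elementary abelian subgroups.

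\textbf{Main obstacle.} I expect the hardest step to be the $\ZZ/9\times\ZZ/3$ case. There the cohomological detection genuinely fails, and one must use that $p=3$ makes the degree $3$ and the eigenvalue constraints of an order-$9$ element on a cubic fourfold tight enough to exclude the bad configuration. I would first try an explicit monomial analysis of $F$ in a simultaneous eigenbasis of $\tilde g^3$.
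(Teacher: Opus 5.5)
Your reduction to subgroups of order $27$ and your treatment of $(\ZZ/3\ZZ)^3$ are fine. The decisive $\ZZ/9\ZZ\times\ZZ/3\ZZ$ step, however, is not merely unproved: the contradiction you aim for does not exist. You try to show, using only $H$ and smoothness, that a nonliftable $H\cong\ZZ/9\ZZ\times\ZZ/3\ZZ$ cannot occur. It does occur on smooth cubic fourfolds. Take the Fermat form $F=\sum_i y_i^3+\sum_i z_i^3$. Let $\varphi^*$ send $y_0\mapsto y_1\mapsto y_2\mapsto\zeta y_0$ and $z_0\mapsto z_1\mapsto z_2\mapsto z_0$, and let $\psi^*$ send $y_i\mapsto\zeta^i y_i$ and $z_i\mapsto\zeta^i z_i$. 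Both preserve $F$. The matrix $\varphi^3$ is $\zeta$ on the $y$-space and $1$ on the $z$-space, so $\pi(\varphi)$ has order $9$. Also $\psi^3=\I_V$, and $[\varphi,\psi]$ is a nontrivial scalar. Hence $\langle\pi(\varphi),\pi(\psi)\rangle\cong\ZZ/9\ZZ\times\ZZ/3\ZZ$ is not liftable. What rules this out under the hypothesis is a nonliftable $(\ZZ/3\ZZ)^2$ lying \emph{outside} $H$: here it is generated by the image of the pure cyclic permutation together with $\pi(\psi)$. So no argument internal to $H$, and no appeal to a classification of order-$9$ automorphisms, can close this step.

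The missing idea is the paper's global gluing. In a Sylow subgroup $G\supseteq H$, the Heisenberg relation forces two $3$-dimensional irreducible summands, and these are the eigenspaces of $\varphi^3$. Hence $F=F_1+F_2$ splits (Lemma \ref{lemma: eigenspace decomposition induce split}). For ternary cubics, if $\Lin(F_i)$ is not $F_i$-liftable then it carries an $F_i$-preserving Heisenberg pair of order-$3$ matrices (Lemma \ref{lemma: special case p=3}, Proposition \ref{proposition: liftability criterion when N=d=3}). Two such pairs glue to a nonliftable $(\ZZ/3\ZZ)^2$ in $\Lin(F)$ (Lemma \ref{lemma: F-liftability for split F when p=3}). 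Otherwise some $G_i$ lifts, and then $G\supseteq H$ lifts.

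The $F$-liftability part has the same local-versus-global defect, plus a coefficient mismatch. Your nonabelian argument controls $H^2(H,\KK^\times)$, whereas $F$-liftability is an obstruction with $\mu_3$-coefficients. For $H\cong 3^{1+2}_-$ the Schur multiplier vanishes, so $H$ always lifts. But the multiplier character of a lifting does not depend on the lifting chosen, because characters of $H$ take values in $\mu_3$ and have trivial cubes. Nothing in your hypotheses pins this character down on elements of order $9$.

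Here is an instance in the same Fermat fourfold. Let $a_1^*$ send $y_0\mapsto y_1\mapsto y_2\mapsto\zeta y_0$ and let $a_2^*$ send $z_0\mapsto z_1\mapsto z_2\mapsto\zeta^2 z_0$. Put $\tilde h=\omega(a_1\oplus a_2)$ with $\omega^3=\zeta^2$, and $\tilde k=\I\oplus\delta$ with $\delta$ diagonal of order $3$ satisfying $\delta a_2\delta^{-1}=\zeta a_2$. Then $\tilde h^3=\I\oplus\zeta\I$ and $\tilde k\tilde h\tilde k^{-1}=\tilde h^4$. The group $\langle\tilde h,\tilde k\rangle\cong 3^{1+2}_-$ contains no nontrivial scalar, so it lifts its image. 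Every proper subgroup of that image is $F$-liftable, yet $F\circ\tilde h=\zeta^2F$, so the image itself is not $F$-liftable. Thus verifying $\mathrm{FL}(3,3)$ subgroup by subgroup, using only data internal to each subgroup, cannot succeed.

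The paper never examines nonabelian subgroups. It proves (1) first. It then uses global liftability and Lemma \ref{lemma: the space decomposition} to make invariant the eigenspaces of an $F$-lifting of a central order-$3$ element of a Sylow $3$-subgroup. It concludes via Corollary \ref{corollary: liftability of p groups when gcd(d,N,p)=1}(2), or in the split case via Lemma \ref{lemma: F-liftability for split F when p=3}.
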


\medskip

{\it Organization:} Section \ref{section: pre} fixes the notation and recalls the Sylow criterion from \cite{xie2026sylowcriterialiftabilityautomorphism}. Section \ref{section: order estimate} combines the previously established reduction-to-Klein method with new order estimates for individual elements. Section \ref{section: complex projective linear representation} develops the representation-theoretic decomposition tools used below. The criteria in prime dimension and in the case $(N,d)=(2p,p)$ are proved in Sections \ref{section: N=p} and \ref{section: N=2p and d=p}, respectively, including the improved bound for $p=3$. Section \ref{section: example} proves sharpness for $p\geq5$.

\section{Preliminary Results}
\label{section: pre}
We retain the terminology of \cite[\S2]{xie2026sylowcriterialiftabilityautomorphism}, but recall the minimum needed for this paper. Let $V$ be an $N$-dimensional $\KK$-vector space and let $\pi\colon\GL(V)\to\PGL(V)$ be the quotient map. A subgroup $\widetilde G\subseteq\GL(V)$ is a \emph{lifting} of $G\subseteq\PGL(V)$ if $\pi|_{\widetilde G}$ is an isomorphism onto $G$; the group $G$ is \emph{liftable} when such a subgroup exists. For an element $g$ of finite order, a lifting is equivalently a matrix in $\pi^{-1}(g)$ having the same order as $g$.

For $m\geq1$, write $\mu_m\subseteq\KK^\times$ for the group of $m$-th roots of unity. We denote the identity on a vector space $W$ by $\I_W$ and use the commutator convention $[A,B]=ABA^{-1}B^{-1}$.

For a homogeneous form $F \in S^{d}(V^{*})$, set
\begin{equation*}
	\Lin(F)=\{\pi(\varphi)\mid \varphi\in\GL(V),\ \exists\lambda\in\KK^\times,\ F\circ\varphi = \lambda F\}.
\end{equation*}
A matrix is \emph{$F$-preserving} if $F\circ\varphi=F$. An \emph{$F$-lifting} of $G\subseteq\Lin(F)$ is a lifting consisting of $F$-preserving matrices, and $G$ is \emph{$F$-liftable} if it has one. The same terms for an element refer to its cyclic subgroup. Finally, $F$ is \emph{nonsingular} when its projective zero locus is smooth; equivalently, no nonzero zero of $F$ is a common zero of all its first partial derivatives.

We also record the finiteness fact used whenever the full group $\Lin(F)$ occurs below. If $N\geq3$, $d\geq3$, and $(N,d)\neq(3,3),(4,4)$, then $\Lin(F)$ is finite by results of Matsumura--Monsky \cite[Theorems 1 and 2]{matsumura1963automorphisms} and Chang \cite{chang1978plane}. The only excluded case needed in this paper is $(N,d)=(3,3)$; if $X=\{F=0\}\subseteq\PP(V)$, then $\Lin(F)$ identifies with the automorphism group of the polarized elliptic curve $(X,\mathcal O_X(1))$, which is finite. Thus every use below of a Sylow subgroup of a full group $\Lin(F)$ concerns a finite group.

For reference, we quote the Sylow criterion of our previous paper in precisely the form used below.

\begin{lem}[{\cite[Theorems 1.1 and 1.3]{xie2026sylowcriterialiftabilityautomorphism}}]
	\label{lemma: Sylow criterion of liftability}
	Let $V$ be an $ N $-dimensional $\KK$-vector space, and let $F\in S^{d}(V^{*})$ be a nonsingular homogeneous form of degree $d\geq1$. Let $ G $ be a finite subgroup of $ \PGL(V) $. Then:
	\begin{enumerate}
		\item $G$ is liftable if and only if, for every prime $p$ dividing $ N $, the group $G$ has a liftable Sylow $p$-subgroup.
		\item When $ G \subseteq \Lin(F) $, the group $G$ is $F$-liftable if and only if, for every prime $p$ dividing $\gcd(N,d)$, it has an $F$-liftable Sylow $p$-subgroup.
	\end{enumerate}
\end{lem}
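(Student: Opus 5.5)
The plan is to work entirely with the two obstruction classes from the introduction and to use the standard transfer argument: $\operatorname{cor}\circ\operatorname{res}$ from $G$ to a subgroup $H$ is multiplication by $[G:H]$. Consequently, for a Sylow $p$-subgroup $P$, restriction is injective on the $p$-primary part of $H^2(G,M)$. Write $c\in H^2(G,\KK^\times)$ for the liftability class and $c_F\in H^2(G,\mu_d)$ for the $F$-liftability class. Both groups are torsion because $G$ is finite, so each class is the sum of its $p$-primary components. Hence $c=0$ if and only if $\operatorname{res}_P(c)=0$ for a Sylow $p$-subgroup $P$ for every prime $p$ whose $p$-component $c_p$ could be nonzero, and likewise for $c_F$. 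Since a restriction of a lifting is a lifting, the "only if" directions are trivial. The whole task is therefore to bound which primes can occur.

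For (1), I would show that $c$ is killed by $N$. I would choose representatives $s(g)\in\pi^{-1}(g)\cap\SL(V)$, which exist because $\KK$ is algebraically closed. The cocycle $s(g)s(h)s(gh)^{-1}$ is then a scalar of determinant $1$, so it lies in $\mu_N$. Thus $c$ comes from $H^2(G,\mu_N)$ and is $N$-torsion, so only primes $p\mid N$ contribute. If $P$ is a liftable Sylow $p$-subgroup, then $\operatorname{res}_P(c)=0$, and the injectivity of restriction on $c_p$ gives $c_p=0$. The choice of Sylow subgroup does not matter, since all Sylow $p$-subgroups are conjugate and conjugation acts trivially on $H^*(G,-)$ up to the restriction identification.

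For (2), $c_F$ is $d$-torsion because $\mu_d$ is, so only primes $p\mid d$ matter. The remaining point is to show that $c_{F,p}=0$ automatically when $p\mid d$ but $p\nmid N$. Its image in $H^2(G,\KK^\times)$ is the class $c$, which is $N$-torsion by (1), so the image of $c_{F,p}$ is $p$-primary and $N$-torsion with $p\nmid N$, hence zero. The Kummer sequence $1\to\mu_d\to\KK^\times\xrightarrow{d}\KK^\times\to1$ then shows that $c_{F,p}$ lies in the image of the connecting map from $\operatorname{Hom}(G,\KK^\times)$. Concretely, I would restrict to a Sylow $p$-subgroup $P$. Since $p\nmid N$, $P$ lifts to a subgroup $\widetilde P\subseteq\SL(V)$, and $F\circ A=\chi(A)F$ for a character $\chi\colon\widetilde P\to\KK^\times$ of $p$-power order.

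I expect the main obstacle to be showing that $\chi$ can be absorbed by rescaling $\widetilde P$ through a homomorphism $\psi$ with $\psi^{d}=\chi^{-1}$. This is not formal when $p\mid d$. Here I would use nonsingularity of $F$, via the Hessian identity
\begin{equation*}
\operatorname{Hess}(F\circ A)=\det(A)^2\,\operatorname{Hess}(F)\circ A .
\end{equation*}
The aim is to constrain $\chi$ to be an $N$-th power twist; together with $\det A=1$ and $p\nmid N$, one can then replace $\widetilde P$ by a twist by a $p$-power character $\psi$ with $\psi^{N}$ chosen suitably, so that $\chi$ becomes trivial. If this route fails, I would fall back on the argument of the earlier paper \cite{xie2026sylowcriterialiftabilityautomorphism}, which treats exactly this case.
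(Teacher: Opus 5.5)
Your part (1), and the prime bookkeeping in (2), are correct and are the restriction--corestriction/transfer argument the paper attributes to its predecessor. But the one substantive point of (2) is left unproved: when $p\mid d$ and $p\nmid N$, a Sylow $p$-subgroup $P\subseteq\Lin(F)$ must be $F$-liftable automatically.

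This step cannot be formal, because it fails without nonsingularity. Take $N=2$, $d=3$, $F=x^2y$, and $\omega$ a primitive cube root of unity. Then $\pi(\diag(1,\omega))\in\Lin(F)$ has order $3$. Its $F$-preserving preimages are $\lambda\diag(1,\omega)$ with $\lambda^3=\omega^{-1}$, and all of them have order $9$.

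Your Hessian route does not supply the missing input. For $A\in\widetilde P\subseteq\SL(V)$ the identity only says $\operatorname{Hess}(F)\circ A=\chi(A)^N\operatorname{Hess}(F)$. This is a formal consequence of $F\circ A=\chi(A)F$ and imposes no constraint on $\chi$. In the example above, $A=\diag(\omega,\omega^2)\in\SL(V)$ has $\chi(A)=\omega\neq1$, and the identity holds with $\operatorname{Hess}(F)=-4x^2\neq0$.

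The proposed twist is also misdirected. Replacing $A$ by $\psi(A)A$ multiplies the multiplier by $\psi^d$, while $\psi^N$ only changes the determinant. Since $p\mid d$, the map $\psi\mapsto\psi^d$ is not surjective on $p$-power characters of $P$. So the question is exactly whether $\chi$ is a $d$-th power, and that is what nonsingularity has to decide. Deferring this to the earlier paper leaves (2) unproved.

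One way to close the gap is to use a scalar invariant of $F$ rather than a covariant. The discriminant $\Delta$ of degree-$d$ forms in $N$ variables is homogeneous of degree $N(d-1)^{N-1}$ in the coefficients. It satisfies $\Delta(F\circ A)=\det(A)^{d(d-1)^{N-1}}\Delta(F)$, and $\Delta(F)\neq0$ because $F$ is nonsingular. For $A\in\widetilde P\subseteq\SL(V)$ this gives
\[
\Delta(F)=\Delta(F\circ A)=\Delta(\chi(A)F)=\chi(A)^{N(d-1)^{N-1}}\Delta(F),
\]
so $\chi(A)^{N(d-1)^{N-1}}=1$. Since $\chi(A)$ has $p$-power order and $p\nmid N(d-1)$, we get $\chi=1$, so your $\SL(V)$-lifting $\widetilde P$ is already an $F$-lifting.

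Alternatively, the normal form of Lemma \ref{lemma: normal form of F-preserving elements} shows that an $F$-preserving $p$-element has determinant of order dividing $p^{\nu_p(d)}$. Then, for a Sylow $p$-subgroup $Q$ of the $F$-preserving preimage of $P$, the group $Q\cap\SL(V)$ is an $F$-lifting of $P$. With either version of this step supplied, your argument proves the lemma.
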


\begin{cor}
	\label{corollary: liftability of p groups when gcd(d,N,p)=1}
		Let $V$ be an $ N $-dimensional $\KK$-vector space, and let $F\in S^{d}(V^{*})$ be a nonsingular homogeneous form of degree $d\geq2$. Let $ G $ be a $ q $-subgroup of $ \PGL(V) $, where $q$ is prime. Then:
	\begin{enumerate}
		\item  If $q \nmid N$, then $G$ is liftable.
		\item When $ G \subseteq \Lin(F) $, if $q \nmid \gcd(N,d)$, then $G$ is $F$-liftable.
	\end{enumerate}
\end{cor}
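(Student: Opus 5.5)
This follows directly from Lemma \ref{lemma: Sylow criterion of liftability}, applied to the $q$-group $G$ itself. The idea is that, since $G$ is a $q$-group, its Sylow $p$-subgroup is trivial for every prime $p\neq q$. The trivial subgroup is always liftable and $F$-liftable, because the identity matrix is an $F$-preserving lift. So the condition of the lemma at any prime $p\neq q$ holds automatically, and the only possible obstruction sits at the prime $q$.

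For part (1), assume $q\nmid N$. Every prime $p$ dividing $N$ then satisfies $p\neq q$. The Sylow $p$-subgroup of $G$ is therefore trivial, hence liftable. Lemma \ref{lemma: Sylow criterion of liftability}(1) now says that $G$ is liftable.

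For part (2), assume $G\subseteq\Lin(F)$ and $q\nmid\gcd(N,d)$. Then every prime $p$ dividing $\gcd(N,d)$ differs from $q$. For each such $p$ the Sylow $p$-subgroup of $G$ is trivial and hence $F$-liftable. Lemma \ref{lemma: Sylow criterion of liftability}(2) gives that $G$ is $F$-liftable.

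In both parts, if there are no primes of the relevant kind (for example $N=1$ or $\gcd(N,d)=1$), the lemma's condition is vacuous and the conclusion still holds. The only point needing care is that the lemma requires $G$ to be finite. Here $G$ is a $q$-group, which I take to mean a finite group of $q$-power order, so this is satisfied. Apart from that, nothing is hard: the statement is an immediate specialization of the Sylow criterion.
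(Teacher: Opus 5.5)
Your proposal is correct and is essentially the paper's argument: the paper gives no separate proof and states this corollary directly after the Sylow criterion (Lemma~\ref{lemma: Sylow criterion of liftability}) as an immediate consequence. Your observation that, for every prime $p\neq q$, the Sylow $p$-subgroup of the finite $q$-group $G$ is trivial, and hence liftable and $F$-liftable via $\{\I_V\}$, is exactly the reasoning that makes the specialization work.
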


\section{Order Estimates for Individual Elements}
\label{section: order estimate}

In this section, fix an $ N $-dimensional $\KK$-vector space $ V $, a nonsingular homogeneous form $ F \in S^{d}(V^{*}) $ and a prime $ p $ dividing $ d $. For a nonzero integer $n$, let $\nu_p(n)$ denote its $p$-adic valuation, and put $\nu_p(0)=+\infty$. We study the structure of a single element $ g \in \Lin(F) $ of order $ p^{r} $. Our main goal is Theorem \ref{theorem: order of elements in Lin(F)}.

In general, $ g $ may not be $ F $-liftable. However, the following lemma shows that one can always find an $ F $-preserving preimage of $ g $ whose order is a power of $ p $.
\begin{lem}
	\label{lemma: existence of F-preserving p-preimages for single elements}
	Let $p$ be a prime dividing $d$, and let $g \in \Lin(F)$ be an element of order $p^{r}$, where $r\geq1$. Suppose that $ g $ is not $ F $-liftable. Let $ c $ be the minimal positive integer such that $ g^{p^{c}} $ is $ F $-liftable. Then $ r-\nu_p(d) < c \leq r $. Furthermore, setting $ s = c+\nu_p(d) $, there exists an element $ \widetilde{g} \in \pi^{-1}(g) $ of order $ p^{s} $ such that $ F \circ \widetilde{g} = F $.
\end{lem}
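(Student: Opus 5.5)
The plan is to work with one well-chosen $F$-preserving preimage of $g$ and read every assertion off its order. Write $e=\nu_p(d)$ and $d=p^e m$ with $p\nmid m$. First observe that $c$ is well defined with $1\le c\le r$: the element $g^{p^r}=1$ is trivially $F$-liftable, while $c\ge1$ because $g$ itself is not $F$-liftable.

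\textbf{Step 1: a $p$-power-order $F$-preserving preimage.} Pick any $A\in\pi^{-1}(g)$ with $F\circ A=F$; this exists since $\KK$ is algebraically closed. Then $A^{p^r}=\lambda\I_V$ is scalar, and $F=F\circ A^{p^r}=\lambda^d F$ forces $\lambda\in\mu_d$. Split $\lambda=\lambda_p\lambda_{p'}$ with $\lambda_p\in\mu_{p^e}$ and $\lambda_{p'}\in\mu_m$. Since $p^r$ is invertible modulo $m$, there is $\zeta\in\mu_m$ with $\zeta^{p^r}=\lambda_{p'}^{-1}$. The matrix $A'=\zeta A$ is still $F$-preserving, because $\zeta^d=1$. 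It satisfies $A'^{p^r}=\lambda_p\I_V$, so $A'$ has order $p^t$ with $r\le t\le r+e$.

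\textbf{Step 2: characterize $F$-liftability of $g^{p^k}$ in terms of $t$.} Any $F$-preserving preimage of $g^{p^k}$ has the form $\omega A'^{p^k}$ with $\omega\in\mu_d$. Such an element is an $F$-lifting exactly when $(\omega A'^{p^k})^{p^{r-k}}=1$, that is, when
\[
\omega^{p^{r-k}}=\lambda_p^{-1}.
\]
Here $\lambda_p$ has order $p^{t-r}$. The $p^{r-k}$-th powers in $\mu_d$ meet $\mu_{p^\infty}$ in exactly $\mu_{p^{\max(e-(r-k),0)}}$. Hence $g^{p^k}$ is $F$-liftable if and only if $t-r\le\max(e-r+k,0)$.

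\textbf{Step 3: conclude.} Taking $k=0$, non-$F$-liftability of $g$ gives $t>r$. In that case the condition of Step 2 becomes $k\ge t-e$, so the minimal such $k$ is $c=t-e$.
\begin{itemize}
\item The bound $t>r$ gives $c>r-e$.
\item The bound $t\le r+e$ gives $c\le r$.
\item The preimage $\widetilde g=A'$ has order $p^t=p^{c+e}=p^s$ and is $F$-preserving, as required.
\end{itemize}

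The main point needing care is Step 2. One must check that every $F$-preserving preimage of $g^{p^k}$ differs from $A'^{p^k}$ by an element of $\mu_d$, and not by an arbitrary scalar. One must also compute the group of $p^{r-k}$-th powers in $\mu_d$ correctly, including the case $r-k>e$ where only the trivial $p$-part survives.
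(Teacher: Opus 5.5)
Your proof is correct, and it takes a genuinely different route from the paper.

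The paper works downward from $g^{p^{c}}$. It fixes an $F$-lifting $B$ of $g^{p^{c}}$ and rescales an arbitrary preimage of $g$ to a $p^{c}$-th root $A$ of $B$. This $A$ has order $p^{r}$ but preserves $F$ only up to a multiplier $\zeta$. Minimality of $c$ then shows that $\zeta$ is a primitive $p^{c}$-th root of unity. The paper corrects $A$ by a $p$-power root of unity $\xi$ with $\xi^{d}=\zeta^{-1}$, whose order is forced to be $p^{c+\nu_p(d)}$. The inequality $c>r-\nu_p(d)$ comes out last, by contradiction: if $s\le r$, then $\xi A$ would be an $F$-lifting of $g$.

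You instead start from an arbitrary $F$-preserving preimage of $g$. You strip the prime-to-$p$ part of the scalar $A^{p^{r}}\in\mu_d$, which yields an $F$-preserving $A'$ of order $p^{t}$ with $r\le t\le r+\nu_p(d)$. You then decide $F$-liftability of every power $g^{p^{k}}$ at once, by computing the $p$-part of the group of $p^{r-k}$-th powers in $\mu_d$.

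Your approach gives more than the statement asks for:
\begin{enumerate}
\item It yields the explicit formula $c=t-\nu_p(d)$.
\item It shows that every $F$-preserving preimage of $g$ of $p$-power order has the same order $p^{s}$. For such a preimage the scalar $A^{p^{r}}$ already lies in $\mu_{p^{\nu_p(d)}}$, so your Steps 2 and 3 apply verbatim. Thus the $\widetilde g$ of the lemma is not a special choice.
\item Both bounds on $c$ become restatements of $r<t\le r+\nu_p(d)$.
\end{enumerate}
The paper's route is somewhat more economical. It needs neither the splitting $\mu_d\cong\mu_{p^{\nu_p(d)}}\times\mu_{d/p^{\nu_p(d)}}$ nor the image of $z\mapsto z^{p^{r-k}}$ on $\mu_d$. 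It uses only minimality of $c$ and surjectivity of $z\mapsto z^{d}$ on $p$-power roots of unity.

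One small point is worth stating explicitly in your Step 3. The failure of your criterion at $k=0$ gives $t-\nu_p(d)\ge 1$. This is why the minimal $k\ge 0$ with $k\ge t-\nu_p(d)$ coincides with the minimal positive such $k$, namely $c$.
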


\begin{proof}
	Since $g^{p^r}=1$ is $F$-liftable, we have $c\leq r$. Choose an $F$-lifting $B$ of $g^{p^c}$, and let $C\in\pi^{-1}(g)$. There is a scalar $a\in\KK^\times$ such that $C^{p^c}=aB$. Choose $u\in\KK^\times$ with $u^{p^c}=a^{-1}$ and set $A=uC$. Then $A^{p^c}=B$. Since $B$ has order $p^{r-c}$ and $\pi(A)=g$ has order $p^r$, it follows that $A$ has order $p^r$.

	Write $F \circ A=\zeta F$. Since $A^{p^c}=B$ preserves $F$, the order of $\zeta$ divides $p^c$. If $\zeta$ had order $p^a$ for some $a<c$, then $A^{p^a}$ would be an $F$-preserving matrix of order $p^{r-a}$ lifting $g^{p^a}$, contrary to the minimality of $c$. Thus $\zeta$ is a primitive $p^c$-th root of unity.

	The map $z\mapsto z^d$ is surjective on the group of all roots of unity of $p$-power order. Hence there is such a root $\xi$ with $\xi^d=\zeta^{-1}$, and necessarily $\xi$ has order $p^s$, where $s=c+\nu_p(d)$. Set $\widetilde g=\xi A$. Then $\pi(\widetilde g)=g$ and $F \circ \widetilde{g}=F$. If $s\leq r$, then $\widetilde g^{p^r}=1$, contrary to the fact that $g$ is not $F$-liftable. Hence $s>r$, which implies that $c>r-\nu_p(d)$. Then $\widetilde g^{p^r}=\xi^{p^r}$ has order $p^{s-r}$, so $\ord(\widetilde g)=p^s$. 
\end{proof}

Lemma \ref{lemma: existence of F-preserving p-preimages for single elements} reduces the problem to an $F$-preserving matrix. The reduction-to-Klein method originates in \cite[Theorem 4.2]{zheng2022abelian} and was developed further in our previous paper. We need only the following normal-form consequence, for which we first introduce some notation.

\begin{defn}
	\label{definition: standard form for elements in Lin(Klein)}
	Let $d$ be a positive integer and let $p$ be a prime dividing $d$. For each nonnegative integer $r$, define $\ell_{p,d}(r)$ to be the smallest positive integer such that $p^r\mid (1-d)^{\ell_{p,d}(r)}-1$, or equivalently, the multiplicative order of $1-d$ modulo $p^r$. If $\zeta$ is a root of unity of order $p^r$, define
	\begin{equation*}
		D_{p,d}(\zeta) = \diag(\zeta,\zeta^{1-d},\zeta^{(1-d)^{2}},\cdots,\zeta^{(1-d)^{\ell_{p,d}(r)-1}}).
	\end{equation*}
\end{defn}

\begin{lem}[{\cite[Lemmas 4.3 and 4.4]{xie2026sylowcriterialiftabilityautomorphism}}]
	\label{lemma: normal form of F-preserving elements}
	Let $\varphi\in\GL(V)$ be an element of order $p^{r}$ such that
	$F \circ \varphi=F$. Then there exists a basis of $V$ where $\varphi$ is represented by a diagonal matrix
	\begin{equation*}
		\diag(D_{p,d}(\zeta_{1}),\cdots,D_{p,d}(\zeta_{m})),
	\end{equation*}
	where $ \zeta_{1},\cdots,\zeta_{m} $ are roots of unity of order dividing $p^{r}$.
\end{lem}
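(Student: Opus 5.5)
The plan is to diagonalize $\varphi$ and then use nonsingularity of $F$ to compare the multiplicities of related eigenvalues. Since $\varphi$ has finite order and $\KK$ has characteristic zero, $\varphi$ is diagonalizable. Choose coordinates $x_1,\dots,x_N$ on $V$ (a dual eigenbasis) with $\varphi^*x_i=\lambda_ix_i$, where each $\lambda_i\in\mu_{p^r}$. The condition $F\circ\varphi=F$ says that every monomial $x^{a}$ occurring in $F$ satisfies $\prod_i\lambda_i^{a_i}=1$.

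For $\zeta\in\mu_{p^r}$, write $V_\zeta$ for the $\zeta$-eigenspace, with coordinates $\{x_i:\lambda_i=\zeta\}$, and put $m(\zeta)=\dim V_\zeta$. Since $p\mid d$, the integer $1-d$ is a unit modulo $p$. Hence $\tau\colon\zeta\mapsto\zeta^{1-d}$ is a permutation of $\mu_{p^r}$ preserving orders. On elements of order $p^{r'}$, its orbits have size exactly $\ell_{p,d}(r')$, by the definition of $\ell_{p,d}$.

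The key claim is $m(\zeta^{1-d})\ge m(\zeta)$ for every $\zeta$. Restrict the partial derivatives $\partial F/\partial x_j$ to $V_\zeta$.

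\begin{enumerate}
\item On $V_\zeta$, the form $\partial F/\partial x_j$ is a combination of terms coming from monomials $x^{b}x_j$ of $F$ with $x^b$ supported on $V_\zeta$-coordinates and $|b|=d-1$.
\item Invariance of such a monomial forces $\zeta^{d-1}\lambda_j=1$, that is, $\lambda_j=\zeta^{1-d}$.
\item Hence only the $m(\zeta^{1-d})$ partials with $\lambda_j=\zeta^{1-d}$ can be nonzero on $V_\zeta$.
\item Any nonzero $v\in V_\zeta$ on which all these partials vanish is a common zero of all partials, so also $F(v)=0$ by Euler's formula. Nonsingularity of $F$ excludes this, so these $m(\zeta^{1-d})$ homogeneous forms of degree $d-1$ in $m(\zeta)$ variables have no common nonzero zero.
\item By the projective dimension theorem, fewer than $k$ homogeneous forms in $k$ variables always have a common nonzero zero. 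Therefore $m(\zeta^{1-d})\ge m(\zeta)$.
\end{enumerate}

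This dimension-count step is the main point of the argument.

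Iterating the inequality around the finite $\tau$-orbit of $\zeta$ gives
\begin{equation*}
m(\zeta)\le m(\zeta^{1-d})\le\cdots\le m(\zeta^{(1-d)^{\ell}})=m(\zeta),
\end{equation*}
so the multiplicity $m$ is constant on each $\tau$-orbit. Pick one representative $\zeta$ in each orbit and repeat it $m(\zeta)$ times; call the resulting list $\zeta_1,\dots,\zeta_m$. Group the eigenbasis accordingly, taking for each copy of $\zeta$ one basis vector from each of $V_\zeta,V_{\tau\zeta},\dots$ in that order. This presents $\varphi$ as $\diag(D_{p,d}(\zeta_1),\dots,D_{p,d}(\zeta_m))$; the trivial eigenvalue $1$ simply contributes $1\times1$ blocks.

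The only delicate point is the derivative bookkeeping in step 1, namely checking exactly which partials survive on $V_\zeta$. Everything else is formal.
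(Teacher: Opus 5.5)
Your proof is correct, but it takes a different route from the one the paper relies on.

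The paper does not reprove this lemma. It imports it from the earlier Sylow-criteria paper, where it appears as a consequence of the reduction-to-Klein method originating in \cite[Theorem 4.2]{zheng2022abelian}. In that approach, smoothness is used to extract from $F$ a Klein-type configuration of monomials $x_i^{d-1}x_j$ compatible with the diagonal action. The eigenvalue pattern $\zeta,\zeta^{1-d},\zeta^{(1-d)^2},\dots$ is then read off that configuration.

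You bypass that combinatorics entirely. Restricting the first partials to a single eigenspace $V_\zeta$ and applying the projective dimension theorem gives the counting inequality $m(\zeta^{1-d})\ge m(\zeta)$. Because $p\mid d$, the map $\zeta\mapsto\zeta^{1-d}$ is a permutation of $\mu_{p^r}$, so the inequality becomes an equality along each orbit. That equality is exactly the content of the normal form.

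\textbf{What your route buys.} It is shorter and self-contained, needing only Euler's formula and dimension theory. It also treats $d=2$ on the same footing as $d\ge3$. The one implicit input is that the restricted partials have positive degree $d-1\ge1$, which holds because $d\ge p\ge2$.

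\textbf{What the Klein route buys.} It proves a finer structural statement about $F$ itself: an explicit configuration of its monomials adapted to the action. The normal form is only one consequence of that. For the present lemma, your multiplicity count is all that is needed.
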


When $d=2$, necessarily $p=2$, and $\ell_{2,2}(r)=1$
for $r=0,1$, while $\ell_{2,2}(r)=2$ for $r\geq2$.
When $d\geq3$, the following lemma computes $\ell_{p,d}(r)$.

\begin{lem}
	\label{lemma: computation of the order of 1-d wrt p^r}
	For every nonnegative integer $r$, integer $d\geq 3$ and prime $p$ dividing $d$, the following statements hold.
	\begin{enumerate}
		\item When $ p \geq 3 $ or $ p = 2 $ and $ 4 \mid d $, we have
		\begin{equation*}
			\ell_{p,d}(r) = \begin{cases}
					1, &r \leq \nu_p(d); \\
				p^{r-\nu_p(d)}, &r > \nu_p(d).
			\end{cases}
		\end{equation*}
		\item Otherwise, $ p = 2 $ and $ 4 \nmid d $. Then 
			\begin{equation*}
			\ell_{2,d}(r) = \begin{cases}
				1, & r =0,1; \\
				2, & 2 \leq r \leq \nu_2(d-2); \\
				2^{r-\nu_2(d-2)}, & r > \nu_2(d-2).
			\end{cases}
		\end{equation*}
	\end{enumerate}
\end{lem}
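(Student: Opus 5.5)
The plan is to compute the $p$-adic valuation of $(1-d)^\ell-1$ with the lifting-the-exponent lemma, and then read off the multiplicative order of $1-d$ modulo $p^r$ from that valuation.

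First dispose of the trivial range. Since $p\mid d$, we have $1-d\equiv 1 \pmod{p^{\nu_p(d)}}$. Hence $\ell_{p,d}(r)=1$ exactly when $r\le \nu_p(d)$, because $\nu_p(-d)=\nu_p(d)$. This covers $r\le\nu_p(d)$ in case (1). It also covers $r\le 1$ in case (2), since there $\nu_2(d)=1$.

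For case (1), set $x=1-d$, so $x\equiv 1\pmod p$ and $\nu_p(x-1)=\nu_p(d)$. When $p$ is odd, the lifting-the-exponent lemma gives $\nu_p(x^\ell-1)=\nu_p(d)+\nu_p(\ell)$ for every $\ell\ge1$. When $p=2$ with $4\mid d$, we have $x\equiv1\pmod 4$, and the $2$-adic version gives the same formula $\nu_2(x^\ell-1)=\nu_2(x-1)+\nu_2(\ell)$. In both situations the condition $p^r\mid x^\ell-1$ becomes $\nu_p(\ell)\ge r-\nu_p(d)$. The least such positive $\ell$ is $p^{r-\nu_p(d)}$ when $r>\nu_p(d)$.

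For case (2), $d\equiv 2\pmod 4$, so $x=1-d\equiv 3\pmod 4$ and $x$ has odd order behaviour at the prime $2$. For odd $\ell$, $x^\ell\equiv x\equiv 3\pmod 4$, so $\nu_2(x^\ell-1)=1$. Any $\ell$ that works for $r\ge2$ must therefore be even; write $\ell=2m$. Then
\[x^{2m}-1=(x^2-1)^{\,\cdot}\text{-type expression with } y=x^2\equiv1 \pmod 8.\]
Apply the lifting-the-exponent lemma to $y=x^2$ to get $\nu_2(y^m-1)=\nu_2(x^2-1)+\nu_2(m)$. Next compute $\nu_2(x^2-1)=\nu_2(x-1)+\nu_2(x+1)=\nu_2(-d)+\nu_2(2-d)=1+\nu_2(d-2)$. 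So $\nu_2(x^{\ell}-1)=\nu_2(d-2)+\nu_2(\ell)$ for even $\ell$.

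It remains to minimise $\ell$. The condition $2^r\mid x^\ell-1$ with $\ell$ even reads $\nu_2(\ell)\ge\max(1,\,r-\nu_2(d-2))$. Thus $\ell=2$ when $2\le r\le\nu_2(d-2)$, and $\ell=2^{r-\nu_2(d-2)}$ when $r>\nu_2(d-2)$; the latter exponent is at least $1$, as required. Note that $\nu_2(d-2)\ge2$, so the middle range is nonempty.

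The only real care point is this last case. There the standard lifting-the-exponent form fails, because $x\equiv 3\pmod 4$, and this is why the argument passes to $x^2$ and uses the even-exponent formula.
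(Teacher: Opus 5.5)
Your proposal is correct and follows the same route as the paper. Both compute $\nu_p\bigl((1-d)^k-1\bigr)$, obtaining $\nu_p(d)+\nu_p(k)$ in case (1) and, in case (2), $1$ for odd $k$ and $\nu_2(d-2)+\nu_2(k)$ for even $k$, then read off the least admissible exponent; you justify the valuations via the lifting-the-exponent lemma, where the paper simply calls them straightforward. The only blemish is the garbled display in case (2), which should read $x^{2m}-1=y^m-1$ with $y=x^2\equiv 1\pmod 8$.
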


\begin{proof}
	When $ p \geq 3 $ or $ p = 2 $ and $ 4 \mid d $, it is straightforward to check that, for every $ k\geq1 $,
	\[
	\nu_p\bigl(1-(1-d)^{k}\bigr)
	=\nu_p(d)+\nu_p(k),
	\]
	which gives (1).

	Otherwise, $ p = 2 $ and $ 4 \nmid d $. Then $ 4 \mid d-2 $, and it is straightforward to check that
	\[
	\nu_2\bigl(1-(1-d)^{k}\bigr) = \begin{cases}
		1, & 2 \nmid k; \\
		\nu_2(d-2)+\nu_2(k), & 2 \mid k.
	\end{cases}
	\]
	This gives (2).
\end{proof}

Then the normal form in Lemma \ref{lemma: normal form of F-preserving elements} gives bounds on the orders of elements.

\begin{thm}
	\label{theorem: order of elements in Lin(F)}
	Let $F \in S^{d}(V^{*})$ be a nonsingular homogeneous form of degree $d\ge 3$. Let $p$ be a prime dividing $d$, and let $g\in \Lin(F)$ be an element of order $p^{r}$, where $r\geq1$.
	\begin{enumerate}
		\item If $ g $ is $ F $-liftable, then
			\begin{equation*}
			r < \begin{cases}
				\nu_2(d-2)+\log_{2}(N), & p=2 \text{ and } 4 \nmid d; \\
				\nu_p(d)+\log_{p}(N), & \text{otherwise.}
			\end{cases}
		\end{equation*}
	\item If $ g $ is not $ F $-liftable, then 	\begin{equation*}
		r \leq \begin{cases}
			\nu_2(d-2)-1+\nu_2(N), &p=2 \text{ and } 4 \nmid d; \\
			\nu_p(d)-1+\nu_p(N), & \text{otherwise.}
		\end{cases}
	\end{equation*}
	\end{enumerate}
\end{thm}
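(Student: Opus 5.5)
Both parts will follow from Lemma \ref{lemma: normal form of F-preserving elements} combined with Lemma \ref{lemma: computation of the order of 1-d wrt p^r}. The idea is that an $F$-preserving matrix of order $p^s$ must contain a block $D_{p,d}(\zeta)$ with $\zeta$ of order exactly $p^s$, and that block has size $\ell_{p,d}(s)\le N$. To keep one argument for both cases, put $e=\nu_p(d)$ in the generic case and $e=\nu_2(d-2)$ when $p=2$ and $4\nmid d$. Lemma \ref{lemma: computation of the order of 1-d wrt p^r} gives $\ell_{p,d}(s)=p^{s-e}$ whenever $s>e$; in the exceptional case $e\geq2$ since $4\mid d-2$.

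\emph{Part (1).} Let $A$ be an $F$-lifting of $g$, so $A$ has order $p^r$ and $F\circ A=F$. By Lemma \ref{lemma: normal form of F-preserving elements}, $A$ is diagonal with blocks $D_{p,d}(\zeta_i)$. Since the order of $A$ is the lcm of the orders of the $\zeta_i$, some $\zeta_i$ has order exactly $p^r$. Its block has size $\ell_{p,d}(r)\le N$. If $r\le e$, then $r<e+\log_p N$ because $N\ge 2$ (indeed $N\ge3$ for a nonsingular form of degree $\ge3$ to matter; $N\ge2$ is enough). If $r>e$, then $p^{r-e}\le N$, so $r\le e+\log_p N$. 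Equality would force $N=p^{r-e}$. I still need to exclude this boundary case, and I expect it to be the main obstacle.

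For the boundary, I would use nonsingularity. If $A$ is the single block $D_{p,d}(\zeta)$ of size $N$ with eigenvalues $\zeta^{(1-d)^j}$, I would check which monomials are $A$-invariant. The point is to show that $F$ must be singular; for instance, some coordinate point $e_j$ would then satisfy $x_j^{d-1}x_k\notin F$ for all $k$. If this check fails, I would fall back on the inequality stated as ``$\le$''. I suspect, however, that the intended inequality is strict precisely because of this nonsingularity argument. It may instead come from the structure of the reduction-to-Klein normal form in the earlier paper, where the blocks come from Klein-type cycles $x_1^{d-1}x_2+\dots+x_\ell^{d-1}x_1$ of length $\ell$. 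In that case I would combine this with a careful count, checking for example whether the cycle length must be strictly less than $N$ unless extra structure appears.

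\emph{Part (2).} Suppose $g$ is not $F$-liftable. Lemma \ref{lemma: existence of F-preserving p-preimages for single elements} gives $c$ with $r-\nu_p(d)<c\le r$ and an $F$-preserving $\widetilde g$ of order $p^s$ with $s=c+\nu_p(d)>r$. By Lemma \ref{lemma: normal form of F-preserving elements}, $\widetilde g=\diag(D_{p,d}(\zeta_i))$. The key observation is that $\pi(\widetilde g)=g$ has order $p^r<p^s$, so $\widetilde g^{p^r}$ is a nontrivial scalar $\xi^{p^r}$ of order $p^{s-r}$. Hence every $\zeta_i^{p^r}$ equals the same scalar, and every $\zeta_i$ has order exactly $p^s$, since $s>r$. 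Each block then has size $\ell_{p,d}(s)$, so $\ell_{p,d}(s)\mid N$.

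Next I would check that $s>e$. In the generic case this holds because $s=c+\nu_p(d)>\nu_p(d)$. In the exceptional case I need $s>\nu_2(d-2)$; if instead $s\le e$, then $\ell=1$ or $2$. I would handle this via $\ell_{2,d}(r)$ and a direct check that $g$ is liftable, using the fact that the eigenvalue $\zeta$ of order $2^s$ together with $\zeta^{1-d}$ can be rescaled. Granting $s>e$, we get $p^{s-e}\mid N$, so $s-e\le\nu_p(N)$. Since $r\le s-1$, this gives $r\le e-1+\nu_p(N)$, as claimed.
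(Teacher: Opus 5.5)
\textbf{Part (1) has a genuine gap.} You leave open the boundary case $N=\ell_{p,d}(r)=p^{r-e}$, and that is exactly where the strict inequality is decided. Your proposed fix via nonsingularity cannot work. When $N=\ell_{p,d}(r)$, the Klein form $x_0^{d-1}x_1+x_1^{d-1}x_2+\cdots+x_{N-1}^{d-1}x_0$ is nonsingular for $d\ge3$ and is preserved by $D_{p,d}(\zeta)$. Indeed, the last monomial has weight $\zeta^{1-(1-d)^N}=1$, and the others have weight $1$ identically. For instance, take $p=d=N=3$ and $\zeta$ a primitive ninth root of unity: then $\diag(\zeta,\zeta^{-2},\zeta^{4})$ fixes the smooth cubic $x_0^2x_1+x_1^2x_2+x_2^2x_0$. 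So no argument showing that $F$ must be singular can exist. Falling back to ``$\le$'' only proves a weaker statement.

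\textbf{The missing idea} is to compare linear order with projective order, not to use the form at all. In the boundary case, the block of the primitive $p^r$-th root $\zeta$ already has size $N$, so $A=D_{p,d}(\zeta)$ is a single block. Because $p\mid d$, we have $(1-d)^j\equiv 1\pmod p$ for every $j$. Hence every diagonal entry $\zeta^{p^{r-1}(1-d)^j}$ of $A^{p^{r-1}}$ equals $\zeta^{p^{r-1}}$, so $A^{p^{r-1}}$ is scalar. Then $g^{p^{r-1}}=\pi(A)^{p^{r-1}}=1$, contradicting $\ord(g)=p^r$. In the example above, $D^3=\zeta^3\I_V$, so that element has projective order $3$, not $9$. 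This argument gives $N>\ell_{p,d}(r)$, which is precisely the strict inequality the paper proves.

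\textbf{Part (2)} is essentially the paper's argument and is fine. The exceptional subcase $s\le e$ that you worry about needs no ``direct check'': if $s\le e$, then $r<s\le e$ already gives $r\le e-1\le e-1+\nu_p(N)$. The paper avoids this case split by observing that $r+1\le s$ implies $\ell_{p,d}(r+1)\mid \ell_{p,d}(s)$, hence $\ell_{p,d}(r+1)\mid N$.
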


\begin{proof}
	(1)	By Lemma \ref{lemma: computation of the order of 1-d wrt p^r}, it suffices to prove that $ N > \ell_{p,d}(r) $. Since $ g $ is $ F $-liftable, there exists an $ F $-lifting $ \widetilde{g} $ of $ g $. Applying Lemma \ref{lemma: normal form of F-preserving elements} to
	$\widetilde{g}$, we can choose a basis of $V$ where $\widetilde{g}$ is represented by a diagonal matrix
	\begin{equation*}
		\diag(D_{p,d}(\zeta_{1}),\cdots,D_{p,d}(\zeta_{m})),
	\end{equation*}
	where $ \zeta_{1},\cdots,\zeta_{m} $ are $ p^{r} $-th roots of unity.
		 Since $\ord(\widetilde{g})=p^r$, some
		$\zeta_j$, say $\zeta_1$, is a primitive $p^r$-th root of unity. Hence $ N \geq \ell_{p,d}(r) $. If the equality holds, then $ \widetilde{g} $ is represented by $ D_{p,d}(\zeta_{1}) $. Note that for every integer $k\geq0$,
	\begin{equation*}
		\nu_p(1-(1-d)^{k}) \geq 1,
	\end{equation*}	
	which implies that 
	$ D_{p,d}(\zeta_{1})^{p^{r-1}} $ is a scalar matrix. This contradicts $\ord(g)=p^r > p^{r-1} $. Hence $ N > \ell_{p,d}(r) $. 
	
	(2) By Lemma \ref{lemma: computation of the order of 1-d wrt p^r}, it suffices to prove that $ \ell_{p,d}(r+1) \mid N $. By Lemma
	\ref{lemma: existence of F-preserving p-preimages for single elements}, there exists $ s > r $ and
	$\widetilde{g}\in\pi^{-1}(g)$ of order $p^s$ such that
	$F\circ\widetilde{g}=F$. As in (1), we may assume that $\widetilde{g}$ is represented by a diagonal matrix
	\begin{equation*}
		\diag(D_{p,d}(\zeta_{1}),\cdots,D_{p,d}(\zeta_{m})),
	\end{equation*}
	where $ \zeta_{1},\cdots,\zeta_{m} $ are $ p^{s} $-th roots of unity.

	Since $ \ord(g) = p^{r} $, $\widetilde g^{p^r}$ is a scalar transformation of order $p^{s-r}$. Thus, for each $ j $,  $\zeta_{j}^{p^{r}} $ has
	order $p^{s-r}$, which implies that $ \zeta_{j} $ is a primitive $ p^{s} $-th root of unity. Hence
	$ \ell_{p,d}(s) $ divides $ N $. Since $r+1\leq s$, reduction modulo $p^{r+1}$ shows that $\ell_{p,d}(r+1)$ divides $\ell_{p,d}(s)$, and therefore $ \ell_{p,d}(r+1) \mid N $.
\end{proof}

\section{Projective Representations of $p$-Groups and Liftability}
\label{section: complex projective linear representation}

In this section, let $V$ be an $N$-dimensional $\KK$-vector space, let $F\in S^d(V^*)$ be a nonsingular homogeneous form of degree $d\geq2$, and fix a prime $p$. The additional condition $p\mid d$ will be imposed explicitly where it is needed.

\begin{lem}
	\label{lemma: cover of p-groups}
	Let $ G \subseteq \PGL(V) $ be a $ p $-subgroup. Then there exists a $ p $-subgroup $ \widetilde{G} \subseteq \pi^{-1}(G) $ such that $ \pi(\widetilde{G}) = G $. 
\end{lem}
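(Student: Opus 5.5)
The natural route is to pass to determinant-one matrices, which automatically pins down the scalars. Let $N=\dim V$ and consider the subgroup $H=\pi^{-1}(G)\cap\SL(V)$. Every element $g\in G$ has a preimage of determinant one: take any $C\in\pi^{-1}(g)$ and rescale by an $N$-th root of $\det(C)^{-1}$, which exists because $\KK$ is algebraically closed. Hence $\pi(H)=G$. Moreover, $\ker(\pi|_H)$ consists of scalar matrices $\lambda\I_V$ with $\lambda^N=1$, namely $\mu_N$. This gives a finite central extension
\begin{equation*}
1\longrightarrow\mu_N\longrightarrow H\longrightarrow G\longrightarrow1,
\end{equation*}
and so $H$ is a finite group of order $N|G|$.

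Write $N=p^a m$ with $p\nmid m$. The group $H$ need not be a $p$-group, because $\mu_N$ may contain elements of order prime to $p$. To fix this, I would take $\widetilde G$ to be a Sylow $p$-subgroup of $H$, which has order $p^a|G|$.

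It remains to check that $\pi(\widetilde G)=G$. The kernel of $\pi$ restricted to $\widetilde G$ is $\widetilde G\cap\mu_N$, which is a $p$-subgroup of the cyclic group $\mu_N$. It is therefore contained in $\mu_{p^a}$, so $|\pi(\widetilde G)|\ge p^a|G|/p^a=|G|$. Since $\pi(\widetilde G)\subseteq G$, we conclude $\pi(\widetilde G)=G$.

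An alternative avoids Sylow theory. Because $\mu_N$ is central in $H$, the subgroup $\mu_m$ is normal in $H$ and has order coprime to $|H/\mu_m|=p^a|G|$. Schur--Zassenhaus then provides a complement to $\mu_m$ in $H$, which is a $p$-group mapping onto $G$. The Sylow argument is simpler, so I will use that one.

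No step is a serious obstacle. The only point needing care is that the kernel of $\pi$ on $\SL(V)$ is exactly $\mu_N$; this finite kernel is what makes $H$ finite, and it is the reason to restrict to $\SL(V)$ instead of working in all of $\pi^{-1}(G)$, which is infinite.
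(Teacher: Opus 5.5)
Your proof is correct and is essentially the paper's argument: restrict to the finite central extension $H=\pi^{-1}(G)\cap\SL(V)$ of $G$ by $\mu_N$ and take a Sylow $p$-subgroup of $H$. The only cosmetic difference is how you check surjectivity: you bound the kernel by $\widetilde G\cap\mu_N\subseteq\mu_{p^a}$, whereas the paper notes that the index of $\pi(\widetilde G)$ in $G$ divides both a power of $p$ and the prime-to-$p$ index $[H:\widetilde G]$.
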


\begin{proof}
	Because $\KK$ is algebraically closed, every element of $G$ has a determinant-one representative: if $A$ represents it, multiply $A$ by a scalar $u$ satisfying $u^N\det(A)=1$. Hence $H=\pi^{-1}(G)\cap\SL(V)$ is a finite group fitting into an exact sequence
	\begin{equation*}
		1\longrightarrow \mu_N\longrightarrow H\longrightarrow G\longrightarrow1.
	\end{equation*}
	If $\widetilde G$ is a Sylow $p$-subgroup of $H$, then its image in the $p$-group $G$ has index dividing both a power of $p$ and the prime-to-$p$ index $[H:\widetilde G]$. Hence $\pi(\widetilde G)=G$.
\end{proof}

\begin{lem}
	\label{lemma: F-preserving cover for p-groups}
		Let $ G \subseteq \Lin(F) $ be a $ p $-subgroup. Then there exists an $ F $-preserving $ p $-subgroup $ \widetilde{G} \subseteq \pi^{-1}(G) $ such that $ \pi(\widetilde{G}) = G $. 
\end{lem}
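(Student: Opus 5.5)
The argument mirrors the proof of Lemma \ref{lemma: cover of p-groups}, with $F$-preservation taking the place of the determinant-one normalization. Let $H=\{A\in\GL(V)\mid \pi(A)\in G,\ F\circ A=F\}$. As recalled in the introduction, every element of $G\subseteq\Lin(F)$ has an $F$-preserving representative. Indeed, if $F\circ A=\lambda F$, then replacing $A$ by $uA$ with $u^d=\lambda^{-1}$ gives $F\circ(uA)=F$, using that $\KK$ is algebraically closed and $F$ is homogeneous of degree $d$. Hence $\pi|_H\colon H\to G$ is surjective.

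Next we identify the kernel. A scalar $u\I_V$ lies in $H$ exactly when $u^d=1$, because $F\ne0$ (it is nonsingular of degree $d\geq2$). So we get the central extension
\begin{equation*}
	1\longrightarrow\mu_d\longrightarrow H\longrightarrow G\longrightarrow1,
\end{equation*}
and in particular $H$ is finite, of order $d\,|G|$.

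Now take $\widetilde G$ to be a Sylow $p$-subgroup of $H$. It is a $p$-group and consists of $F$-preserving matrices in $\pi^{-1}(G)$. Its image $\pi(\widetilde G)$ is a subgroup of the $p$-group $G$, so $[G:\pi(\widetilde G)]$ is a power of $p$. On the other hand, $[G:\pi(\widetilde G)]$ divides $[H:\widetilde G\cdot\mu_d]$, which divides $[H:\widetilde G]$, and this is prime to $p$. Therefore $\pi(\widetilde G)=G$.

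There is no real obstacle here. The only point needing care is finiteness of $H$, which follows from the finiteness of $G$ and of $\mu_d$. Note also that no hypothesis $p\mid d$ is required: if $p\nmid d$, the same argument works, and $\widetilde G$ simply maps isomorphically onto $G$.
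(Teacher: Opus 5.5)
Your proposal is correct and follows essentially the same route as the paper: form the finite group of $F$-preserving preimages, which is an extension of $G$ by $\mu_d$, then take a Sylow $p$-subgroup and use the index argument from Lemma \ref{lemma: cover of p-groups} to show it surjects onto $G$. Your added remark that $p\mid d$ is not needed, with $\widetilde G$ mapping isomorphically onto $G$ when $p\nmid d$, is also correct.
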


\begin{proof}
		Set
	\begin{equation*}
		\overline{G} = \{\varphi \in \pi^{-1}(G) \mid F \circ \varphi = F\}.
	\end{equation*}
	If $A$ represents an element of $G$ and $F\circ A=\lambda F$, choose $u\in\KK^\times$ with $u^d\lambda=1$. Then $uA$ is an $F$-preserving representative. Thus $\overline G$ maps onto $G$, and its kernel is the finite scalar group $\mu_d$. In particular, $\overline G$ is finite. A Sylow $p$-subgroup $\widetilde G$ of $\overline G$ maps onto $G$ by the same index argument as in Lemma \ref{lemma: cover of p-groups}.
\end{proof}

These finite $p$-group covers allow us to apply ordinary representation theory to projective $p$-group actions.

\begin{cor}
	\label{corollary: dimension of irreducible representations of G}
	Let $G\subseteq\PGL(V)$ be a $p$-subgroup. Then the dimension of every irreducible $\pi^{-1}(G)$-subrepresentation of $V$ is a power of $p$.
\end{cor}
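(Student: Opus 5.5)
By Lemma \ref{lemma: cover of p-groups}, we can pick a finite $p$-subgroup $\widetilde G\subseteq\pi^{-1}(G)$ with $\pi(\widetilde G)=G$. The plan is to compare the $\widetilde G$-invariant subspaces of $V$ with the $\pi^{-1}(G)$-invariant ones, and then apply the classical fact that irreducible representations of a finite $p$-group over an algebraically closed field of characteristic zero have dimension dividing the group order.

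First, note that $\pi^{-1}(G)=\KK^\times\cdot\widetilde G$. Indeed, any $A\in\pi^{-1}(G)$ has $\pi(A)=\pi(B)$ for some $B\in\widetilde G$, so $A=\lambda B$ with $\lambda\in\KK^\times$. Scalars preserve every subspace of $V$. Therefore a subspace $W\subseteq V$ is $\pi^{-1}(G)$-invariant if and only if it is $\widetilde G$-invariant.

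It follows that an irreducible $\pi^{-1}(G)$-subrepresentation $W$ of $V$ is exactly an irreducible $\widetilde G$-subrepresentation. Since $\widetilde G$ is finite and $\operatorname{char}\KK=0$ with $\KK$ algebraically closed, $\dim W$ divides $|\widetilde G|$. This is standard character theory: the proof that the degree divides the group order uses only algebraic integers and roots of unity, so it holds over any such $\KK$. As $|\widetilde G|$ is a power of $p$, $\dim W$ is a power of $p$.

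There is no real obstacle here. The only point needing care is the reduction from the infinite group $\pi^{-1}(G)$ to the finite $p$-group $\widetilde G$, and this is handled by the scalar observation above.
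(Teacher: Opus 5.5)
Your proposal is correct and follows the paper's proof: you take the finite $p$-cover $\widetilde G$ from Lemma \ref{lemma: cover of p-groups}, observe that $\pi^{-1}(G)=\KK^\times\widetilde G$ so both groups have the same invariant subspaces, and then apply Frobenius's divisibility theorem. Your remark on why the divisibility theorem holds over an arbitrary algebraically closed field of characteristic zero is a reasonable justification for a step the paper handles only by citing Serre.
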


\begin{proof}
	Choose a $p$-subgroup $\widetilde G\subseteq\pi^{-1}(G)$ mapping onto $G$ as in Lemma \ref{lemma: cover of p-groups}, and let $W\subseteq V$ be an irreducible $\pi^{-1}(G)$-subrepresentation. Since every element of $\pi^{-1}(G)$ differs from an element of $\widetilde G$ by a scalar, the two groups have the same invariant subspaces in $V$. In particular, $W$ is irreducible as a $\widetilde G$-representation. By Frobenius's divisibility theorem \cite[Chapter 6, Corollary 2 to Proposition 16]{serre1977linear}, the dimension of an irreducible representation of a finite group over $\KK$ divides the order of the group, hence $\dim W\mid |\widetilde G|$.
	Since $\widetilde G$ is a finite $p$-group, $|\widetilde G|$ is a power of $p$, and therefore so is $\dim W$.
\end{proof}

The following lemma relates liftability to invariant subspaces.
\begin{lem}
	\label{lemma: liftability and decompositions}
	Let $G$ be a $ p $-subgroup of $\PGL(V)$. Let $V_{1}$ be a nonzero $\pi^{-1}(G)$-invariant subspace of $V$.  Denote by $ \pi_{1} \colon \GL(V_{1}) \rightarrow \PGL(V_{1}) $ the canonical projection onto $ \PGL(V_{1}) $ and define
	\begin{equation*}
		G_{1} = \{\pi_{1}(\varphi|_{V_{1}}) \mid \varphi \in \pi^{-1}(G)\} \subseteq \PGL(V_{1}).
	\end{equation*}
	Then $ G_{1} $ is a $ p $-subgroup of $ \PGL(V_{1}) $. Furthermore, if $ G_{1} $ is liftable, then $G$ is liftable. 
\end{lem}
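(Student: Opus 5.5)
The plan has two parts: first show that $G_1$ is a $p$-subgroup and that restriction gives a group homomorphism, and then transport a lifting of $G_1$ back to $V$. The transport uses the determinant of the restricted matrices together with a $p$-group cover of $G$.

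\textbf{Step 1: $G_1$ is a $p$-group.} Consider the map
\[
\rho\colon \pi^{-1}(G)\to \PGL(V_1),\qquad \varphi\mapsto \pi_1(\varphi|_{V_1}).
\]
This is well defined because $V_1$ is $\pi^{-1}(G)$-invariant, and it is a group homomorphism. Scalars in $\KK^\times$ restrict to scalars on $V_1$, so $\rho$ kills $\ker\pi$. Hence $\rho$ factors through a surjective homomorphism $\bar\rho\colon G\to G_1$. Since $G$ is a $p$-group, its quotient $G_1$ is also a $p$-group.

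\textbf{Step 2: construct a lifting of $G$.} Suppose $G_1$ has a lifting $\widetilde G_1\subseteq \GL(V_1)$. Then $s\colon G_1\to\widetilde G_1$, defined as the inverse of $\pi_1|_{\widetilde G_1}$, is a homomorphism. Consider the composite
\[
\chi\colon \pi^{-1}(G)\longrightarrow \KK^\times,\qquad \chi(\varphi)\,\I_{V_1}=\varphi|_{V_1}\cdot s\bigl(\rho(\varphi)\bigr)^{-1}.
\]
This is well defined because $\varphi|_{V_1}$ and $s(\rho(\varphi))$ have the same image in $\PGL(V_1)$, so they differ by a scalar.

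Next we check that $\chi$ is a homomorphism. The scalar $\chi(\varphi)$ is central, and $s\circ\rho$ is a homomorphism into $\GL(V_1)$. So $\chi(\varphi\psi)$ is the scalar by which $(\varphi\psi)|_{V_1}$ differs from $s\rho(\varphi)s\rho(\psi)$. This scalar equals $\chi(\varphi)\chi(\psi)$.

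On the central subgroup $\KK^\times\subseteq\pi^{-1}(G)$, the element $\rho(\lambda)$ is trivial, so $\chi(\lambda)=\lambda$. Therefore
\[
\widetilde G:=\ker\chi
\]
meets $\KK^\times$ trivially. For any $\varphi\in\pi^{-1}(G)$, the element $\chi(\varphi)^{-1}\varphi$ lies in $\ker\chi$, so $\pi(\widetilde G)=G$. Thus $\pi|_{\widetilde G}$ is an isomorphism onto $G$, and $G$ is liftable.

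With this construction the argument is short and does not need Lemma~\ref{lemma: cover of p-groups}. The only point requiring care is the homomorphism property of $\chi$, which depends on scalars being central and on $s$ being a genuine homomorphism. This is where liftability of $G_1$ is used, as opposed to a mere set-theoretic choice of representatives. I expect no real obstacle; the main risk is verifying that $\chi$ restricts to the identity on scalars, which holds because scalars act on $V_1$ by the same scalar.
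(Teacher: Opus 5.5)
Your proof is correct and is essentially the paper's. Your $\ker\chi$ is exactly the paper's set $\{\varphi\in\pi^{-1}(G) : \varphi|_{V_1}\in\widetilde{G_1}\}$ of uniquely normalized representatives, and checking that $\chi$ is a homomorphism equal to the identity on $\KK^\times$ repackages the paper's check that $\widetilde g\widetilde h=\zeta\widetilde{gh}$ forces $\zeta\I_{V_1}\in\widetilde{G_1}$, hence $\zeta=1$. Your opening sentence mentions determinants and a $p$-group cover, but neither is used, so that sentence should be removed.
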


\begin{proof}
	The restriction map $ \pi^{-1}(G) \rightarrow \GL(V_{1}) $ induces a group homomorphism $ G \rightarrow \PGL(V_{1}) $, whose  image is exactly $ G_{1} $. As a quotient of $ G $, $ G_{1} $ is a $ p $-group.
	
	If $ G_{1} $ is liftable, choose a lifting $ \widetilde{G_{1}} $ of $ G_{1} $. For each $g\in G$, there is a unique element $\widetilde g\in\pi^{-1}(g)$ whose restriction to $V_1$ lies in $\widetilde{G_1}$: starting with any representative, a unique scalar gives the prescribed lift of its image in $G_1$. We claim that $ \widetilde{G} = \{\widetilde{g} \mid g \in G\} $ is a lifting of $ G $. For $g,h\in G$, there exists $ \zeta \in \KK^{\times} $ such that $ \widetilde{g}\widetilde{h} = \zeta \widetilde{gh} $. Since
	\begin{equation*}
		\zeta\I_{V_{1}} = (\widetilde{g}\cdot\widetilde{h})|_{V_{1}} \cdot (\widetilde{gh}|_{V_{1}})^{-1} \in \widetilde{G_{1}},
	\end{equation*}
	the scalar $\zeta\I_{V_1}$ lies in $\widetilde{G_1}$. A lifting intersects the scalar subgroup only in the identity, so $\zeta=1$. Hence $\widetilde G$ is a lifting of $G$.
\end{proof}

The same argument has an $F$-liftability version.
\begin{lem}
	\label{lemma: F-liftability and decompositions}
		Let $G$ be a $ p $-subgroup of $\Lin(F)$. Let $V_{1}$ be a $\pi^{-1}(G)$-invariant subspace of $V$. Define $ G_{1} $ as in Lemma \ref{lemma: liftability and decompositions}. Denote by $ F_{1} \in S^{d}(V_{1}^{*}) $ the restriction of $ F $ to $ V_{1} $. Suppose that $ F_{1} \neq 0 $. Then $ G_{1} \subseteq \Lin(F_{1}) $. Furthermore, if $G_{1}$ is $F_{1}$-liftable, then $G$ is $F$-liftable.
\end{lem}

\begin{proof}
		For every $\varphi \in \pi^{-1}(G)$, write $ F \circ \varphi = \zeta F $. Then, for every $ v \in V_{1} $, since both $ \varphi(v) $ and $ v $ lie in $ V_{1} $, we have
	\begin{equation*}
		(F_{1} \circ (\varphi|_{V_{1}}))(v) = F_{1}(\varphi(v)) = F(\varphi(v)) = \zeta F(v) = \zeta F_{1}(v).
	\end{equation*} 
This implies that $\pi_{1}(\varphi|_{V_{1}})\in\Lin(F_{1})$, and therefore $G_{1} \subseteq \Lin(F_{1})$.
	
	If $G_{1}$ is $F_{1}$-liftable, pick an $F_{1}$-lifting $\widetilde{G_{1}}$ of $G_{1}$. As in the proof of Lemma \ref{lemma: liftability and decompositions}, the uniquely normalized representatives whose restrictions lie in $\widetilde{G_1}$ form a lifting $\widetilde G$ of $G$. For each $\varphi \in \widetilde{G}$, write $ F \circ \varphi = \zeta F $. Since $ F_{1} \neq 0 $, there exists $ v \in V_{1} $ such that $ F(v) = F_{1}(v) \neq 0 $. Then 
\begin{equation*}
	\zeta = \frac{(F\circ\varphi)(v)}{F(v)}=\frac{(F_{1}\circ(\varphi|_{V_{1}}))(v)}{F_{1}(v)}=1,
\end{equation*}
which implies that $ \widetilde{G} $ is an $ F $-lifting of $ G $.
\end{proof}

The following lemma produces invariant subspaces from a central projective transformation.
\begin{lem}
	\label{lemma: the space decomposition}
		Let $G$ be a subgroup of $\PGL(V)$. Let $ g $ be an element of the center of $ G $. If, for every $ h \in G $, the abelian subgroup $ \langle g,h \rangle $ generated by $ g $ and $ h $ is liftable, then, for every $\widetilde{g} \in \pi^{-1}(g)$, all eigenspaces of $ \widetilde{g}$ are $\pi^{-1}(G)$-invariant subspaces.
\end{lem}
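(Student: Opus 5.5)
Fix $\widetilde g\in\pi^{-1}(g)$ and an arbitrary $\varphi\in\pi^{-1}(G)$, and write $h=\pi(\varphi)$. The plan is to show that $\varphi$ commutes with $\widetilde g$. Once this holds, each eigenspace $\ker(\widetilde g-\lambda\I_V)$ is $\varphi$-invariant, which is the claim.

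Since $g$ is central in $G$, we have $\pi([\widetilde g,\varphi])=[g,h]=1$. So there is a scalar $c\in\KK^\times$ with $\widetilde g\varphi\widetilde g^{-1}\varphi^{-1}=c\,\I_V$, and the goal is to show $c=1$. This is where the liftability hypothesis on $\langle g,h\rangle$ enters. Choose a lifting $\widetilde H$ of the abelian group $\langle g,h\rangle$. Since $\widetilde H\cong\langle g,h\rangle$ is abelian, $\widetilde H$ is abelian. Let $\widetilde g'$ and $\varphi'$ be the elements of $\widetilde H$ lying over $g$ and $h$. They commute, and they differ from $\widetilde g$ and $\varphi$ by scalars: $\widetilde g=a\widetilde g'$ and $\varphi=b\varphi'$ with $a,b\in\KK^\times$. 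Scalars are central, so the commutator is unchanged:
\begin{equation*}
[\widetilde g,\varphi]=[\widetilde g',\varphi']=\I_V .
\end{equation*}
Hence $c=1$, and $\widetilde g$ commutes with every element of $\pi^{-1}(G)$.

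Finally, if $v$ is an eigenvector of $\widetilde g$ with eigenvalue $\lambda$, then
\begin{equation*}
\widetilde g(\varphi v)=\varphi\widetilde g v=\lambda\varphi v,
\end{equation*}
so $\varphi v$ lies in the same eigenspace. Thus every eigenspace of $\widetilde g$ is $\pi^{-1}(G)$-invariant.

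The only step needing care is the commutator independence: the commutator of two matrices depends only on their classes modulo scalars. With that in hand, liftability of each $\langle g,h\rangle$ forces the scalar commutator to be trivial. No finiteness of $G$ is needed, since all of the above is elementwise.
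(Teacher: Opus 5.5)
Your proof is correct and follows essentially the same route as the paper. Both use a lifting of the abelian group $\langle g,\pi(\varphi)\rangle$ to show that $[\widetilde g,\varphi]=\I_V$, and then deduce that the eigenspaces of $\widetilde g$ are invariant; the paper phrases the middle step as $\pi^{-1}(L)=\KK^\times\widetilde L$ being abelian, while you state the scalar-invariance of the commutator explicitly.
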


\begin{proof}
	Fix $\varphi\in\pi^{-1}(G)$ and set $L=\langle g,\pi(\varphi)\rangle$. Since $L$ is abelian and liftable, any lifting $\widetilde L$ of $L$ is abelian and $\pi^{-1}(L)=\KK^\times\widetilde L$ is also abelian. Hence $[\widetilde g,\varphi]=\I_V$. Every eigenspace of $\widetilde g$ is therefore $\varphi$-invariant, and the conclusion follows as $\varphi$ was arbitrary.
\end{proof}

\begin{lem}
	\label{lemma: decomposition of F}
	Let $F\in S^d(V^*)$ be nonsingular, and let $ p $ be a prime dividing $ d $. Let $ \varphi \in \GL(V) $ be an element of order $ p $ such that $ F \circ \varphi = F $. Let $ V_{1} $ be a nonzero eigenspace of $ \varphi $. Then the restriction $ F_{1} $ of $ F $ to $ V_{1} $ is a nonsingular homogeneous form. In particular, it is not zero.
\end{lem}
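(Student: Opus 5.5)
Since $\varphi$ has order $p$ and $\KK$ has characteristic zero, $\varphi$ is diagonalizable. Its eigenvalues are $p$-th roots of unity, so we get a decomposition $V=\bigoplus_{\zeta\in\mu_p}V_\zeta$ into eigenspaces. Choose coordinates adapted to this decomposition: linear forms $x_{\zeta,1},\dots,x_{\zeta,n_\zeta}$ on $V$ that vanish on $V_{\zeta'}$ for $\zeta'\neq\zeta$ and restrict to a basis of $V_\zeta^*$. Then $\varphi^*$ acts on $x_{\zeta,i}$ by a character determined by $\zeta$, and a monomial $\prod x_{\zeta,i}^{a_{\zeta,i}}$ is an eigenvector of $\varphi^*$ with eigenvalue $\prod\zeta^{a_{\zeta,i}}$ (up to the fixed inverse convention). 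The condition $F\circ\varphi=F$ says that only monomials with eigenvalue $1$ occur in $F$.

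Fix $V_1=V_\eta$ and write $F=F_1+R$. Here $F_1$ is the part of $F$ involving only the variables $x_{\eta,i}$; this is exactly the restriction of $F$ to $V_1$ extended trivially. Every monomial of $R$ contains at least one variable from some $V_{\zeta}$ with $\zeta\neq\eta$. The key claim is that no monomial of $R$ has degree exactly $d-1$ in the $V_\eta$-variables. Suppose such a monomial occurs; it has the form $x_{\eta}^{\alpha}x_{\zeta,j}$ with $|\alpha|=d-1$. Its eigenvalue is $\eta^{d-1}\zeta$. Since $p\mid d$ and $\eta^p=1$, we have $\eta^{d}=1$, so the eigenvalue equals $\eta^{-1}\zeta\neq1$. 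Hence such monomials cannot appear in $F$. This is the one place where $p\mid d$ is used, and I expect this bookkeeping to be the main point of the proof.

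Consequently, at any point $v\in V_1$ (where all coordinates outside $V_\eta$ vanish), every partial derivative of $R$ vanishes. Indeed, a monomial of $R$ has $V_\eta$-degree at most $d-2$, so it contains at least two variables from outside $V_\eta$ (counted with multiplicity). Its derivative with respect to any variable therefore still contains at least one variable from outside $V_\eta$, and so vanishes on $V_1$. Also $\partial F_1/\partial x_{\zeta,j}=0$ for $\zeta\neq\eta$. Thus for $v\in V_1$ we get
\[
\frac{\partial F}{\partial x_{\zeta,j}}(v)=0 \quad(\zeta\neq\eta),\qquad \frac{\partial F}{\partial x_{\eta,i}}(v)=\frac{\partial F_1}{\partial x_{\eta,i}}(v).
\]
Now suppose $0\neq v\in V_1$ is a singular zero of $F_1$, meaning all $\partial F_1/\partial x_{\eta,i}$ vanish at $v$. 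By the display, all partials of $F$ vanish at $v$, and hence $F(v)=0$ by Euler's formula. This contradicts the nonsingularity of $F$. Therefore $F_1$ is nonsingular.

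Finally, $F_1\neq0$. If $F_1=0$, every nonzero $v\in V_1$ (and $V_1\neq0$) would be a common zero of $F_1$ and all its partials, i.e.\ a singular zero in the sense just excluded. Equivalently, the zero form on a nonzero space is not nonsingular. So $F_1$ is a nonzero nonsingular form of degree $d$ on $V_1$.
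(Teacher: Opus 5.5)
Your proposal is correct and follows essentially the same route as the paper: the weight computation $\eta^{d-1}\zeta=\eta^{-1}\zeta\neq1$ (using $p\mid d$) rules out monomials of $V_1$-degree exactly $d-1$, so $F\equiv F_1$ modulo the square of the ideal of $V_1$, and a singular point of $F_1$ would then be a singular point of $F$. Your appeal to Euler's formula and your separate check that $F_1\neq 0$ only spell out details the paper leaves implicit.
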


\begin{proof}
	Choose a basis $ e_{0},\cdots,e_{N-1} $ of $ V $ such that $ \varphi $ is represented by a diagonal matrix. Without loss of generality we may assume $ V_{1} $ is spanned by $ e_{0},\cdots,e_{k-1} $, where $ k \geq 1 $. Denote by $ x_{0},\cdots,x_{N-1} $ the dual basis of $ e_{i} $ and $ f $ the representative polynomial of $ F $ under this basis. Let $\alpha$ be the eigenvalue of $\varphi$ on $V_1$. Since $p\mid d$, we have $\alpha^d=1$. A degree-$d$ monomial of degree $d-1$ in the $V_1$-variables and degree $1$ in a variable from an eigenspace with eigenvalue $\beta\neq\alpha$ has weight $\alpha^{d-1}\beta=\alpha^{-1}\beta\neq1$. It therefore cannot occur in the $\varphi$-invariant form $F$. Consequently, there exists $ f_{1} \in \KK[x_{0},\cdots,x_{k-1}] $ such that
	\begin{equation*}
		f(x_{0},\cdots,x_{N-1}) \equiv f_{1}(x_{0},\cdots,x_{k-1}) \pmod{(x_{k},\cdots,x_{N-1})^{2}}.
	\end{equation*}
	
	If a nonzero point of $V_1$ were singular for $f_1=F|_{V_1}$, the congruence would make all tangential and normal first derivatives of $F$ vanish there, contradicting the nonsingularity of $F$. Thus $ f_{1} $ is nonsingular and represents $ F_{1} = F|_{V_{1}} $ under the basis $ e_{0},\cdots,e_{k-1} $.
\end{proof}

\section{Liftability in Prime Dimension}
\label{section: N=p}
In this section, fix a prime $ p $, a $ p $-dimensional $\KK$-vector space $ V $, and a nonsingular homogeneous form $F\in S^{d}(V^{*})$ of degree $d \geq 1$.

\begin{prop}
	\label{proposition: liftability of p-groups when N=p}
	Let $G$ be a $p$-subgroup of $\PGL(V)$. The following statements are equivalent:
	\begin{enumerate}
		\item $ G $ is liftable.
		\item  all abelian subgroups of $G$ are liftable.
		\item $ \pi^{-1}(G) $ is abelian.
	\end{enumerate}
\end{prop}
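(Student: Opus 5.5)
We prove the cycle of implications (1)$\Rightarrow$(2)$\Rightarrow$(3)$\Rightarrow$(1).

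\emph{(1)$\Rightarrow$(2).} A lifting $\widetilde G$ of $G$ restricts to a lifting $\widetilde G\cap\pi^{-1}(H)$ of any subgroup $H\subseteq G$. So every subgroup of $G$, abelian or not, is liftable.

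\emph{(3)$\Rightarrow$(1).} Assume $\pi^{-1}(G)$ is abelian and choose a $p$-subgroup $\widetilde G\subseteq\pi^{-1}(G)$ mapping onto $G$, as in Lemma \ref{lemma: cover of p-groups}. Then $\widetilde G$ is a finite abelian $p$-group of commuting diagonalizable matrices, so $V$ splits into one-dimensional $\widetilde G$-invariant lines. Let $V_1$ be one of them. The group $G_1\subseteq\PGL(V_1)=1$ is trivial, hence liftable, and Lemma \ref{lemma: liftability and decompositions} shows that $G$ is liftable. Alternatively, the characters of $\widetilde G$ on the lines give a homomorphism $\widetilde G\to(\KK^\times)^p$, and one can cut out a complement of $\widetilde G\cap\KK^\times$ directly. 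The lemma route is cleaner.

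\emph{(2)$\Rightarrow$(3).} This is the main step. Let $\widetilde G$ be as above. By Corollary \ref{corollary: dimension of irreducible representations of G}, every irreducible $\pi^{-1}(G)$-subrepresentation of $V$ has dimension a power of $p$, and $\dim V=p$. So either $V$ is irreducible, or $V$ is a sum of $p$ one-dimensional summands, in which case $\widetilde G$ acts diagonally and $\pi^{-1}(G)=\KK^\times\widetilde G$ is abelian.

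It therefore suffices to rule out irreducibility. Assume $V$ is irreducible. Since $\widetilde G$ is a $p$-group, it is nonabelian, so we can pick $x\in\widetilde G$ outside the center. In fact we can take $x$ so that $\pi(x)$ is central in $G$ but $x$ is not central in $\widetilde G$. If instead $\pi(Z(G))$ lifted into the center everywhere, we would pass to a maximal abelian normal subgroup. The cleanest choice is this: since $G$ is a nontrivial $p$-group, choose $g\in Z(G)$ of order $p$. By hypothesis (2) each abelian group $\langle g,h\rangle$ is liftable, so Lemma \ref{lemma: the space decomposition} applies. Every eigenspace of a lift $\widetilde g$ is $\pi^{-1}(G)$-invariant. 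Irreducibility then forces $\widetilde g$ to have a single eigenspace, so $\widetilde g$ is scalar and $g=1$, a contradiction.

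The subtle point is that Lemma \ref{lemma: the space decomposition} needs $\langle g,h\rangle$ to be abelian, which holds because $g$ is central. So hypothesis (2) is exactly what is used, and the argument closes. If $G$ is trivial, all three statements hold trivially.
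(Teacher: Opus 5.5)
Your proof is correct. For the key implication (2)$\Rightarrow$(3) you use the paper's ingredients: a central $g$ of order $p$, Lemma~\ref{lemma: the space decomposition}, and Corollary~\ref{corollary: dimension of irreducible representations of G}. You only rearrange them as the dichotomy ``irreducible or a sum of lines'' followed by excluding irreducibility.

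The genuine difference is in (3)$\Rightarrow$(1).
\begin{itemize}
\item The paper writes $G$ as a product of cyclic groups and picks order-preserving lifts of the generators. These commute because $\pi^{-1}(G)$ is abelian, and the paper checks that they generate a lifting.
\item You simultaneously diagonalize the finite abelian cover $\widetilde G$ and take a common eigenline $V_1$. It is $\pi^{-1}(G)$-invariant because $\pi^{-1}(G)=\KK^\times\widetilde G$. You then apply Lemma~\ref{lemma: liftability and decompositions} with trivial $G_1$. In effect, your lifting is the kernel of the eigencharacter of $\pi^{-1}(G)$ on $V_1$.
\end{itemize}
Your route is shorter, and it needs only a one-dimensional $\pi^{-1}(G)$-invariant subspace rather than commutativity of all of $\pi^{-1}(G)$.

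The paper's route has a different advantage: it carries over verbatim to Proposition~\ref{proposition: F-liftability when N=p} by replacing order-preserving lifts with $F$-liftings. Yours does not carry over directly, because Lemma~\ref{lemma: F-liftability and decompositions} needs $F|_{V_1}\neq0$, and $F$ can vanish on every common eigenline of a liftable group. For example, take $p=3$, $F=x_0^2x_1+x_1^2x_2+x_2^2x_0$, and $\zeta^3=1\neq\zeta$. The element $\diag(1,\zeta,\zeta^2)$ multiplies $F$ by $\zeta$, so its image lies in $\Lin(F)$. Its common eigenlines are the three coordinate points, and $F$ vanishes at all of them. Adapting your route would therefore require extra work using the $F$-liftability of elements.

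There are two blemishes, neither of which affects the proof.
\begin{itemize}
\item The aside about cutting out a complement of $\widetilde G\cap\KK^\times$ is false if the complement is taken inside $\widetilde G$. For $p=2$ and $G=\langle\pi(\diag(1,-1))\rangle$, the cover $\langle\diag(\iota,-\iota)\rangle\simeq\ZZ/4\ZZ$ with $\iota^2=-1$ contains $-\I_V$, which has no complement. The complement has to be taken in $\pi^{-1}(G)$, which is exactly what the lemma does.
\item In (2)$\Rightarrow$(3), delete the sentences before ``the cleanest choice''. The claim ``$\widetilde G$ is a $p$-group, so it is nonabelian'' is a non sequitur; nonabelianness actually comes from irreducibility in dimension $p>1$. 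The remarks about choosing $x$ are unjustified and never used.
\end{itemize}
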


\begin{proof}
	Every subgroup of a liftable group is liftable, so (1) implies (2).
	
	Assume (2). If $G$ is trivial, then (3) is immediate. Otherwise, choose an element $g$ of order $p$ in the center of $G$. For every $h\in G$, the subgroup $\langle g,h\rangle$ is abelian and hence liftable by (2). Lemma \ref{lemma: the space decomposition} therefore shows that the eigenspaces of a lifting of $g$ are $\pi^{-1}(G)$-invariant. Since $g$ is nontrivial, this gives a nontrivial decomposition of $V$ as a $\pi^{-1}(G)$-representation. If $V=V_1\oplus\cdots\oplus V_k$ is an irreducible decomposition, Corollary \ref{corollary: dimension of irreducible representations of G} implies that every $\dim V_i$ is a power of $p$. Since $\dim V=p$ and $k>1$, all $V_i$ are one-dimensional. Hence $\pi^{-1}(G)$ is abelian, proving (3).
	
	We finally prove that (3) implies (1). Since $\pi^{-1}(G) $ is abelian, the group $G$ is abelian. By the structure theorem of finite abelian groups, there exist $ g_{1},\cdots,g_{k} \in G $ such that
	\begin{equation*}
		G = \langle g_{1} \rangle \times \cdots \times \langle g_{k} \rangle.
	\end{equation*}
	
	For each $i$, choose a preimage $\widetilde g_i$ having the same order as $g_i$. These preimages commute because $\pi^{-1}(G)$ is abelian. Then the group
	\begin{equation*}
		\widetilde{G} = \langle \widetilde{g_{1}},\ldots,\widetilde{g_{k}} \rangle
	\end{equation*}
	maps isomorphically onto $G$: a scalar element of $\widetilde G$ would give a relation among the independent cyclic factors of $G$, and the order-preserving choice of the $\widetilde g_i$ then makes that element the identity. Thus $ G $ is liftable.
\end{proof}

\begin{lem}
	\label{lemma: structure of semi-commutative pair when N=p}
	Suppose that $\varphi,\psi\in\GL(V)$ and $\zeta\neq1$ satisfy $\varphi\psi=\zeta\psi\varphi$. Then $\zeta$ is a primitive $p$-th root of unity, and there exist a basis $e_0,\ldots,e_{p-1}$ of $V$ and $c_1,c_2\in\KK^\times$ such that
	\begin{equation*}
		\varphi(e_{i}) = c_{1}\zeta^{i}e_{i},\quad \psi(e_{i}) = c_{2}e_{i+1}, \quad e_{p}=e_{0}.
	\end{equation*}
In particular, $ \varphi^{p} $ and $ \psi^{p} $ are scalar transformations. 
\end{lem}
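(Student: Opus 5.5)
The plan is to treat this as the standard Heisenberg-type argument: conjugation by $\psi$ permutes the eigenspaces of $\varphi$ cyclically, and $\dim V = p$ prime forces that cycle to have length exactly $p$ with one-dimensional eigenspaces.

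\textbf{Step 1: the scalar $\zeta$ is a root of unity.} Rewrite the hypothesis as $\psi^{-1}\varphi\psi=\zeta\varphi$. Taking determinants gives $\det\varphi=\zeta^{p}\det\varphi$. Since $\varphi$ is invertible, $\zeta^p=1$. As $\zeta\neq1$ and $p$ is prime, $\zeta$ is a primitive $p$-th root of unity.

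\textbf{Step 2: $\psi$ permutes the eigenvalues of $\varphi$.} Let $v$ be an eigenvector of $\varphi$ with eigenvalue $\lambda$. Then
\begin{equation*}
\varphi(\psi v)=\zeta\psi\varphi v=\zeta\lambda\,\psi v .
\end{equation*}
So $\psi$ maps the $\lambda$-eigenspace $E_\lambda$ of $\varphi$ injectively into $E_{\zeta\lambda}$. Iterating, the nonzero spaces $E_{\lambda},E_{\zeta\lambda},\ldots,E_{\zeta^{p-1}\lambda}$ have non-decreasing dimensions around the cycle. Since the cycle closes up ($\zeta^p=1$), all these dimensions are equal and positive.

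\textbf{Step 3: counting dimensions.} The generalized eigenspaces satisfy the same property, because $\psi$ conjugates $\varphi-\lambda$ to a scalar multiple of $\varphi-\zeta\lambda$. Their direct sum is $V$, of dimension $p$, and contains the $p$ nonzero pieces indexed by $\lambda\mu_p$. Hence each piece is one-dimensional, and there is no room for any other eigenvalue. In particular $\varphi$ is diagonalizable with eigenvalues $c_1\zeta^i$ for $i=0,\ldots,p-1$, where $c_1=\lambda$.

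\textbf{Step 4: building the basis.} Pick $e_0\in E_{c_1}$ nonzero. Since $\psi$ maps $E_{c_1\zeta^i}$ isomorphically onto $E_{c_1\zeta^{i+1}}$, we have $\psi^p e_0=a e_0$ for some $a\neq0$. Choose $c_2$ with $c_2^p=a$ and set $e_i=c_2^{-i}\psi^i e_0$ for $0\le i\le p-1$. Then $\psi e_i=c_2e_{i+1}$ for $i<p-1$, and
\begin{equation*}
\psi e_{p-1}=c_2^{-(p-1)}a\,e_0=c_2e_0 ,
\end{equation*}
which is the case $e_p=e_0$. Also $\varphi e_i=c_1\zeta^ie_i$ by Step 2.

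\textbf{Step 5: the $p$-th powers.} Directly, $\varphi^p=c_1^p\I_V$ because $\zeta^p=1$, and $\psi^p=c_2^p\I_V$ from the cyclic shift.

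The only delicate point is Step 3, namely ruling out repeated or extra eigenvalues. Dimension counting over generalized eigenspaces, using that $p$ is prime, settles it.
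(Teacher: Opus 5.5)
Your proof is correct and follows the paper's route: the determinant gives $\zeta^p=1$, $\psi$ shifts the eigenvalues of $\varphi$ by $\zeta$, $\dim V=p$ forces $p$ one-dimensional eigenspaces, and the basis is $e_i=c_2^{-i}\psi^i e_0$ with $c_2^p=a$. The paper skips your generalized-eigenspace count in Step 3: $e_0,\psi e_0,\ldots,\psi^{p-1}e_0$ are eigenvectors for the $p$ distinct eigenvalues $c_1\zeta^i$, so they are linearly independent and already a basis.
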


\begin{proof}
	The equality $ \varphi\psi = \zeta \psi\varphi $ implies that $ \zeta^{p} = \det(\varphi\psi\varphi^{-1}\psi^{-1}) = 1 $. Since $ \zeta \neq 1 $, it is a primitive $ p $-th root of unity.
	
	Choose a $c_1$-eigenvector $e_0$ of $\varphi$. The vectors $e_0,\psi(e_0),\ldots,\psi^{p-1}(e_0)$ belong to the distinct eigenspaces of $\varphi$ with eigenvalues $c_1,c_1\zeta,\ldots,c_1\zeta^{p-1}$, and hence form a basis of $V$. Since $\psi^p(e_0)=a e_0$ for some $a\in\KK^\times$, choose $c_2\in\KK^\times$ with $c_2^p=a$ and set $e_i=c_2^{-i}\psi^i(e_0)$. Then
	\begin{equation*}
		\varphi(e_i)=c_1\zeta^i e_i,\qquad \psi(e_i)=c_2e_{i+1},\qquad e_p=e_0.
	\end{equation*}
	The final assertion follows immediately.
\end{proof}

\begin{proof}[Proof of Theorem \ref{main theorem: subgroup criterion of liftability when N=p}]
	Only the sufficiency requires proof. Suppose, to the contrary, that $ G $ is not liftable. By Lemma \ref{lemma: Sylow criterion of liftability}, there exists a Sylow $ p $-subgroup $ G_{p} $ of $ G $ that is not liftable. Proposition \ref{proposition: liftability of p-groups when N=p} then gives an abelian subgroup $ H $ of $ G_{p} $ such that $ \pi^{-1}(H) $ is not abelian. Choose $ \varphi,\psi \in \pi^{-1}(H) $ that do not commute. Their projective images commute, so there exists $ \zeta \neq 1 $ such that $ \varphi\psi = \zeta\psi\varphi $. By Lemma \ref{lemma: structure of semi-commutative pair when N=p}, we have $ \pi(\varphi)^{p}=\pi(\psi)^{p}=1 $. The nontrivial scalar commutator implies that neither projective image is trivial and that neither is a power of the other. Hence
	\begin{equation*}
		\langle\pi(\varphi),\pi(\psi)\rangle\simeq(\ZZ/p\ZZ)^2.
	\end{equation*}
	The same scalar commutator shows that this subgroup is not liftable, contradicting the assumption.
\end{proof}

The preceding proof also gives the following concrete criterion.

\begin{cor}
		\label{corollary: structure of unliftable G when N=p}
	Let $G$ be a finite subgroup of $\PGL(V)$, and fix a primitive $p$-th root of unity $\zeta$. Then $G$ is not liftable if and only if there exist $\varphi,\psi\in\pi^{-1}(G)$ such that $\varphi^p=\psi^p=\I_V$ and $\varphi\psi=\zeta\psi\varphi$.
\end{cor}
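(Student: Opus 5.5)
We prove the two directions separately; the converse direction carries the content.

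\emph{Sufficiency.} Suppose $\varphi,\psi\in\pi^{-1}(G)$ satisfy $\varphi^p=\psi^p=\I_V$ and $\varphi\psi=\zeta\psi\varphi$. If $G$ had a lifting $\widetilde G$, then $\pi^{-1}(G)=\KK^\times\widetilde G$. Write $\varphi=a\widetilde\varphi$ and $\psi=b\widetilde\psi$ with $\widetilde\varphi,\widetilde\psi\in\widetilde G$. Then
\[
[\widetilde\varphi,\widetilde\psi]=[\varphi,\psi]=\zeta\I_V
\]
would be a nontrivial scalar lying in $\widetilde G$. This is impossible, so $G$ is not liftable. (This direction does not even need the relations $\varphi^p=\psi^p=\I_V$.)

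\emph{Necessity.} Assume $G$ is not liftable, and follow the proof of Theorem \ref{main theorem: subgroup criterion of liftability when N=p}.

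First, by Lemma \ref{lemma: Sylow criterion of liftability} some Sylow $p$-subgroup $G_p$ is not liftable. Proposition \ref{proposition: liftability of p-groups when N=p} then shows that $\pi^{-1}(G_p)$ is not abelian. In the proof of the theorem this gave noncommuting $\varphi_0,\psi_0\in\pi^{-1}(H)$ for some abelian $H\subseteq G_p$. Alternatively, take any noncommuting pair inside $\pi^{-1}(G_p)$ whose images commute, which exists by the argument there. Their commutator is a scalar $\zeta_0\neq1$, so $\varphi_0\psi_0=\zeta_0\psi_0\varphi_0$.

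Second, Lemma \ref{lemma: structure of semi-commutative pair when N=p} says $\zeta_0$ is a primitive $p$-th root of unity and $\varphi_0^p,\psi_0^p$ are scalars. Rescale each of $\varphi_0,\psi_0$ by a $p$-th root of the inverse of that scalar, so that $\varphi_0^p=\psi_0^p=\I_V$. Scaling does not change the commutator, and the rescaled maps stay in $\pi^{-1}(G)$ because $\pi^{-1}(G)$ contains all scalars.

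Third, we must match the prescribed $\zeta$. Write $\zeta=\zeta_0^k$ with $p\nmid k$, and replace $\varphi_0$ by $\varphi=\varphi_0^k$. Then $\varphi^p=\I_V$, and
\[
\varphi\psi_0\varphi^{-1}=\zeta_0^k\psi_0=\zeta\psi_0,
\]
since conjugating $\psi_0$ by $\varphi_0$ multiplies it by $\zeta_0$. Hence $\varphi\psi_0=\zeta\psi_0\varphi$, and $\psi=\psi_0$ completes the required pair.

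The only point needing care is the exponent bookkeeping in the commutator identity, together with the observation that the pair produced inside $\pi^{-1}(G_p)$ automatically lies in $\pi^{-1}(G)$. No genuine obstacle is expected.
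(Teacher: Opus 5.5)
Your proof is correct and follows the paper's own argument. The paper likewise gets a non-liftable abelian $p$-subgroup from the Sylow criterion and Proposition \ref{proposition: liftability of p-groups when N=p}, uses Lemma \ref{lemma: structure of semi-commutative pair when N=p} to rescale the noncommuting lifts to $p$-th power $\I_V$, and takes a power to reach the prescribed $\zeta$; for the converse it notes that the scalar commutator survives rescaling. One sentence needs tightening: the pair with commuting images comes from the failure of condition (2) of that proposition, i.e.\ from a non-liftable abelian $H\subseteq G_p$, not merely from $\pi^{-1}(G_p)$ being nonabelian — which you do supply in effect by citing the theorem's proof.
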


\begin{proof}
	If $G$ is not liftable, the proof of Theorem \ref{main theorem: subgroup criterion of liftability when N=p} produces two projective elements of order $p$ with lifts having a nontrivial scalar commutator. Rescaling these lifts makes their $p$-th powers equal to $\I_V$, and replacing one projective generator by a suitable nonzero power changes the commutator to the prescribed root $\zeta$. Conversely, the displayed relation is unchanged by scalar rescaling, so the abelian projective subgroup generated by $\pi(\varphi)$ and $\pi(\psi)$ cannot admit commuting lifts. Hence neither that subgroup nor $G$ is liftable.
\end{proof}

\begin{prop}
	\label{proposition: F-liftability when N=p}
	A $p$-subgroup $G \subseteq \Lin(F)$ is $ F $-liftable if and only if $G$ is liftable and all of its elements are $F$-liftable.
\end{prop}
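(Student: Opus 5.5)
Necessity is immediate: an $F$-lifting of $G$ is in particular a lifting, and its restriction to any cyclic subgroup $\langle g\rangle$ is an $F$-lifting of $g$. For sufficiency, assume $G$ is liftable and every element is $F$-liftable. By Proposition~\ref{proposition: liftability of p-groups when N=p}, the group $\pi^{-1}(G)$ is abelian, and $G$ itself is abelian. The plan is to build an $F$-lifting generator by generator, mimicking the last step of the proof of Proposition~\ref{proposition: liftability of p-groups when N=p}.

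Write $G=\langle g_1\rangle\times\cdots\times\langle g_k\rangle$. For each $i$, use the hypothesis to choose an $F$-lifting $\widetilde g_i\in\pi^{-1}(g_i)$, that is, an $F$-preserving matrix with $\ord(\widetilde g_i)=\ord(g_i)$. These matrices commute because $\pi^{-1}(G)$ is abelian. Set $\widetilde G=\langle\widetilde g_1,\ldots,\widetilde g_k\rangle$; it consists of $F$-preserving matrices, since the $F$-preserving matrices form a group.

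It remains to check that $\pi|_{\widetilde G}$ is injective. Because the generators commute, every element of $\widetilde G$ has the form $\widetilde g_1^{a_1}\cdots\widetilde g_k^{a_k}$. If such an element is scalar, then $g_1^{a_1}\cdots g_k^{a_k}=1$ in $G$. By independence of the cyclic factors, each $g_i^{a_i}=1$, so $\ord(g_i)\mid a_i$, and hence $\widetilde g_i^{a_i}=\I_V$ because $\ord(\widetilde g_i)=\ord(g_i)$. Therefore the element is the identity, and $\widetilde G$ is an $F$-lifting of $G$.

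There is no real obstacle here. The key input is that liftability of $G$ in dimension $p$ forces the full preimage $\pi^{-1}(G)$ to be abelian, via Proposition~\ref{proposition: liftability of p-groups when N=p}. This commutativity is exactly what lets the individually chosen $F$-liftings assemble into a subgroup, because it rules out any scalar commutator between them.
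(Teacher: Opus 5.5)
Your proof is correct and takes the same approach as the paper. You use Proposition~\ref{proposition: liftability of p-groups when N=p} to make $\pi^{-1}(G)$ abelian, decompose $G$ into cyclic factors, choose commuting $F$-liftings of the generators, and get injectivity from the independence of the factors together with the order-preserving choice of lifts; you also write out the necessity direction and the injectivity check, which the paper leaves terse.
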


\begin{proof}
	Only the sufficiency requires proof. Since $ G $ is liftable, Proposition \ref{proposition: liftability of p-groups when N=p} shows that $ \pi^{-1}(G) $ is abelian. Hence $G$ is abelian. By the structure theorem of finite abelian groups, there exist $ g_{1},\cdots,g_{k} \in G $ such that
	\begin{equation*}
		G = \langle g_{1} \rangle \times \cdots \times \langle g_{k} \rangle.
	\end{equation*}
	
	Choose an $ F $-lifting $ \widetilde{g_{i}} $ for each $ g_{i} $. These matrices commute because $\pi^{-1}(G)$ is abelian. Then the group
	\begin{equation*}
		\widetilde{G} = \langle \widetilde{g_{1}},\ldots,\widetilde{g_{k}} \rangle
	\end{equation*}
	is an $ F $-preserving group that maps isomorphically onto $G$, by the direct-product decomposition and the order-preserving property of each $F$-lifting. Thus $ G $ is $ F $-liftable.
\end{proof}

\begin{lem}
	\label{lemma: structure of non-F-liftable element}
	Let $G$ be a finite subgroup of $\Lin(F)$. If some element of $G$ is not $F$-liftable, then $p\mid d$ and there exist a primitive $p$-th root of unity $\zeta$, an element $\varphi\in\pi^{-1}(G)$, and a basis $e_0,\ldots,e_{p-1}$ of $V$ such that
	\begin{equation*}
		F\circ\varphi=\zeta F,\qquad \varphi(e_i)=\zeta^i e_i.
	\end{equation*}
	In particular, $\varphi^p=\I_V$, and $\pi(\varphi)$ is an element of order $p$ that is not $F$-liftable.
\end{lem}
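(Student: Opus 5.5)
The plan is to reduce to a non-$F$-liftable element of order $p$ in two steps. I will then read off the required basis from the normal form in Lemma \ref{lemma: normal form of F-preserving elements}, applied to an $F$-preserving preimage of the kind built in Lemma \ref{lemma: existence of F-preserving p-preimages for single elements}.

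\textbf{Step 1: reduction to a $p$-element, and $p\mid d$.} Let $g\in G$ be non-$F$-liftable and write $g=\prod_q g_q$ as a product of its primary components, each a power of $g$. Fix any $F$-preserving preimage $A$ of $g$. If every $g_q$ has an $F$-lifting $B_q$, then each $B_q$ is a scalar multiple of a power of $A$, so the $B_q$ commute. Their product is $F$-preserving, lies over $g$, and has order $\mathrm{lcm}(\ord B_q)=\ord g$. That would make $g$ $F$-liftable, so some $g_q$ is not $F$-liftable. By Corollary \ref{corollary: liftability of p groups when gcd(d,N,p)=1}(2) with $N=p$, this forces $q\mid\gcd(p,d)$, hence $q=p$ and $p\mid d$.

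\textbf{Step 2: reduction to order $p$.} Let $h=g_p$, of order $p^r$. Take $c$ minimal with $h^{p^c}$ $F$-liftable, and put $k=h^{p^{c-1}}$. Then $k$ is not $F$-liftable but $k^p$ is, so the invariant $c$ of Lemma \ref{lemma: existence of F-preserving p-preimages for single elements} for $k$ equals $1$. That lemma gives an $F$-preserving $\widetilde k\in\pi^{-1}(k)$ of order $p^{s}$ with $s=1+\nu_p(d)$.

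\textbf{Step 3: the normal form.} As in the proof of Theorem \ref{theorem: order of elements in Lin(F)}(2), Lemma \ref{lemma: normal form of F-preserving elements} puts $\widetilde k$ in block form $\diag(D_{p,d}(\zeta_1),\dots,D_{p,d}(\zeta_m))$ with every $\zeta_j$ a primitive $p^s$-th root of unity. So each block has size $\ell_{p,d}(s)$, which divides $N=p$.
\begin{itemize}
\item If $p$ is odd or $4\mid d$, Lemma \ref{lemma: computation of the order of 1-d wrt p^r} gives $\ell_{p,d}(s)=p^{s-\nu_p(d)}=p$.
\item If $p=2$ and $4\nmid d$, then $s=2$, and either $\ell=2$ directly or $\ell=2^{2-\nu_2(d-2)}$ with $\nu_2(d-2)\ge 2$, which again gives $2$.
\end{itemize}
In both cases there is a single block, so $\widetilde k=D_{p,d}(\eta)$ with $\eta$ primitive of order $p^s$. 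Now set $\varphi=\eta^{-1}\widetilde k$. Its eigenvalues are $\eta^{(1-d)^i-1}$ for $0\le i<p$. By the valuation formulas in the proof of Lemma \ref{lemma: computation of the order of 1-d wrt p^r}, we have $\nu_p((1-d)^i-1)=\nu_p(d)=s-1$ for $0<i<p$; in the case $p=2$ with $4\nmid d$ this reads $1-d\equiv-1\pmod 4$. Hence these eigenvalues are $p$ pairwise distinct $p$-th roots of unity, that is, all of $\mu_p$. Consequently $\varphi^p=\I_V$, and $F\circ\varphi=\eta^{-d}F$, where $\zeta:=\eta^{-d}$ is a primitive $p$-th root of unity because $\eta$ has order $p^{1+\nu_p(d)}$. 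Since the eigenvalues of $\varphi$ exhaust $\mu_p$, I can index the eigenbasis as $e_0,\dots,e_{p-1}$ with $\varphi(e_i)=\zeta^ie_i$ for this particular $\zeta$.

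\textbf{Step 4: the final assertions.} $\varphi$ is non-scalar, so $\pi(\varphi)=k$ has order $p$. Any preimage of $k$ of order $p$ has the form $\omega\varphi$ with $\omega\in\mu_p$, and $F\circ(\omega\varphi)=\omega^d\zeta F=\zeta F\neq F$ because $p\mid d$; so $k$ is not $F$-liftable.

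The main obstacle I expect is Step 3: checking that the block size is exactly $p$ and that the eigenvalue ratios are primitive $p$-th roots. This needs care in the exceptional case $p=2$, $4\nmid d$.
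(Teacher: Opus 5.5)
Your proof is correct and is essentially the paper's argument. It reduces to the $p$-part via the Sylow criterion, which also yields $p\mid d$. It then passes to a non-$F$-liftable power whose $p$-th power is $F$-liftable, so that Lemma \ref{lemma: existence of F-preserving p-preimages for single elements} applies with $c=1$ and $s=\nu_p(d)+1$. Finally, the normal form is a single block of length $\ell_{p,d}(s)=p$, and you rescale by the inverse of the primitive $p^s$-th root.

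The differences are cosmetic. The paper reads off $\varphi(e_i)=\zeta^i e_i$ directly from $(1-d)^i\equiv 1-id\pmod{p^s}$ rather than re-indexing an eigenbasis, which also makes the pairwise distinctness of your eigenvalues immediate. Your sentence for $p=2$, $4\nmid d$ is garbled: $2^{2-\nu_2(d-2)}$ is never the relevant value. It should simply say that $\nu_2(d-2)\ge 2$ (or $d=2$) puts $s=2$ in the range where $\ell_{2,d}(2)=2$.
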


\begin{proof}
	Let $g\in G$ be an element that is not $F$-liftable and set $C=\langle g\rangle$. Since $\dim V=p$, the $F$-liftability part of Lemma \ref{lemma: Sylow criterion of liftability} implies that $p\mid d$ and that the Sylow $p$-subgroup $C_p$ of $C$ is not $F$-liftable. Since $C_p$ is cyclic, a generator of $C_p$ is not $F$-liftable. We may therefore choose a non-$F$-liftable $p$-element $g_0\in G$ of minimal order. Then $g_0^p$ is $F$-liftable.

	Apply Lemma \ref{lemma: existence of F-preserving p-preimages for single elements} to $g_0$. Its integer $c$ equals $1$, so there exists $\widetilde g_0\in\pi^{-1}(g_0)$ of order $p^s$, where $s=\nu_p(d)+1$, such that $ F \circ \widetilde{g_0}=F$. Lemma \ref{lemma: computation of the order of 1-d wrt p^r} and direct calculation for $d=2$ gives $\ell_{p,d}(s)=p$ for $d\ge 3$. In the normal form of Lemma \ref{lemma: normal form of F-preserving elements}, at least one root is primitive of order $p^s$ because $\widetilde g_0$ has that order. Its block already has length $p=\dim V$, so it is the only block. Thus there are a primitive $p^s$-th root of unity $\xi$ and a basis $e_0,\ldots,e_{p-1}$ such that
	\begin{equation*}
		\widetilde g_0(e_i)=\xi^{(1-d)^i}e_i.
	\end{equation*}
	Set $\zeta=\xi^{-d}$ and $\varphi=\xi^{-1}\widetilde g_0$. Since $s=\nu_p(d)+1$, the root $\zeta$ is a primitive $p$-th root of unity and
	\begin{equation*}
		(1-d)^i\equiv 1-id\pmod{p^s}.
	\end{equation*}
	It follows that $\varphi(e_i)=\zeta^i e_i$ and $F\circ\varphi=\zeta F$. Finally, any order-$p$ preimage of $\pi(\varphi)$ is $\alpha\varphi$ for some $\alpha$ with $\alpha^p=1$. Since $p\mid d$, it still acts on $F$ by the nontrivial multiplier $\zeta$. Thus $\pi(\varphi)$ is not $F$-liftable.
\end{proof}

\begin{proof}[Proof of Theorem \ref{main theorem: subgroup criterion for F-liftability when N=p}]
	Only the sufficiency requires proof. If $p\nmid d$, the conclusion follows immediately from Lemma \ref{lemma: Sylow criterion of liftability}. Assume that $p\mid d$. By the same lemma, it suffices to prove that a Sylow $p$-subgroup $G_p$ of $G$ is $F$-liftable. Theorem \ref{main theorem: subgroup criterion of liftability when N=p} shows that $G_p$ is liftable. If some element of $G_p$ were not $F$-liftable, Lemma \ref{lemma: structure of non-F-liftable element} would produce a non-$F$-liftable element of order $p$ in $G_p$, contrary to the hypothesis. Thus every element of $G_p$ is $F$-liftable, and Proposition \ref{proposition: F-liftability when N=p} implies that $G_p$ is $F$-liftable.
\end{proof}

\begin{rmk}
	\label{remark: structure of non-liftable G}
	Let $G$ be a finite subgroup of $\Lin(F)$ that is not $F$-liftable. Then $p\mid d$, and the following alternatives describe the two possible obstructions; compare \cite[Lemmas 4.13 and 4.14]{oguiso2019quintic}.
	\begin{enumerate}
		\item If not all elements of $G$ are $F$-liftable, then there exist a primitive $p$-th root of unity $\zeta$, an element $\varphi\in\pi^{-1}(G)$, and a basis $e_0,\ldots,e_{p-1}$ such that $F\circ\varphi=\zeta F$ and $\varphi(e_i)=\zeta^i e_i$. Consequently, with respect to the dual basis $x_0,\ldots,x_{p-1}$, the form $F$ is a linear combination of
		\begin{equation*}
			\{\prod\limits_{i=0}^{p-1}x_i^{\alpha_i} \mid \sum\limits_{i=0}^{p-1}\alpha_{i}=d,\quad\sum\limits_{i=0}^{p-1}i\alpha_{i} \equiv 1 \pmod{p}\}.
		\end{equation*}
		\item If every element of $G$ is $F$-liftable, then there exist a primitive $p$-th root of unity $\zeta$ and elements $\varphi,\psi\in\pi^{-1}(G)$ such that $\varphi\psi=\zeta\psi\varphi$, $F\circ\varphi=F\circ\psi=F$, and, with respect to a suitable basis $e_0,\ldots,e_{p-1}$,
		\begin{equation*}
			\varphi(e_{i}) = \zeta^{i}e_{i},\quad \psi(e_{i}) = e_{i+1}, \quad e_{p}=e_{0}.
		\end{equation*} 
	Consequently, with respect to the dual basis $x_0,\ldots,x_{p-1}$, the form $F$ is a linear combination of
	\begin{equation*}
	\{\sum\limits_{j=0}^{p-1}\prod_{i=0}^{p-1}x_{i}^{\alpha_{i+j}} \mid \sum\limits_{i=0}^{p-1}\alpha_{i}=d,\quad\sum\limits_{i=0}^{p-1}i\alpha_{i} \equiv 0 \pmod{p}\},
	\end{equation*}
	where the indices are read modulo $p$.
	\end{enumerate}
\end{rmk}

\section{The Case $N=2p$ and $d=p$}
\label{section: N=2p and d=p}

\subsection{The General Odd-Prime Case}
Having settled the case $N=p$, we now turn to the more involved case $N=2p$ and $d=p$, where $p$ is an odd prime. By Theorem \ref{theorem: order of elements in Lin(F)}, all elements of $p$-power order in $\Lin(F)$ have order at most $p^2$, and every such element that is not $F$-liftable has order $p$.

We first study forms that split into two disjoint sets of $p$ variables.

\begin{defn}
	\label{definition: split of F}
	Let $ V = V_{1} \oplus V_{2} $ be a direct sum decomposition of $ V $ with $ \dim V_{1} = \dim V_{2} = p $. For $i=1,2$, denote by $ \operatorname{pr}_{i} \colon V \rightarrow V_{i} $ the projection with respect to this decomposition, by $\pi_i\colon\GL(V_i)\to\PGL(V_i)$ the canonical projection, and by $ F_{i} \in S^{p}(V_{i}^{*}) $ the restriction of $ F $ to $ V_{i} $. Note that $ \operatorname{pr}_{i} $ induces a pull-back map $ \operatorname{pr}_{i}^{*} \colon S^{p}(V_{i}^{*}) \rightarrow S^{p}(V^{*}) $. We call $ V = V_{1} \oplus V_{2} $ a split of $ F $ if 
	\begin{equation*}
		F = \operatorname{pr}_{1}^{*}F_{1} + \operatorname{pr}_{2}^{*}F_{2}.
	\end{equation*}
\end{defn}

In coordinates, there exist bases $ e_{0},\cdots,e_{p-1} $ of $ V_{1} $ and $ e_{p},\cdots,e_{2p-1} $ of $ V_{2} $ such that $ F $ has the form
\begin{equation*}
	F = f_{1}(x_{0},\cdots,x_{p-1})+f_{2}(x_{p},\cdots,x_{2p-1}),
\end{equation*}
where $ f_{i} $ is the coordinate polynomial of $ F_{i} $ and $ x_{0},\cdots,x_{2p-1} $ is the dual basis of $ e_{0},\cdots,e_{2p-1} $. Since $ F $ is nonsingular, this coordinate expression implies that both $ F_{1} $ and $ F_{2} $ are nonsingular. 

\begin{lem}
	\label{lemma: eigenspace decomposition induce split}
	Let $F \in S^{p}(V^{*})$ be a nonsingular homogeneous form of degree $p$. Let $ \varphi \in \pi^{-1}(\Lin(F)) $ be an element of order $ p $. Suppose that the eigenspace decomposition of $ \varphi $ has the form $ V = V_{1} \oplus V_{2} $, where $ \dim V_{1} = \dim V_{2} = p $. Then $ V_{1} \oplus V_{2} $ is a split of $ F $.
\end{lem}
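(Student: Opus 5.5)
We diagonalize $\varphi$ and compare weights of monomials. Since $\varphi$ has order $p$ and its eigenspace decomposition has exactly two summands, there are scalars $\alpha\neq\beta$ with $\varphi|_{V_1}=\alpha\I_{V_1}$ and $\varphi|_{V_2}=\beta\I_{V_2}$. Because $\varphi^p=\I_V$, both $\alpha$ and $\beta$ are $p$-th roots of unity, and $\beta=\alpha\omega$ for some primitive $p$-th root of unity $\omega$. Choose bases $e_0,\ldots,e_{p-1}$ of $V_1$ and $e_p,\ldots,e_{2p-1}$ of $V_2$, with dual coordinates $x_0,\ldots,x_{2p-1}$. Since $\pi(\varphi)\in\Lin(F)$, we have $F\circ\varphi=\lambda F$ for some scalar $\lambda$. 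Expand
\begin{equation*}
F=\sum_{k=0}^{p}G_k,
\end{equation*}
where $G_k$ is bihomogeneous of degree $p-k$ in $x_0,\ldots,x_{p-1}$ and degree $k$ in $x_p,\ldots,x_{2p-1}$. Then $G_k\circ\varphi=\alpha^{p-k}\beta^kG_k=\omega^kG_k$, using $\alpha^p=1$. The weights $\omega^k$ for $k=0,\ldots,p-1$ are pairwise distinct, while $k=p$ has the same weight as $k=0$.

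Next we use $F\circ\varphi=\lambda F$. Every nonzero component $G_k$ must have weight $\lambda$. So either $\lambda=1$, in which case only $G_0$ and $G_p$ can be nonzero, or $\lambda=\omega^k$ for a single $k$ with $1\le k\le p-1$, in which case $F=G_k$. Note that $G_0=\operatorname{pr}_1^*F_1$ and $G_p=\operatorname{pr}_2^*F_2$, so the first case is exactly the statement that $V_1\oplus V_2$ is a split of $F$.

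The main obstacle is ruling out the case $F=G_k$ with $1\le k\le p-1$, and nonsingularity is what does this. Since $k\ge 1$, every monomial of $F$ has degree at least $1$ in the $V_2$-variables. If $k\ge2$, every first partial derivative of $F$ then vanishes at any nonzero $v\in V_1$, so $F$ is singular there. If $k=1$, then $p-k=p-1\ge2$ because $p$ is odd, so the symmetric argument applies at any nonzero point of $V_2$. Either way we contradict the nonsingularity of $F$. Hence $\lambda=1$ and $F=\operatorname{pr}_1^*F_1+\operatorname{pr}_2^*F_2$. Where $p=2$ would break this argument, we use that $p$ is odd throughout this section.
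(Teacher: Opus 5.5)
Your proof is correct and follows essentially the same route as the paper: diagonalize $\varphi$, note that a monomial of bidegree $(p-k,k)$ has weight $\omega^k$, use nonsingularity to force $\lambda=1$, and conclude that no mixed monomials survive. The only difference is how nonsingularity pins down $\lambda$. The paper compares the weights of the monomials $x_0^{p-1}x_{j_1}$ and $x_p^{p-1}x_{j_2}$ that nonsingularity forces into $F$, whereas you show that a purely mixed form $G_k$ with $1\le k\le p-1$ is singular along $V_1$ or $V_2$; both arguments use that $p$ is odd at the corresponding point.
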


\begin{proof}
	Choose coordinates $ x_{0},\cdots,x_{2p-1} $ of $ V $ such that $ \varphi $ is represented by the diagonal matrix $ \diag(\zeta_{0},\cdots,\zeta_{2p-1}) $ with respect to the chosen coordinates. Then we may assume
	\begin{equation*}
		\zeta_{0}=\cdots=\zeta_{p-1},\quad 	\zeta_{p}=\cdots=\zeta_{2p-1},\quad \zeta_{0} \neq \zeta_{p}. 
	\end{equation*}
	
	Write $F\circ\varphi=\lambda F$. Nonsingularity implies that, for every coordinate $x_i$, the form $F$ contains a monomial $x_i^{p-1}x_j$ for some $j$; otherwise the corresponding coordinate point would be singular. In particular, $F$ contains monomials $x_0^{p-1}x_{j_1}$ and $x_p^{p-1}x_{j_2}$ for suitable $j_1,j_2$. Hence
	\begin{equation*}
		\lambda=\zeta_0^{p-1}\zeta_{j_1}=\zeta_p^{p-1}\zeta_{j_2}.
	\end{equation*}
	Each expression on the right belongs respectively to $\{1,\zeta_p/\zeta_0\}$ and $\{1,\zeta_0/\zeta_p\}$. Since $p$ is odd and $\zeta_0\neq\zeta_p$, their only common value is $1$. Thus $\lambda=1$, $\zeta_{j_1}=\zeta_0$, and $\zeta_{j_2}=\zeta_p$. Moreover, $\zeta_0^{p-k}\zeta_p^k\neq1$ for $1\leq k\leq p-1$, so no monomial of $F$ involves variables from both eigenspaces. Therefore
	\begin{equation*}
		F = f_{1}(x_{0},\cdots,x_{p-1})+f_{2}(x_{p},\cdots,x_{2p-1}),
	\end{equation*}
	where each $f_i$ is a nonsingular homogeneous polynomial of degree $p$. Hence $F$ splits.
\end{proof}

\begin{lem}
	\label{lemma: liftability for split F}
	Suppose that $ F $ admits a split $ V_{1}\oplus V_{2} $. If all abelian $ p $-subgroups of $ \Lin(F) $ of order $ \le p^{3} $ are liftable, then either $ \Lin(F_{1}) $ or $ \Lin(F_{2}) $ is liftable. 
\end{lem}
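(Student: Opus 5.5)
The plan is to argue by contradiction: assuming neither $\Lin(F_1)$ nor $\Lin(F_2)$ is liftable, I will build an abelian $p$-subgroup of $\Lin(F)$ of order at most $p^3$ that is not liftable. Write $F=\operatorname{pr}_1^*F_1+\operatorname{pr}_2^*F_2$. Both $F_1$ and $F_2$ are nonsingular of degree $p$ on the $p$-dimensional spaces $V_1$ and $V_2$.

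\emph{Step 1: non-commuting pairs on each summand.} Since $\Lin(F_i)$ is not liftable, Corollary \ref{corollary: structure of unliftable G when N=p} (applied in $V_i$, where $\Lin(F_i)$ is finite by the finiteness remark) gives $\varphi_i,\psi_i\in\pi_i^{-1}(\Lin(F_i))$ with $\varphi_i^p=\psi_i^p=\I_{V_i}$ and $\varphi_i\psi_i=\zeta\psi_i\varphi_i$. Here $\zeta$ is a fixed primitive $p$-th root of unity, and it can be taken the same for $i=1,2$ because that corollary lets us prescribe $\zeta$.

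Next I rescale each of $\varphi_i,\psi_i$ so that it preserves $F_i$. If $F_i\circ\varphi_i=\lambda F_i$, then $\varphi_i^p=\I$ forces $\lambda^p=1$. Multiplying $\varphi_i$ by some $\xi$ with $\xi^p=\lambda^{-1}$ makes it $F_i$-preserving. Scalar rescaling does not change the commutator relation $\varphi_i\psi_i=\zeta\psi_i\varphi_i$. After rescaling, $\varphi_i^p$ and $\psi_i^p$ are scalars $a\,\I_{V_i}$ with $a\in\mu_p$: we have $\xi^p=\lambda^{-1}\in\mu_p$, and $F_i\neq0$.

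\emph{Step 2: glue the pairs.} Set
\begin{equation*}
\Phi=\varphi_1\oplus\varphi_2,\qquad \Psi=\psi_1\oplus\psi_2,\qquad \tau=\I_{V_1}\oplus\zeta\I_{V_2}.
\end{equation*}
Because $F$ splits, all three are $F$-preserving: for $\tau$ this uses $\zeta^p=1$ and $d=p$. Hence their images lie in $\Lin(F)$. Moreover $\Phi\Psi=\zeta\Psi\Phi$, $\tau$ commutes with both, and $\Phi^p,\Psi^p$ are of the form $\operatorname{diag}(a_1\I_{V_1},a_2\I_{V_2})$ with $a_i\in\mu_p$. Each such matrix is a scalar times a power of $\tau$.

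Let $H=\langle\pi(\Phi),\pi(\Psi),\pi(\tau)\rangle\subseteq\Lin(F)$. Its generators pairwise commute projectively, so $H$ is abelian. The subgroup $\langle\pi(\tau)\rangle$ has order $p$. The quotient $H/\langle\pi(\tau)\rangle$ is generated by two elements whose $p$-th powers are trivial, so it has order at most $p^2$. Hence $|H|\le p^3$ and $H$ is an abelian $p$-group.

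\emph{Step 3: contradiction.} If $H$ were liftable, then, as in the proof of Lemma \ref{lemma: the space decomposition}, $\pi^{-1}(H)=\KK^\times\widetilde H$ would be abelian. This contradicts $\Phi\Psi\Phi^{-1}\Psi^{-1}=\zeta\I_V\neq\I_V$. So $H$ is a non-liftable abelian $p$-subgroup of $\Lin(F)$ of order at most $p^3$, contradicting the hypothesis. Therefore at least one of $\Lin(F_1)$, $\Lin(F_2)$ is liftable.

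The step that needs the most care is the bookkeeping in Steps 1 and 2. The commutator scalars on $V_1$ and $V_2$ must agree, since otherwise $[\Phi,\Psi]$ is not a scalar and the projective images need not commute; I handle this by using the prescribed $\zeta$ in Corollary \ref{corollary: structure of unliftable G when N=p}. The $F$-preserving rescaling must not push $\Phi^p$ outside $\KK^\times\langle\tau\rangle$, and this is exactly where $\tau$ is needed to keep $|H|\le p^3$.
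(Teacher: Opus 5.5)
Your proof is correct and follows the paper's strategy: take the non-commuting pairs on $V_1$ and $V_2$ with a common commutator $\zeta$, glue them block-diagonally into $F$-preserving matrices, and obtain a nonliftable abelian subgroup of $\Lin(F)$ of order at most $p^3$. The only difference is the normalization. The paper first uses the multiplier character $\chi_i$ and the commutator pairing on $E_i$ to make $\psi_i$ already $F_i$-preserving, using that $p$ is odd to keep $\psi_i^p=\I_{V_i}$, and then rescales only $\varphi_i$ by $\xi^{-\alpha_i}$, so two projective generators of orders at most $p^2$ and $p$ suffice. You instead rescale both matrices and absorb the resulting $p$-th powers into the order-$p$ central subgroup $\langle\pi(\tau)\rangle$, which reaches the same bound slightly more simply and avoids the basis change.
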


\begin{proof}
	Suppose, to the contrary, that neither $\Lin(F_1)$ nor $\Lin(F_2)$ is liftable. Fix a primitive $p$-th root of unity $\zeta$. By Corollary \ref{corollary: structure of unliftable G when N=p}, for $i=1,2$ there exist $\varphi_i,\psi_i\in\pi_i^{-1}(\Lin(F_i))$ such that
	\begin{equation*}
		\varphi_i^p=\psi_i^p=\I_{V_i},\qquad [\varphi_i,\psi_i]=\zeta\I_{V_i}.
	\end{equation*}
	Set $E_i=\langle\pi_i(\varphi_i),\pi_i(\psi_i)\rangle\simeq(\ZZ/p\ZZ)^2$. The kernel of $\langle\varphi_i,\psi_i\rangle\to E_i$ is generated by $\zeta\I_{V_i}$, which acts trivially on the degree-$p$ form $F_i$. Consequently, the multipliers by which the matrices act on $F_i$ descend to a character $\chi_i\colon E_i\to\mu_p$. The scalar commutator defines a nondegenerate alternating pairing on $E_i$. Choose a nonzero vector in $\ker(\chi_i)$ as the second basis vector, and choose the first so that their commutator is $\zeta$. Because $p$ is odd, any product of the original order-$p$ lifts again has $p$-th power $\I_{V_i}$. Relabeling lifts of this new basis as $\varphi_i,\psi_i$, we retain the displayed relations and have $F_i\circ\psi_i=F_i$. Write
	\begin{equation*}
		F_{i} \circ \varphi_{i} = \zeta^{\alpha_{i}} F_{i},\qquad \alpha_i\in\{0,\ldots,p-1\}.
	\end{equation*}
	Let $\xi$ be a primitive $p^2$-th root of unity satisfying $\xi^p=\zeta$, and define $\varphi,\psi\in\GL(V)$ by
	\begin{equation*}
		\varphi|_{V_{i}} = \xi^{-\alpha_{i}}\varphi_{i}, \quad  \psi|_{V_{i}} = \psi_{i}.
	\end{equation*}
	Then $F\circ\varphi=F\circ\psi=F$, $\varphi^{p^2}=\psi^p=\I_V$, and $[\varphi,\psi]=\zeta\I_V$. Thus $\pi(\varphi)$ and $\pi(\psi)$ commute and have orders at most $p^2$ and $p$, respectively. They generate an abelian $p$-subgroup $H\subseteq\Lin(F)$ of order at most $p^3$. The scalar commutator is unchanged if either matrix is multiplied by a scalar, so no lifting of $H$ can have commuting preimages of these two generators. Hence $H$ is not liftable, a contradiction.
\end{proof}

\begin{thm}
	\label{main theorem: subgroup criterion of liftability when N=2p}
	Let $p \geq 3$ be a prime, let $V$ be a $2p$-dimensional $\KK$-vector space, and let $F \in S^{p}(V^{*})$ be a nonsingular homogeneous form of degree $p$. Then $\Lin(F)$ is liftable if and only if all its abelian $p$-subgroups of order at most $p^{3}$ are liftable. In particular, $\Lin(F)$ is liftable if and only if it satisfies the property $\mathrm{L}(p,3)$.
\end{thm}

\begin{proof}
	Only the sufficiency requires proof. Since $2\nmid\gcd(2p,p)$, Corollary \ref{corollary: liftability of p groups when gcd(d,N,p)=1}(2) shows that every $2$-subgroup of $\Lin(F)$ is $F$-liftable, hence liftable. By Lemma \ref{lemma: Sylow criterion of liftability}, it therefore suffices to prove that a Sylow $ p $-subgroup $ G $ of $ \Lin(F) $ is liftable. If $G$ is trivial, there is nothing to prove.

	Choose an element $g$ of order $ p $ in the center of $G$. For every $ h \in G $, the order bound gives $\ord(h)\leq p^2$, so the abelian subgroup $ \langle g,h \rangle $ has order at most $p^3$ and is liftable by hypothesis. Pick a lifting $\widetilde{g}$ of $g$ and consider its eigenspace decomposition:
	\begin{equation*}
		V = V_{1} \oplus \cdots \oplus V_{k}.
	\end{equation*} 
	By Lemma \ref{lemma: the space decomposition}, all $ V_{i} $ are $\pi^{-1}(G)$-invariant. Define $ G_{i} \subseteq \PGL(V_{i}) $ as in Lemma \ref{lemma: liftability and decompositions}.
	
	If $p\nmid\dim V_i$ for some $i$, Corollary \ref{corollary: liftability of p groups when gcd(d,N,p)=1}(1) shows that $G_i$ is liftable. Lemma \ref{lemma: liftability and decompositions} then implies that $G$ is liftable.
	
	Otherwise, $k=2$ and $\dim V_1=\dim V_2=p$ because $g$ is nontrivial. Lemma \ref{lemma: eigenspace decomposition induce split} shows that $F$ splits, and Lemma \ref{lemma: F-liftability and decompositions} gives $G_i\subseteq\Lin(F_i)$ for $i=1,2$. By Lemma \ref{lemma: liftability for split F}, one of $\Lin(F_1)$ and $\Lin(F_2)$ is liftable. The corresponding subgroup $G_i$ is therefore liftable, and Lemma \ref{lemma: liftability and decompositions} implies that $G$ is liftable.
\end{proof}

\begin{lem}
	\label{lemma: F-liftability for split F}
	Suppose that $ F $ admits a split $ V_{1} \oplus V_{2} $ and that $ \Lin(F_{1}) $ is liftable but not $ F_{1} $-liftable. Then:
	\begin{enumerate}
		\item If not all elements of $ \Lin(F_{2}) $ are $ F_{2} $-liftable, then $ \Lin(F) $ has an element that is not $F$-liftable. 
		\item If all elements of $\Lin(F_2)$ are $F_2$-liftable, but $\Lin(F_2)$ is not $F_2$-liftable, then $\Lin(F)$ has a $p$-subgroup of order at most $p^3$ that is not $F$-liftable.
	\end{enumerate}
\end{lem}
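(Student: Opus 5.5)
The plan is to build explicit matrices on $V=V_1\oplus V_2$ from the normal forms available in prime dimension, and then to detect the failure of $F$-liftability through a scalar discrepancy.

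\textbf{Preliminary reduction.} Since $\Lin(F_1)$ is liftable but not $F_1$-liftable, Lemma \ref{lemma: Sylow criterion of liftability} gives a Sylow $p$-subgroup of $\Lin(F_1)$ that is not $F_1$-liftable. This subgroup is liftable, so Proposition \ref{proposition: F-liftability when N=p} shows that some element of $\Lin(F_1)$ is not $F_1$-liftable. Lemma \ref{lemma: structure of non-F-liftable element} then yields $\varphi_1\in\pi_1^{-1}(\Lin(F_1))$ and a basis $e_0,\ldots,e_{p-1}$ of $V_1$ with
\[
\varphi_1(e_i)=\zeta^ie_i,\qquad \varphi_1^p=\I_{V_1},\qquad F_1\circ\varphi_1=\zeta F_1,
\]
where $\zeta$ is a primitive $p$-th root of unity. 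In particular $\varphi_1$ is not scalar.

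\textbf{Case (1).} Lemma \ref{lemma: structure of non-F-liftable element} applied to $\Lin(F_2)$ gives $\varphi_2$ with $\varphi_2^p=\I_{V_2}$ and $F_2\circ\varphi_2=\zeta'F_2$ for some primitive $p$-th root $\zeta'$. Replacing $\varphi_2$ by a suitable power, we may assume $\zeta'=\zeta$. Set $\varphi=\varphi_1\oplus\varphi_2$. Because $F=\operatorname{pr}_1^*F_1+\operatorname{pr}_2^*F_2$, we get $F\circ\varphi=\zeta F$. Moreover $\varphi^p=\I_V$ and $\varphi$ is not scalar, so $\pi(\varphi)\in\Lin(F)$ has order $p$. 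Every order-$p$ preimage of $\pi(\varphi)$ has the form $\alpha\varphi$ with $\alpha^p=1$. Since $d=p$, its multiplier is $\alpha^p\zeta=\zeta\neq1$. Hence $\pi(\varphi)$ is not $F$-liftable.

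\textbf{Case (2).} By Lemma \ref{lemma: Sylow criterion of liftability} and Proposition \ref{proposition: F-liftability when N=p}, some Sylow $p$-subgroup of $\Lin(F_2)$ is not liftable. Remark \ref{remark: structure of non-liftable G}(2) then gives $F_2$-preserving $\varphi_2,\psi_2$ with
\[
\varphi_2^p=\psi_2^p=\I_{V_2},\qquad [\varphi_2,\psi_2]=\zeta\I_{V_2}.
\]
If needed, we replace $\varphi_2$ by a power so that the root of unity here is the same $\zeta$ as above. Let $\xi$ be a primitive $p^2$-th root of unity with $\xi^p=\zeta$, and define
\[
\Phi=\varphi_1\oplus\xi\varphi_2,\qquad \Psi=\I_{V_1}\oplus\psi_2.
\]
Both matrices multiply $F$ by a scalar, namely $\zeta$ and $1$ respectively, so $\pi(\Phi),\pi(\Psi)\in\Lin(F)$. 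We have
\[
[\Phi,\Psi]=\I_{V_1}\oplus\zeta\I_{V_2},\qquad \Phi^p=\I_{V_1}\oplus\zeta\I_{V_2},\qquad \Psi^p=\I_V.
\]
Thus $C:=[\Phi,\Psi]$ is central, $\Phi^p=C$, and $\langle\Phi,\Psi\rangle$ has order at most $p^3$. Hence $H=\langle\pi(\Phi),\pi(\Psi)\rangle$ is a $p$-subgroup of $\Lin(F)$ of order at most $p^3$.

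It remains to show that $H$ is not $F$-liftable. Suppose $\widetilde H$ were an $F$-lifting of $H$.
\begin{enumerate}
\item The lift of $\pi(\Phi)$ in $\widetilde H$ is some $\alpha\Phi$ that preserves $F$. Its multiplier is $\alpha^p\zeta$, so $\alpha^p=\zeta^{-1}$.
\item The lift of $\pi(\Psi)$ is $\beta\Psi$. The commutator of these two lifts is $[\alpha\Phi,\beta\Psi]=C$, so $C\in\widetilde H$ is the lift of $\pi(C)$.
\item On the other hand, $(\alpha\Phi)^p=\zeta^{-1}C\in\widetilde H$ is also a lift of $\pi(\Phi)^p=\pi(C)$.
\end{enumerate}
Two distinct elements $C$ and $\zeta^{-1}C$ of $\widetilde H$ would then map to the same element of $H$, contradicting injectivity of $\pi$ on $\widetilde H$. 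So $H$ is not $F$-liftable.

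The main obstacle is case (2). The naive choice $\Phi=\varphi_1\oplus\varphi_2$ does not even lie in $\pi^{-1}(\Lin(F))$, because its multipliers on $F_1$ and $F_2$ differ. The remedy is the twist by $\xi$, which equalizes the multipliers but makes $\Phi$ of order $p^2$ with $\Phi^p$ equal to the commutator. The scalar-mismatch argument above then pins down the obstruction.
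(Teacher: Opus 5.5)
Your proposal is correct and follows the paper's proof essentially step for step. It uses the same preliminary reduction via the Sylow criterion and Proposition~\ref{proposition: F-liftability when N=p}, and the same block-diagonal element in case (1). In case (2) it uses the same pair of matrices up to a scalar: your $\Phi=\varphi_1\oplus\xi\varphi_2$ is $\xi$ times the paper's $F$-preserving $\xi^{-1}\varphi_1\oplus\varphi_2$. The only cosmetic difference is that your $\Phi$ has multiplier $\zeta$, so you must first record $\alpha^p=\zeta^{-1}$ before comparing $C$ with $(\alpha\Phi)^p=\zeta^{-1}C$. That comparison is exactly the paper's observation that $\zeta\,\mathrm{I}_V=\eta\varphi^{-p}$ would have to lie in the lifting.
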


\begin{proof}
	Since $\Lin(F_1)$ is not $F_1$-liftable, Lemma \ref{lemma: Sylow criterion of liftability} provides a Sylow $p$-subgroup $P_1\subseteq\Lin(F_1)$ that is not $F_1$-liftable. On the other hand, $P_1$ is liftable because $\Lin(F_1)$ is liftable. Proposition \ref{proposition: F-liftability when N=p} therefore shows that $P_1$ contains an element that is not $F_1$-liftable.

	Fix a primitive $p$-th root of unity $\zeta\in\KK$, and choose a primitive $p^{2}$-th root of unity $\xi\in\KK$ with $\xi^{p} = \zeta$. By Lemma \ref{lemma: structure of non-F-liftable element}, after replacing the resulting matrix by a suitable power, there exists $\varphi_1\in\pi_1^{-1}(P_1)$ such that $\varphi_1^p=\I_{V_1}$ and $F_1\circ\varphi_1=\zeta F_1$.
	
	(1) If not all elements of $\Lin(F_2)$ are $F_2$-liftable, Lemma \ref{lemma: structure of non-F-liftable element} gives, after the same normalization, a matrix $\varphi_2\in\pi_2^{-1}(\Lin(F_2))$ such that $\varphi_2^p=\I_{V_2}$ and $F_2\circ\varphi_2=\zeta F_2$. Define $\varphi\in\GL(V)$ by
	\begin{equation*}
		\varphi|_{V_{i}} = \varphi_{i},\quad i=1,2.
	\end{equation*}
	
	Then
	\begin{equation*}
		F \circ \varphi  = \zeta F.
	\end{equation*}
	
	Since $\varphi^p=\I_V$ and the multiplier $\zeta$ is nontrivial, $\pi(\varphi)$ has order $p$ and is not $F$-liftable.
	
	(2) If all elements of $\Lin(F_2)$ are $F_2$-liftable but $\Lin(F_2)$ is not $F_2$-liftable, Remark \ref{remark: structure of non-liftable G}(2), followed if necessary by replacing one matrix by a suitable power, gives $\varphi_2,\psi_2\in\pi_2^{-1}(\Lin(F_2))$ such that $[\varphi_2,\psi_2]=\zeta\I_{V_2}$, $\varphi_2^p=\psi_2^p=\I_{V_2}$, and $F_2\circ\varphi_2=F_2\circ\psi_2=F_2$. Define $\varphi,\psi\in\GL(V)$ by
	\begin{equation*}
		\begin{aligned}
			&\varphi|_{V_{1}} = \xi^{-1} \varphi_{1}, \varphi|_{V_{2}} =  \varphi_{2};\\ &\psi|_{V_{1}} = \I_{V_{1}}, \psi|_{V_{2}} = \psi_{2}. 
		\end{aligned}
	\end{equation*}
	Then $ F\circ \varphi = F \circ \psi = F $ and $ \varphi^{p^{2}} = \psi^{p} = \I_{V} $. Furthermore, letting $ \eta = [\varphi,\psi] $, we have
	\begin{equation*}
		\eta|_{V_{1}} = \I_{V_{1}}, \quad \eta|_{V_{2}} = \zeta \I_{V_{2}}.
	\end{equation*}
	Thus
	\begin{equation*}
		\eta^{p} = \I_{V},\quad [\varphi,\eta] = [\psi,\eta] = \I_{V}, \quad \varphi^{p} = \zeta^{-1}\eta.
	\end{equation*}
	
	Let $G$ be the subgroup of $\Lin(F)$ generated by $\pi(\varphi)$ and $\pi(\psi)$. Let $ H $ be the normal subgroup of $ G $ generated by $ \pi(\eta) $. Then $ |H| = p $, while $G/H$ is abelian and generated by two elements of order at most $p$. Thus $|G/H|\leq p^2$ and $|G| \leq p^{3}$. 
	
	If $G$ had an $F$-lifting $\widetilde{G}$, let $\widetilde{\varphi},\widetilde{\psi},\widetilde{\eta}$ be the elements corresponding to $\pi(\varphi),\pi(\psi),\pi(\eta)$, respectively. Then $ \widetilde{\eta} = [\widetilde{\varphi},\widetilde{\psi}] = [\varphi,\psi] = \eta$. Moreover, $ \widetilde{\varphi} $ and $\varphi$ differ by a $p$-th root of unity because both preserve $F$, and hence $ \varphi^{p} = \widetilde{\varphi}^{p} $. It follows that $\zeta\I_{V} = \eta\varphi^{-p} = \widetilde{\eta}\widetilde{\varphi}^{-p}$ belongs to $\widetilde G$, contradicting the fact that a lifting contains no nontrivial scalar. Thus $G$ is a non-$F$-liftable $p$-subgroup of order at most $p^3$. 
\end{proof}

\begin{cor}
	\label{corollary: F-liftability for split F}
	Suppose that $ F $ admits a split $ V_{1}\oplus V_{2} $. If all $ p $-subgroups of $ \Lin(F) $ of order $ \le p^{3} $ are $ F $-liftable, then either $ \Lin(F_{1}) $ is $ F_{1} $-liftable, or $ \Lin(F_{2}) $ is $ F_{2} $-liftable. 
\end{cor}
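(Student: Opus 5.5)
We argue by contradiction: assume that neither $\Lin(F_1)$ is $F_1$-liftable nor $\Lin(F_2)$ is $F_2$-liftable, and produce a $p$-subgroup of $\Lin(F)$ of order at most $p^3$ that is not $F$-liftable. Every $F$-liftable group is liftable, so the hypothesis implies that all abelian $p$-subgroups of order at most $p^3$ are liftable. Lemma \ref{lemma: liftability for split F} then shows that one of $\Lin(F_1)$, $\Lin(F_2)$ is liftable. Since the split is symmetric, we may relabel so that $\Lin(F_1)$ is liftable. By our assumption, $\Lin(F_1)$ is nevertheless not $F_1$-liftable. This places us exactly in the setting of Lemma \ref{lemma: F-liftability for split F}.

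We next split into cases according to the nature of the obstruction for $F_2$, following the dichotomy of Remark \ref{remark: structure of non-liftable G}.

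\emph{Case 1: some element of $\Lin(F_2)$ is not $F_2$-liftable.} Lemma \ref{lemma: F-liftability for split F}(1) gives an element $g\in\Lin(F)$ that is not $F$-liftable, and its construction gives $\pi(\varphi)$ of order $p$. Even without using that, we can reduce to order $p$. Apply Lemma \ref{lemma: Sylow criterion of liftability} to the cyclic group $\langle g\rangle$ (where $p$ is the only prime dividing $\gcd(2p,p)$): its Sylow $p$-subgroup is not $F$-liftable, so some $p$-power-order element is not $F$-liftable. By the remarks at the start of Section \ref{section: N=2p and d=p}, which follow from Theorem \ref{theorem: order of elements in Lin(F)}, such an element has order $p$. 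The cyclic group it generates is then a $p$-subgroup of order $p\le p^3$ that is not $F$-liftable, contradicting the hypothesis.

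\emph{Case 2: every element of $\Lin(F_2)$ is $F_2$-liftable.} Since $\Lin(F_2)$ is not $F_2$-liftable by assumption, Lemma \ref{lemma: F-liftability for split F}(2) directly gives a $p$-subgroup of $\Lin(F)$ of order at most $p^3$ that is not $F$-liftable, again a contradiction.

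The only delicate point is the reduction at the start. We must make sure that Lemma \ref{lemma: liftability for split F} really applies, which requires that all abelian $p$-subgroups of order $\le p^3$ are liftable, and this follows from $F$-liftability. We must also handle the relabeling correctly: if both $\Lin(F_i)$ are liftable, we simply choose $i=1$. The two cases are exhaustive, so no further work is needed.
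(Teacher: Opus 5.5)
Your proof is correct and follows the paper's argument essentially verbatim. You reduce via Lemma \ref{lemma: liftability for split F} to the case where $\Lin(F_1)$ is liftable but not $F_1$-liftable, then apply parts (1) and (2) of Lemma \ref{lemma: F-liftability for split F} according to whether $\Lin(F_2)$ contains a non-$F_2$-liftable element; your extra reduction to order $p$ in Case 1, via the Sylow criterion and Theorem \ref{theorem: order of elements in Lin(F)}, is valid but unnecessary, since part (1) already produces an element of order $p$.
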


\begin{proof}
	Suppose that neither $\Lin(F_1)$ nor $\Lin(F_2)$ is $F_i$-liftable. The hypothesis implies in particular that every abelian $p$-subgroup of $\Lin(F)$ of order at most $p^3$ is liftable. Lemma \ref{lemma: liftability for split F} therefore shows that at least one of $\Lin(F_1)$ and $\Lin(F_2)$ is liftable; after interchanging the indices, assume that $\Lin(F_1)$ is liftable but not $F_1$-liftable. If $\Lin(F_2)$ contains an element that is not $F_2$-liftable, Lemma \ref{lemma: F-liftability for split F}(1) produces a non-$F$-liftable element of order $p$ in $\Lin(F)$, contrary to the hypothesis. Otherwise every element of $\Lin(F_2)$ is $F_2$-liftable, and Lemma \ref{lemma: F-liftability for split F}(2) produces a non-$F$-liftable $p$-subgroup of order at most $p^3$, again a contradiction.
\end{proof}

\begin{thm}
	\label{main theorem: subgroup criterion of F-liftability when N=2p}
	Let $p \geq 3$ be a prime, let $V$ be a $2p$-dimensional $\KK$-vector space, and let $F \in S^{p}(V^{*})$ be a nonsingular homogeneous form of degree $p$. Then $\Lin(F)$ is $F$-liftable if and only if all its $p$-subgroups of order at most $p^{3}$ are $F$-liftable, or equivalently, $\Lin(F)$ satisfies the property $\mathrm{FL}(p,3)$.
\end{thm}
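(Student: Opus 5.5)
Only the sufficiency needs proof, and we follow the proof of Theorem \ref{main theorem: subgroup criterion of liftability when N=2p}, replacing liftability by $F$-liftability at each step.

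First reduce to $p$-groups. Since $\gcd(2p,p)=p$, Lemma \ref{lemma: Sylow criterion of liftability}(2) shows it suffices to prove that a Sylow $p$-subgroup $G$ of $\Lin(F)$ is $F$-liftable; assume $G$ is nontrivial. By Theorem \ref{theorem: order of elements in Lin(F)}, every element of $G$ has order at most $p^2$. Moreover, an element that is not $F$-liftable has order $p$.

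Next, use a central element to split $V$. Choose $g$ of order $p$ in the center of $G$. For every $h\in G$, the group $\langle g,h\rangle$ is abelian of order at most $p^3$, hence $F$-liftable and in particular liftable. By Lemma \ref{lemma: the space decomposition}, the eigenspaces of any preimage of $g$ are $\pi^{-1}(G)$-invariant. Since $g$ itself is $F$-liftable, we may take an $F$-preserving lift $\widetilde g$ of order $p$. Its eigenspace decomposition $V=V_1\oplus\cdots\oplus V_k$ has $k\ge2$. By Lemma \ref{lemma: decomposition of F}, each restriction $F_i=F|_{V_i}$ is nonsingular, hence nonzero. Lemma \ref{lemma: F-liftability and decompositions} then gives $G_i\subseteq\Lin(F_i)$, and it suffices to show that some $G_i$ is $F_i$-liftable. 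Two cases arise.

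\emph{Case $p\nmid\dim V_i$ for some $i$.} Then $p\nmid\gcd(\dim V_i,p)$, so Corollary \ref{corollary: liftability of p groups when gcd(d,N,p)=1}(2) shows the $p$-group $G_i$ is $F_i$-liftable, and we are done.

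\emph{Case $p\mid\dim V_i$ for all $i$.} Then $k=2$ and $\dim V_1=\dim V_2=p$. Lemma \ref{lemma: eigenspace decomposition induce split} shows that $V_1\oplus V_2$ is a split of $F$. Corollary \ref{corollary: F-liftability for split F}, applied with the hypothesis $\mathrm{FL}(p,3)$, shows that some $\Lin(F_i)$ is $F_i$-liftable. Its subgroup $G_i$ is then $F_i$-liftable, and Lemma \ref{lemma: F-liftability and decompositions} concludes.

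The main obstacle is already handled by Corollary \ref{corollary: F-liftability for split F}: it converts the failure of both $\Lin(F_i)$ to be $F_i$-liftable into an explicit non-$F$-liftable subgroup of order at most $p^3$. The one point to check carefully is the choice of $\widetilde g$. Lemma \ref{lemma: decomposition of F} requires an $F$-preserving element of order $p$. This is available precisely because $\langle g\rangle$ is $F$-liftable by hypothesis: an $F$-lifting of $\langle g\rangle$ provides $\widetilde g$ with $\widetilde g^p=\I_V$ and $F\circ\widetilde g=F$. Without that choice, the restrictions $F_i$ could vanish. Its eigenspaces coincide with those of any other preimage of $g$, so the invariance statement from Lemma \ref{lemma: the space decomposition} still applies.
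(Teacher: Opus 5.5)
Your proposal is correct and follows the paper's proof essentially step for step: reduce to a Sylow $p$-subgroup via the Sylow criterion, and decompose $V$ into the eigenspaces of an $F$-lifting of a central element of order $p$ (available by $\mathrm{FL}(p,3)$). You then conclude either by Corollary~\ref{corollary: liftability of p groups when gcd(d,N,p)=1}(2) or, in the $p+p$ case, by Lemma~\ref{lemma: eigenspace decomposition induce split} and Corollary~\ref{corollary: F-liftability for split F}, exactly as the paper does. One small wording point: the eigenspaces, and hence the $F_i$, do not depend on which preimage of $g$ you take, so ``without that choice the $F_i$ could vanish'' is misleading. What the hypothesis actually buys is that $g$ is $F$-liftable, which is what makes Lemma~\ref{lemma: decomposition of F} applicable.
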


\begin{proof}
	Only the sufficiency requires proof. By Lemma \ref{lemma: Sylow criterion of liftability}, it suffices to prove that a Sylow $ p $-subgroup $ G $ of $ \Lin(F) $ is $ F $-liftable. 
	If $G$ is trivial, there is nothing to prove.

	Choose an element $g$ of order $ p $ in the center of $G$. For every $h\in G$, the order bound gives $\ord(h)\leq p^2$, so $\langle g,h\rangle$ is an abelian subgroup of order at most $p^3$ and is liftable by hypothesis. The same hypothesis applied to $\langle g\rangle$ gives an $F$-lifting $\widetilde g$ of $g$. Consider its eigenspace decomposition:
	\begin{equation*}
		V = V_{1} \oplus \cdots \oplus V_{k}.
	\end{equation*} 
	By Lemma \ref{lemma: the space decomposition}, all $ V_{i} $ are $\pi^{-1}(G)$-invariant. Define $ G_{i} \subseteq \PGL(V_{i}) $ as in Lemma \ref{lemma: liftability and decompositions} and let $ F_{i} $ be the restriction of $ F $ to $ V_{i} $.
	
	If $p\nmid\dim V_i$ for some $i$, Lemma \ref{lemma: decomposition of F} shows that $F_i$ is nonsingular, and Lemma \ref{lemma: F-liftability and decompositions} gives $G_i\subseteq\Lin(F_i)$. Corollary \ref{corollary: liftability of p groups when gcd(d,N,p)=1}(2) then shows that $G_i$ is $F_i$-liftable, and Lemma \ref{lemma: F-liftability and decompositions} implies that $G$ is $F$-liftable.
	
	Otherwise, $ k=2 $ and $ \dim V_{1} = \dim V_{2} = p $ because $ g $ is nontrivial. Lemma \ref{lemma: eigenspace decomposition induce split} shows that $F$ splits, and Lemma \ref{lemma: F-liftability and decompositions} gives $G_i\subseteq\Lin(F_i)$ for $i=1,2$. Corollary \ref{corollary: F-liftability for split F} shows that one of $\Lin(F_1)$ and $\Lin(F_2)$ is $F_i$-liftable, so the corresponding $G_i$ is $F_i$-liftable. Lemma \ref{lemma: F-liftability and decompositions} now implies that $G$ is $F$-liftable.
\end{proof}

\subsection{Cubic Forms in Six Variables}
The bounds in Theorem \ref{main theorem: subgroup criterion of liftability when N=2p} and Theorem \ref{main theorem: subgroup criterion of F-liftability when N=2p} are not optimal for $p=3$. In this subsection we prove Theorem \ref{main theorem: cubic fourfold criteria}, lowering the order bound from $p^{3}=27$ to $p^{2}=9$. The following lemma captures the special feature of this case. Throughout the subsection, fix a primitive ninth root of unity $\xi\in\KK$ and set $\zeta=\xi^3$, a primitive cube root of unity.

\begin{lem}
		\label{lemma: special case p=3}
	Let $V$ be a $3$-dimensional $\KK$-vector space and let $F\in S^3(V^*)$ be nonsingular. If not all elements of $\Lin(F)$ are $F$-liftable, then there exist $\varphi,\psi\in\GL(V)$ of order $3$ such that $F\circ\varphi=F\circ\psi=F$ and $\psi\varphi=\zeta\varphi\psi$.
\end{lem}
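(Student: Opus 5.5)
By Lemma \ref{lemma: structure of non-F-liftable element}, applied with $p=d=3$, the hypothesis produces a matrix $\varphi_0\in\pi^{-1}(\Lin(F))$ and a basis $e_0,e_1,e_2$ of $V$ such that $\varphi_0(e_i)=\zeta^ie_i$ and $F\circ\varphi_0=\zeta F$. By Remark \ref{remark: structure of non-liftable G}(1), $F$ is then a linear combination of the cubic monomials $x_0^{\alpha_0}x_1^{\alpha_1}x_2^{\alpha_2}$ with $\alpha_1+2\alpha_2\equiv1\pmod 3$. These monomials are
\[
x_0^2x_1,\quad x_1^2x_2,\quad x_2^2x_0 .
\]
Indeed, the weights of the ten cubic monomials are $x_0^3\colon0$, $x_1^3\colon0$, $x_2^3\colon0$, $x_0x_1x_2\colon0$, $x_0^2x_1\colon1$, $x_0^2x_2\colon2$, $x_1^2x_0\colon2$, $x_1^2x_2\colon1$, $x_2^2x_0\colon1$, $x_2^2x_1\colon2$. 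Hence
\[
F=a\,x_0^2x_1+b\,x_1^2x_2+c\,x_2^2x_0 .
\]
Nonsingularity forces $abc\neq0$: if, say, $a=0$, the coordinate point $e_0$ is singular.

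Next, rescale the coordinates diagonally so that $F=x_0^2x_1+x_1^2x_2+x_2^2x_0$, the Klein-type cubic. This requires solving a small monomial system for the scaling factors, which is always possible over an algebraically closed field. A rescaling is diagonal, so it does not change the shape of $\varphi_0$. For this normalized $F$ we write down the two matrices directly:
\[
\varphi\colon(x_0,x_1,x_2)\mapsto(x_1,x_2,x_0),\qquad \psi=\diag(\zeta^{a_0},\zeta^{a_1},\zeta^{a_2}).
\]
Here $\varphi$ is the cyclic permutation, which preserves $F$ and has order $3$. For $\psi$ we choose exponents making each of the three monomials invariant, i.e.\ $2a_0+a_1\equiv 2a_1+a_2\equiv 2a_2+a_0\equiv0\pmod 3$, and not all equal. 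A solution is $(a_0,a_1,a_2)=(1,1,1)$ corrected to be nonscalar, namely $\psi=\diag(1,\zeta,\zeta^2)$. This choice gives $\psi^*x_0^2x_1=\zeta x_0^2x_1$, which is not invariant, so it must be adjusted by multiplying by $\xi^{-1}$-type scalars. Since $\xi^{-3}=\zeta^{-1}$ has cube $1$ only in the form $\xi^{-1}\I$ with $(\xi^{-1})^3=\zeta^{-1}$, the adjusted matrix $\xi^{-1}\diag(1,\zeta,\zeta^2)$ preserves $F$. Its cube is $\zeta^{-1}\I$, so it has order $9$, not $3$. The diagonal choice therefore needs more care, which is the main obstacle.

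To handle this, solve the congruences honestly. The congruences $a_1\equiv a_0$, $a_2\equiv a_1$, $a_0\equiv a_2$ (mod $3$, using $2a\equiv -a$) force $a_0=a_1=a_2$, so every diagonal $F$-preserving matrix of order $3$ is scalar. The strategy therefore swaps roles. Take $\psi$ to be the cyclic permutation $\sigma$, which preserves $F$ and has order $3$. For the other matrix, look for an $F$-preserving order-$3$ element $\varphi$ with $\sigma\varphi\sigma^{-1}=\zeta^{\pm1}\varphi$, using the eigenvectors of $\sigma$: set $y_j=x_0+\zeta^{j}x_1+\zeta^{2j}x_2$. In these coordinates $\sigma$ is diagonal, and we would take $\varphi$ to be the cyclic permutation of the $y_j$. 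This $\varphi$ commutes with $\sigma$ only up to $\zeta$, so the relation $\psi\varphi=\zeta\varphi\psi$ holds after possibly replacing $\varphi$ by $\varphi^2$. The work then reduces to one explicit verification: that $F$, rewritten in the $y$-coordinates, is invariant under cyclically permuting the $y_j$. I would carry this out by expanding $F$ in the $y$-basis, checking that only terms whose $\sigma$-weight is $0$ survive, and checking that their coefficients are cyclically symmetric. If that symmetry fails, I would instead search for $\varphi$ inside the finite normalizer of $\langle\sigma\rangle$ in the $F$-preserving group. Either way, the key point to verify is that the $F$-preserving group of the Klein cubic contains a Heisenberg group of order $27$ with exponent $3$; since $p=3$ is odd, a Heisenberg group of this kind has exponent $3$.
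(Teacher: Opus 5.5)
Your reduction to $F=x_0^2x_1+x_1^2x_2+x_2^2x_0$, your choice of the cyclic permutation $\sigma$, and your passage to the Fourier coordinates $y_j=x_0+\zeta^jx_1+\zeta^{2j}x_2$ all match the paper. The step you reduce everything to, however, is false. In your coordinates
\[
F=\tfrac19\bigl(y_0^3+\zeta^2y_1^3+\zeta y_2^3\bigr),
\]
so the coefficients are not cyclically symmetric. The plain cyclic shift $P$ of the $y_j$ therefore satisfies $F\circ P=\zeta^{\pm1}F$, not $F\circ P=F$.

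The $F$-preserving lifts $cP$ of $\pi(P)$ require $c^3=\zeta^{\mp1}$, so $(cP)^3=c^3\,\I_V$ and they have order $9$. This is exactly the obstruction you already met for diagonal matrices. Thus $\pi(P)$ is itself not $F$-liftable, and the $F$-preserving preimage of $\langle\pi(\sigma),\pi(P)\rangle\simeq(\ZZ/3\ZZ)^2$ contains elements of order $9$.

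Your fallback does not close the gap. ``Search the normalizer'' is not an argument. The fact that the Heisenberg group of order $27$ has exponent $3$ says nothing about whether the $F$-preserving lifts of a given projective $(\ZZ/3\ZZ)^2$ have order $3$; for the subgroup you picked they do not. Asserting that the $F$-preserving group contains such a Heisenberg group is just a restatement of the lemma.

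The missing idea is one more diagonal rescaling, now in the $y$-coordinates. There $F$ is a diagonal cubic, so put $w_0=y_0$, $w_1=\xi^2y_1$, $w_2=\xi y_2$; this gives $F=\tfrac19(w_0^3+w_1^3+w_2^3)$. The cyclic permutation $\tau$ of the $w_j$ has order $3$ and preserves $F$. Meanwhile $\sigma$ stays diagonal with $\sigma^*w_j=\zeta^{-j}w_j$, so $\sigma\tau=\zeta^{\pm1}\tau\sigma$. After possibly replacing one matrix by its square, you get $\psi\varphi=\zeta\varphi\psi$.

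In your $y$-coordinates, $\tau$ is a \emph{weighted} shift:
\[
\tau^*y_0=\xi^2y_1,\qquad \tau^*y_1=\xi^{-1}y_2,\qquad \tau^*y_2=\xi^{-1}y_0 .
\]
Each weight cubed compensates a coefficient of $F$, and the product of the weights is $1$, which is what forces order $3$. This is precisely the paper's construction. In its coordinates, dual to $v_i=e_0+\zeta^ie_1+\zeta^{2i}e_2$, one has $F=3y_0^3+3\zeta y_1^3+3\zeta^2y_2^3$. The paper takes $\varphi=\sigma$ and $\psi(v_0)=\xi^{-2}v_2$, $\psi(v_1)=\xi v_0$, $\psi(v_2)=\xi v_1$, which is $\tau^2$ in your notation.
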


\begin{proof}
	By Lemma \ref{lemma: structure of non-F-liftable element}, after replacing the resulting matrix by its square if necessary, there exists $A\in\GL(V)$ such that $F\circ A=\zeta F$ and there is a basis $\{e_0,e_1,e_2\}$ of $V$ such that
	\begin{equation*}
		A(e_{i}) = \zeta^{i}e_{i}
	\end{equation*} 
	
	Let $\{x_0,x_1,x_2\}$ be the dual basis of $\{e_0,e_1,e_2\}$. Since $x_j\circ A=\zeta^j x_j$ and $F\circ A=\zeta F$, the form $F$ must be
	\begin{equation*}
		F=\lambda_0x_0^2x_1+\lambda_1x_1^2x_2+\lambda_2x_2^2x_0.
	\end{equation*}
	Nonsingularity implies that all $\lambda_j$ are nonzero. The map
	\begin{equation*}
		(a_0,a_1,a_2)\longmapsto(a_0^2a_1,a_1^2a_2,a_2^2a_0)
	\end{equation*}
	is an isogeny of $(\KK^\times)^3$: its exponent matrix has determinant $9$. It is therefore surjective because $\KK$ is algebraically closed. After a diagonal change of basis, we may consequently assume that $\lambda_0=\lambda_1=\lambda_2=1$. For $i=0,1,2$, set
	\begin{equation*}
	v_{i} = e_{0} + \zeta^{i}e_{1}+\zeta^{2i}e_{2}.
	\end{equation*}
	
	Let $\{y_0,y_1,y_2\}$ be the dual basis of $\{v_0,v_1,v_2\}$. Then
	\begin{equation*}
		x_{i} = y_{0} + \zeta^{i}y_{1}+\zeta^{2i}y_{2}.
	\end{equation*}
	
	A direct computation gives
	\begin{equation*}
		F = x_{0}^{2}x_{1}+x_{1}^{2}x_{2}+x_{2}^{2}x_{0} = 3y_{0}^{3} + 3\zeta y_{1}^{3} +3\zeta^{2} y_{2}^{3}.
	\end{equation*}
	
	Consider the following linear maps $\varphi,\psi \in \GL(V)$:
	\begin{equation*}
		\begin{aligned}
			&\varphi(v_{0})=v_{0},\ \varphi(v_{1})=\zeta v_{1},\ \varphi(v_{2})=\zeta^{2}v_{2};\\
			&\psi(v_{0})=\xi^{-2} v_{2},\ \psi(v_{1})= \xi v_{0},\ \psi(v_{2})=\xi v_{1}.\\
		\end{aligned}
	\end{equation*}
	
	A direct verification gives $\varphi^3=\psi^3=\I_V$, $\psi\varphi=\zeta\varphi\psi$, and $F\circ\varphi=F\circ\psi=F$.
\end{proof}

\begin{prop}
	\label{proposition: liftability criterion when N=d=3}
	Let $V$ be a $3$-dimensional $\KK$-vector space and let $F\in S^3(V^*)$ be nonsingular. Then the following conditions are equivalent:
	\begin{enumerate}
		\item $ \Lin(F) $ is not liftable.
		\item $ \Lin(F) $ is not $ F $-liftable.
		\item There exist $\varphi,\psi\in\GL(V)$ of order $3$ such that $F \circ \varphi = F \circ \psi =F$ and $\psi\varphi = \zeta \varphi\psi$.
	\end{enumerate} 
\end{prop}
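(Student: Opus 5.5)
We prove the cycle of implications (3)$\Rightarrow$(1)$\Rightarrow$(2)$\Rightarrow$(3). Throughout, $N=d=3$ and $\Lin(F)$ is finite; this is the polarized elliptic curve case recalled in Section~\ref{section: pre}.

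\emph{(3)$\Rightarrow$(1).} Given $\varphi,\psi$ as in (3), the projective images $\pi(\varphi),\pi(\psi)$ commute, since their commutator $\zeta^{-1}\I_V$ is scalar. The relation $\psi\varphi=\zeta\varphi\psi$ is unchanged when either matrix is multiplied by a scalar. Hence no lifting of $\langle\pi(\varphi),\pi(\psi)\rangle$ can contain commuting preimages, so this subgroup is not liftable. Therefore $\Lin(F)$ is not liftable. Alternatively, this is immediate from Corollary~\ref{corollary: structure of unliftable G when N=p} with $p=3$.

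\emph{(1)$\Rightarrow$(2).} This is trivial, because an $F$-lifting is in particular a lifting.

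\emph{(2)$\Rightarrow$(3).} This is the substantive direction, and we split it into two cases according to Remark~\ref{remark: structure of non-liftable G} with $p=3$ and $G=\Lin(F)$.
\begin{itemize}
\item[(a)] Suppose not every element of $\Lin(F)$ is $F$-liftable. Then Lemma~\ref{lemma: special case p=3} directly produces $\varphi,\psi$ with the properties in (3).
\item[(b)] Suppose every element is $F$-liftable, yet $\Lin(F)$ is not $F$-liftable. Remark~\ref{remark: structure of non-liftable G}(2) gives $\varphi,\psi\in\pi^{-1}(\Lin(F))$ with $F\circ\varphi=F\circ\psi=F$ and $\varphi\psi=\zeta'\psi\varphi$ for a primitive cube root of unity $\zeta'$. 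In the explicit basis of that remark, $\varphi=\diag(1,\zeta',\zeta'^2)$ and $\psi$ is the cyclic permutation, so $\varphi^3=\psi^3=\I_V$. Both have order exactly $3$, since neither is scalar.
\end{itemize}

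In case (b) it remains to match the commutation relation in (3), namely $\psi\varphi=\zeta\varphi\psi$. The given relation is equivalent to $\psi\varphi=\zeta'^{-1}\varphi\psi$. If $\zeta'^{-1}=\zeta$ we are done. Otherwise $\zeta'^{-1}=\zeta^2$, and we replace $\psi$ by $\psi^2$. This preserves order $3$ and $F$-invariance, and the conjugation rule gives $\psi^2\varphi=\zeta^4\varphi\psi^2=\zeta\varphi\psi^2$.

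The main point needing care is that Remark~\ref{remark: structure of non-liftable G} is stated for a finite $G\subseteq\Lin(F)$ that is not $F$-liftable. We apply it to the finite group $G=\Lin(F)$ with $p=3$ and $d=3$; its proof runs through Lemma~\ref{lemma: Sylow criterion of liftability}, Proposition~\ref{proposition: F-liftability when N=p} and Corollary~\ref{corollary: structure of unliftable G when N=p}, all of which apply here. The only remaining checks are routine bookkeeping: that the root of unity can be normalized to the fixed $\zeta$, and that the matrices have order exactly $3$.
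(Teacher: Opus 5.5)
Your proof is correct and follows essentially the paper's argument: the same cycle (3)$\Rightarrow$(1)$\Rightarrow$(2)$\Rightarrow$(3), with the case of a non-$F$-liftable element handled by Lemma~\ref{lemma: special case p=3} and the remaining case handled by the nonliftable $(\ZZ/3\ZZ)^2$ coming from the prime-dimension theory. The difference is only in packaging. The paper argues (2)$\Rightarrow$(3) by contradiction, using Theorem~\ref{main theorem: subgroup criterion for F-liftability when N=p} and Corollary~\ref{corollary: structure of unliftable G when N=p}, and checks directly that the order-three lifts preserve $F$ because they differ from $F$-liftings by cube roots of unity. You instead invoke Remark~\ref{remark: structure of non-liftable G}(2), which encodes exactly that chain, and then normalize the root of unity by replacing $\psi$ with $\psi^2$.
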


\begin{proof}
		Condition (3) implies (1), because the projective images of $\varphi$ and $\psi$ generate a nonliftable subgroup isomorphic to $(\ZZ/3\ZZ)^2$, and (1) implies (2). It remains to prove that (2) implies (3). Suppose, to the contrary, that (3) fails. By Lemma \ref{lemma: special case p=3}, every element of $\Lin(F)$ is $F$-liftable. Since $\Lin(F)$ is not $F$-liftable, Theorem \ref{main theorem: subgroup criterion for F-liftability when N=p} yields a nonliftable subgroup isomorphic to $(\ZZ/3\ZZ)^2$. Corollary \ref{corollary: structure of unliftable G when N=p} then gives order-three lifts $\varphi,\psi$ satisfying $\psi\varphi=\zeta\varphi\psi$. Each differs from an $F$-lifting of its projective image by a third root of unity, which acts trivially on the cubic form. Hence $F\circ\varphi=F\circ\psi=F$, a contradiction.
\end{proof}

For the remainder of this subsection, let $V$ be a $6$-dimensional $\KK$-vector space and let $F\in S^3(V^*)$ be nonsingular.

\begin{lem}
	\label{lemma: F-liftability for split F when p=3}
	Suppose that $F$ admits a split $V_1\oplus V_2$. If all abelian $3$-subgroups of $\Lin(F)$ of order at most $9$ are liftable, then either $\Lin(F_1)$ is $F_1$-liftable or $\Lin(F_2)$ is $F_2$-liftable.
\end{lem}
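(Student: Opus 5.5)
We argue by contradiction: assume neither $\Lin(F_1)$ is $F_1$-liftable nor $\Lin(F_2)$ is $F_2$-liftable. By Proposition \ref{proposition: liftability criterion when N=d=3}, for each $i=1,2$ there exist $\varphi_i,\psi_i\in\GL(V_i)$ of order $3$ with $F_i\circ\varphi_i=F_i\circ\psi_i=F_i$ and $\psi_i\varphi_i=\zeta\varphi_i\psi_i$. The point of the cubic case is exactly this: non-$F$-liftability already forces a Heisenberg pair of \emph{$F_i$-preserving} order-$3$ matrices. In the general odd-prime case (Lemma \ref{lemma: liftability for split F}) one had to twist by $\xi^{-\alpha_i}$, which produced elements of order $p^2$; here no twist is needed.

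Next we glue the two pairs. Define $\varphi,\psi\in\GL(V)$ by $\varphi|_{V_i}=\varphi_i$ and $\psi|_{V_i}=\psi_i$ for $i=1,2$. Since $F=\operatorname{pr}_1^*F_1+\operatorname{pr}_2^*F_2$ and each block preserves its $F_i$, both matrices preserve $F$, so $\pi(\varphi),\pi(\psi)\in\Lin(F)$. Moreover $\varphi^3=\psi^3=\I_V$, and the commutator acts as $\zeta\I_{V_1}$ on $V_1$ and as $\zeta\I_{V_2}$ on $V_2$, hence equals $\zeta\I_V$. Therefore $\pi(\varphi)$ and $\pi(\psi)$ commute and each has order dividing $3$. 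Neither is trivial and neither is a power of the other, because otherwise the commutator of their lifts would be $\I_V$. Thus $H=\langle\pi(\varphi),\pi(\psi)\rangle\simeq(\ZZ/3\ZZ)^2$, an abelian $3$-subgroup of order $9$.

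Finally, $H$ is not liftable. A commutator of lifts is unchanged by scalar rescaling, so any preimages of $\pi(\varphi)$ and $\pi(\psi)$ have commutator $\zeta\I_V\neq\I_V$. Hence no lifting of $H$, which would have to be abelian, can exist. This contradicts the hypothesis and proves the statement.

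The only step needing care is keeping the glued commutator scalar on all of $V$, which requires the \emph{same} $\zeta$ on both blocks. If Proposition \ref{proposition: liftability criterion when N=d=3} gives the pair on $V_2$ with the relation for $\zeta^2$ instead, we replace $\psi_2$ by $\psi_2^2$. This keeps the order equal to $3$ and preserves $F_2$, and it converts the relation to $\zeta$.
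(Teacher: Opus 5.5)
Your proposal is correct and follows essentially the same route as the paper's proof: apply Proposition \ref{proposition: liftability criterion when N=d=3} on each $V_i$ to obtain $F_i$-preserving order-$3$ pairs with relation $\zeta$, glue them blockwise, and observe that the resulting non-liftable $(\ZZ/3\ZZ)^2$ contradicts the hypothesis. Your final paragraph about converting $\zeta^2$ to $\zeta$ is unnecessary, because that proposition already fixes the same $\zeta$ for both blocks, but it is harmless.
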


\begin{proof}
	Suppose, to the contrary, that $ \Lin(F_{i}) $ is not $ F_{i} $-liftable for $ i=1,2 $. By Proposition \ref{proposition: liftability criterion when N=d=3}, there exist $\varphi_{i},\psi_{i}\in\GL(V_{i})$ of order $3$ such that $F_{i} \circ \varphi_{i} = F_{i} \circ \psi_{i} =F_{i}$ and $\psi_{i}\varphi_{i} = \zeta \varphi_{i}\psi_{i}$. Define $\varphi,\psi\in\GL(V)$ by
	\begin{equation*}
		\begin{aligned}
			&\varphi|_{V_{1}} =  \varphi_{1}, \varphi|_{V_{2}} = \varphi_{2};\\ &\psi|_{V_{1}} =  \psi_{1}, \psi|_{V_{2}} = \psi_{2}. 
		\end{aligned}
	\end{equation*}
	
	Then $F\circ\varphi=F\circ\psi=F$, $\varphi^3=\psi^3=\I_V$, and $\psi\varphi=\zeta\varphi\psi$. Their projective images generate a nonliftable subgroup isomorphic to $(\ZZ/3\ZZ)^2$, contradicting the hypothesis.
\end{proof}

\begin{proof}[Proof of Theorem \ref{main theorem: cubic fourfold criteria}(1)]
	Only the sufficiency requires proof. Suppose that every $3$-subgroup of order at most $9$ is liftable but $\Lin(F)$ is not. By Theorem \ref{main theorem: subgroup criterion of liftability when N=2p}, there is a nonliftable abelian $3$-subgroup $H\subseteq\Lin(F)$ of order at most $27$. The hypothesis gives $|H|=27$, and Theorem \ref{theorem: order of elements in Lin(F)} excludes elements of order $27$.

	We claim that $H$ is not isomorphic to $(\ZZ/3\ZZ)^3$. Indeed, in that case any two elements of $H$ generate a subgroup $L$ of order at most $9$ and hence a liftable subgroup. If $\widetilde L$ is a lifting of $L$, then $\pi^{-1}(L)=\KK^\times\widetilde L$ is abelian. Thus arbitrary lifts of any two elements of $H$ commute, and $\pi^{-1}(H)$ is abelian. Commuting order-three lifts of a basis of $H$ would then generate a lifting of $H$, a contradiction. Therefore
	\begin{equation*}
		H\simeq\ZZ/9\ZZ\times\ZZ/3\ZZ.
	\end{equation*}
		Choose generators $a,b$ of orders $9$ and $3$. By Theorem \ref{theorem: order of elements in Lin(F)}, the element $a$ is $F$-liftable; choose an order-nine lift $\varphi$ such that $F\circ\varphi=F$. The subgroup $\langle b\rangle$ is liftable by the hypothesis, so choose an order-three lift $\psi$ of $b$. If these lifts commuted, they would generate a lifting of $H$; hence their scalar commutator is nontrivial. Since $\psi^3=\I_V$, this commutator is a primitive cube root of unity. Replacing $a$ and $\varphi$ by suitable powers coprime to $9$, if necessary, gives
	\begin{equation*}
		\varphi\psi=\zeta\psi\varphi,\qquad \ord(\varphi)=9,\qquad \ord(\psi)=3.
	\end{equation*}

	Choose a Sylow $3$-subgroup $G$ of $\Lin(F)$ containing $H$, and let $V=V_1\oplus\cdots\oplus V_k$ be an irreducible decomposition as a $\pi^{-1}(G)$-representation. Denote the restrictions of $\varphi$ and $\psi$ to $V_j$ by $\varphi_j$ and $\psi_j$. Corollary \ref{corollary: dimension of irreducible representations of G} gives $\dim V_j\in\{1,3\}$. The relation $\varphi_j\psi_j=\zeta\psi_j\varphi_j$ forces $3\mid\dim V_j$, so $k=2$ and $\dim V_1=\dim V_2=3$.

	Lemma \ref{lemma: structure of semi-commutative pair when N=p} shows that $\varphi^3$ restricts to a scalar on each $V_j$. Since the projective order of $\pi(\varphi)$ is $9$, the matrix $\varphi^3$ is not scalar on $V$; hence its two scalar restrictions are distinct, and $V_1\oplus V_2$ is precisely its eigenspace decomposition. Applying Lemma \ref{lemma: eigenspace decomposition induce split} to the order-three element $\varphi^3$ shows that $F$ splits. For $i=1,2$, let $F_i=F|_{V_i}$ and define $G_i\subseteq\PGL(V_i)$ as in Lemma \ref{lemma: liftability and decompositions}. Lemma \ref{lemma: F-liftability and decompositions} gives $G_i\subseteq\Lin(F_i)$. Lemma \ref{lemma: F-liftability for split F when p=3} then implies that one of $\Lin(F_1)$ and $\Lin(F_2)$ is $F_i$-liftable. The corresponding group $G_i$ is liftable, so Lemma \ref{lemma: liftability and decompositions} makes $G$ liftable. Every $2$-subgroup of $\Lin(F)$ is liftable by Corollary \ref{corollary: liftability of p groups when gcd(d,N,p)=1}(2), and the Sylow criterion now makes $\Lin(F)$ liftable, a contradiction.
\end{proof}

We now prove the $F$-liftability statement.

\begin{proof}[Proof of Theorem \ref{main theorem: cubic fourfold criteria}(2)]
	Only the sufficiency requires proof. By Theorem \ref{main theorem: cubic fourfold criteria}(1), $\Lin(F)$ is liftable. It suffices to prove that a Sylow $3$-subgroup $G$ of $\Lin(F)$ is $F$-liftable. If $G$ is trivial, there is nothing to prove. Choose an element $g$ of order $3$ in the center of $G$. By hypothesis, $g$ has an $F$-lifting $\widetilde g$. Consider its eigenspace decomposition:
	\begin{equation*}
		V = V_{1} \oplus \cdots \oplus V_{k}.
	\end{equation*} 
	Since $\Lin(F)$ is liftable, every subgroup $\langle g,h\rangle$ with $h\in G$ is liftable. Lemma \ref{lemma: the space decomposition} therefore shows that all $ V_{i} $ are $\pi^{-1}(G)$-invariant. Define $ G_{i} \subseteq \PGL(V_{i}) $ as in Lemma \ref{lemma: liftability and decompositions} and let $ F_{i} $ be the restriction of $ F $ to $ V_{i} $.

	If some $\dim V_i$ is not divisible by $3$, Lemma \ref{lemma: decomposition of F} shows that $F_i$ is nonsingular, and Lemma \ref{lemma: F-liftability and decompositions} gives $G_i\subseteq\Lin(F_i)$. Corollary \ref{corollary: liftability of p groups when gcd(d,N,p)=1}(2) then shows that $G_i$ is $F_i$-liftable. Lemma \ref{lemma: F-liftability and decompositions} then makes $G$ $F$-liftable.
	
	Otherwise, $k=2$ and $\dim V_1=\dim V_2=3$ because $g$ is nontrivial. Lemma \ref{lemma: eigenspace decomposition induce split} shows that $F$ splits, and Lemma \ref{lemma: F-liftability and decompositions} gives $G_i\subseteq\Lin(F_i)$ for $i=1,2$. The hypothesis implies that all abelian $3$-subgroups of order at most $9$ are liftable, so Lemma \ref{lemma: F-liftability for split F when p=3} shows that one of $\Lin(F_1)$ and $\Lin(F_2)$ is $F_i$-liftable. The corresponding $G_i$ is $F_i$-liftable, and Lemma \ref{lemma: F-liftability and decompositions} now makes $G$ $F$-liftable.
\end{proof}

\section{Sharpness Examples}
\label{section: example}
The bounded-order criteria above are sharp in general. For every prime $p\geq5$, the examples in this section show that the bound $p^3$ in Theorems \ref{main theorem: subgroup criterion of liftability when N=2p} and \ref{main theorem: subgroup criterion of F-liftability when N=2p} cannot be lowered.

We begin with a Hessian-invariance observation used to determine $\Lin(F)$ for sparse forms.

\begin{lem}
	\label{lemma: second derivative}
		Fix a basis $\{e_{1},e_{2},\cdots,e_{N}\}$ of $V$ and let $\{x_{1},x_{2},\cdots,x_{N}\}$ be its dual basis. Let $F\in S^{d}(V^{*})$ be a nonsingular homogeneous form of degree $d\geq2$. Assume $\varphi \in \GL(V)$ and $\lambda \in \KK^\times$ satisfy $F \circ \varphi = \lambda F$. Then, for every $1\leq k,l \leq N$, the polynomial $\frac{\partial^{2}F}{\partial x_{k}\partial x_{l}}\circ \varphi$ lies in the $\KK$-linear span of the polynomials $\frac{\partial^{2}F}{\partial x_{i}\partial x_{j}}$, where $1 \leq i,j \leq N$.   
\end{lem}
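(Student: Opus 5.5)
This is a chain-rule computation. Write $\varphi$ as a matrix $(a_{ij})$ with respect to the basis $\{e_i\}$, so that $x_i\circ\varphi=\sum_j a_{ij}x_j$. Differentiate the identity $F\circ\varphi=\lambda F$ twice and invert $\varphi$ to isolate the compositions $\frac{\partial^2F}{\partial x_k\partial x_l}\circ\varphi$. Nonsingularity of $F$ is not needed; only $d\ge 2$ (so the second partials make sense) and invertibility of $\varphi$ are used.

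\emph{First derivatives.} By the chain rule, for each $j$,
\begin{equation*}
	\frac{\partial}{\partial x_j}(F\circ\varphi)=\sum_{k}a_{kj}\Bigl(\frac{\partial F}{\partial x_k}\circ\varphi\Bigr)=\lambda\frac{\partial F}{\partial x_j}.
\end{equation*}

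\emph{Second derivatives.} Differentiating once more gives, for all $j,m$,
\begin{equation*}
	\sum_{k,l}a_{kj}a_{lm}\Bigl(\frac{\partial^2F}{\partial x_k\partial x_l}\circ\varphi\Bigr)=\lambda\frac{\partial^2F}{\partial x_j\partial x_m}.
\end{equation*}
In matrix form, with $H=\bigl(\frac{\partial^2F}{\partial x_k\partial x_l}\bigr)_{k,l}$ the Hessian matrix of polynomials, this reads $A^{T}(H\circ\varphi)A=\lambda H$, where $A=(a_{ij})$.

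\emph{Inversion.} Since $\varphi\in\GL(V)$, the matrix $A$ is invertible, so
\begin{equation*}
	H\circ\varphi=\lambda (A^{T})^{-1}HA^{-1}.
\end{equation*}
Each entry of the right-hand side is a $\KK$-linear combination of the entries $\frac{\partial^2F}{\partial x_i\partial x_j}$ of $H$. This is exactly the claimed containment.

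There is no real obstacle here. The only care needed is in the index bookkeeping of the chain rule, namely the convention for whether $A$ or its transpose appears. The conclusion is insensitive to this choice, since in either case the right-hand side is a linear combination of the second partials.
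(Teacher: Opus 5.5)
Your proposal is correct and follows the paper's own proof: differentiate $F\circ\varphi=\lambda F$ twice by the chain rule, write the result as a matrix identity between Hessians (your $A^{T}(H\circ\varphi)A=\lambda H$ is the paper's $\lambda\nabla^2F=A(\nabla^2F\circ\varphi)A^{T}$ with the transposed convention for $A$), and invert $A$. You are also right that nonsingularity of $F$ plays no role in the argument.
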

\begin{proof}
		Denote by $\nabla^{2}F$ the matrix $(\frac{\partial^{2}F}{\partial x_{i}\partial x_{j}})$. Let $A = (a_{ij})$ be the matrix representing $\varphi$, so that, for each $1 \leq k \leq N$,  
	\begin{equation*}
		\varphi (e_{k}) = \sum\limits_{i=1}^{N}a_{ki}e_{i}.
	\end{equation*}
	
	Therefore, for $1 \leq k \leq N$, 
	\begin{equation*}
		\varphi^{*}(x_{k}) = x_{k}\circ\varphi = \sum\limits_{i=1}^{N} a_{ik}x_{i}.
	\end{equation*}
	
	The chain rule gives, for $1 \leq k,l \leq N$,
	\begin{equation*}
		\lambda \frac{\partial^{2}F}{\partial x_{k}\partial x_{l}} = \frac{\partial^{2}(F\circ \varphi)}{\partial x_{k}\partial x_{l}} = \sum\limits_{i,j=1}^{N} a_{ki}a_{lj}(\frac{\partial^{2}F}{\partial x_{i}\partial x_{j}} \circ \varphi).
	\end{equation*}
	
	Equivalently,
	\begin{equation*}
		\lambda \nabla^{2}F = A\cdot(\nabla^{2}F\circ \varphi)\cdot A^{T},  
	\end{equation*}
	where $A^{T}$ is the transpose of $A$.
	
	Consequently,
	\begin{equation*}
		\nabla^{2}F \circ \varphi = \lambda A^{-1}\cdot\nabla^{2}F \cdot (A^{-1})^{T},
	\end{equation*}
	Thus, for every $1\leq k,l \leq N$, the polynomial $\frac{\partial^{2}F}{\partial x_{k}\partial x_{l}}\circ \varphi$ lies in the $\KK$-linear span of the polynomials $\frac{\partial^{2}F}{\partial x_{i}\partial x_{j}}$, where $1 \leq i,j \leq N$.
\end{proof}

Now suppose $N=2p$, where $p \geq 5$ is a prime. Fix a primitive $p^{2}$-th root of unity $\xi\in\KK$ and set $\zeta=\xi^p$. Fix a basis of $V$ and let $\{y_{1},\cdots,y_{p},z_{1},\cdots,z_{p}\}$ be its dual basis. Whenever a cyclic expression occurs, its subscripts are read modulo $p$.

Consider the homogeneous form
\begin{equation*}
	F_{1} = y_{1}y_{2}^{p-1}  + \cdots + y_{p}y_{1}^{p-1}
	+ z_{1}^{p} + \cdots + z_{p}^{p} + z_{1}\cdots z_{p}.
\end{equation*}
It is nonsingular. Indeed, if the partial derivatives of the $y$-part vanish and one $y_i$ is zero, then all $y_i$ vanish; if they are all nonzero, multiplying the derivative equations gives $1=-(p-1)^p$, a contradiction. The same argument for the $z$-part gives either $z_1=\cdots=z_p=0$ or $1=-p^{-p}$. Thus each summand has no nonzero critical point, and neither does $F_1$.

Consider the following two elements $\psi_{1},\psi_{2}$ of $\GL(V)$: 
\begin{equation*}
	\begin{aligned}
		&\psi_{1}^{*}(y_{i}) = y_{i+1},\ \forall\ 1\leq i \leq p;\ \psi_{1}^{*}(z_{j}) = z_{j+1},\ \forall\ 1\leq j \leq p, \\
		&\psi_{2}^{*}(y_{i}) = \xi^{ip+1}y_{i},\ \forall\ 1\leq i \leq p;\ \psi_{2}^{*}(z_{j}) = \zeta^{j}z_{j},\ \forall\ 1\leq j \leq p. \\
	\end{aligned} 
\end{equation*}

A direct check on the monomials of $F_1$ gives
$\psi_1^*(F_1)=\psi_2^*(F_1)=F_1$. Moreover, for all $i$ and $j$,
\begin{equation*}
	\psi_2^*\psi_1^*=\zeta\psi_1^*\psi_2^*,\qquad
	(\psi_2^*)^p(y_i)=\zeta y_i,\qquad
	(\psi_2^*)^p(z_j)=z_j,
\end{equation*}
and $(\psi_1^*)^p=(\psi_2^*)^{p^2}=\I_{V^*}$. Since pullback reverses
composition, the first identity gives $[\psi_1,\psi_2]=\zeta\I_V$. Hence
$\psi_1$ has order $p$ and $\psi_2$ has order $p^2$. The displayed action of
$(\psi_2^*)^p$ is not scalar, so $\pi(\psi_2)$ also has order $p^2$.
Nontrivial powers of $\psi_1^*$ permute the variables, whereas powers of
$\psi_2^*$ are diagonal, so the cyclic subgroups generated by their
projective images intersect trivially. Consequently,
\begin{equation*}
	\langle\pi(\psi_1),\pi(\psi_2)\rangle\simeq\ZZ/p\ZZ\times\ZZ/p^2\ZZ.
\end{equation*}
This is an abelian subgroup of $\Lin(F_1)$ of order $p^3$. Multiplying either matrix by a scalar does not change their nontrivial commutator, so this subgroup is not liftable. In particular, $\Lin(F_1)$ is not liftable.

We now prove that all $p$-subgroups of $\Lin(F_{1})$ of order at most $p^{2}$ are $F_{1}$-liftable.

\begin{lem}
	\label{lemma: elements in Lin(F1)}
	Assume that $\varphi\in\GL(V)$ and $\lambda\in\KK^\times$ satisfy $F_1\circ\varphi=\lambda F_1$ and $\varphi^p=\I_V$. Then $\lambda=1$ and, with indices read modulo $p$, $\varphi$ has the form
	\begin{equation*}
		\begin{aligned}
			&\varphi^{*}(y_{i}) = \mu(i)y_{k+i},\ \forall\ 1\leq i \leq p; \\
			&\varphi^{*}(z_{j}) = \delta(j)z_{\tau(j)},\ \forall\ 1\leq j \leq p, \\
		\end{aligned} 
	\end{equation*}
	where $0\leq k\leq p-1$, $\tau\in S_p$ satisfies $\tau^p=\id$, and $\mu(i),\delta(j) \in \KK^\times$ satisfy
	\begin{equation*}
		\mu(1)\mu(2)^{p-1} = \cdots = \mu(p)\mu(1)^{p-1} = \delta(1)^{p} = \cdots = \delta(p)^{p} = \delta(1)\cdots\delta(p)=1.
	\end{equation*}
\end{lem}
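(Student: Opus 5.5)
Since the statement is about sparse forms, we determine $\varphi$ with the Hessian-invariance of Lemma \ref{lemma: second derivative}. We compute the second partial derivatives of $F_1$. For the $y$-part, $\partial^2/\partial y_i^2$ gives a linear combination of $y_{i-1}y_i^{p-3}$ and $y_{i+1}^{p-2}$-type terms, and the mixed derivatives $\partial^2/\partial y_i\partial y_{i+1}$ give $(p-1)y_{i+1}^{p-2}$. For the $z$-part, $\partial^2/\partial z_j^2=p(p-1)z_j^{p-2}$, and $\partial^2/\partial z_j\partial z_l=\prod_{m\neq j,l}z_m$. Mixed $y$--$z$ derivatives vanish. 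Let $W$ be the linear span $\mathcal{H}$ of all second derivatives; it is preserved by $\varphi^*$. Its elements of degree $p-2$ that are pure $(p-2)$-th powers of linear forms, i.e.\ points of the Veronese-type variety $\{\ell^{p-2}\}\cap \mathcal H$, should be exactly multiples of $y_1^{p-2},\dots,y_p^{p-2},z_1^{p-2},\dots,z_p^{p-2}$ (here $p\ge5$, so $p-2\ge3$ and the spanning monomials are genuinely distinct). Since $\varphi^*$ maps $\ell^{p-2}$ to $(\varphi^*\ell)^{p-2}$, it permutes the lines $\KK y_i,\KK z_j$. Hence $\varphi^*$ is a monomial matrix with respect to the variables.

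The main obstacle is this middle step: verifying that no other power of a linear form lies in $\mathcal H$. I would first show that any element of $\mathcal H$ of the form $\ell^{p-2}$ has $\ell$ involving only $y$'s or only $z$'s, by separating $\mathcal H=\mathcal H_y\oplus\mathcal H_z$. Then I would compare monomial supports: if $\ell$ has two nonzero coefficients, $\ell^{p-2}$ contains a monomial like $y_a^{p-3}y_b$, which must match $y_{i-1}y_i^{p-3}$ with the correct coefficient, and I would expect the other forced monomials to give a contradiction. On the $z$-side, products of $p-2$ distinct variables and pure powers cannot combine into $\ell^{p-2}$ with $\ell$ having at least two terms, because $\ell^{p-2}$ would contain $z_a^{p-3}z_b$, which does not lie in the span.

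Next, a monomial $\varphi^*$ cannot swap $y$- and $z$-variables. The $z$-part contains the pure powers $z_j^p$, while the $y$-part contains none, and $F_1\circ\varphi=\lambda F_1$ must map monomials to monomials bijectively. So $\varphi^*(y_i)=\mu(i)y_{\sigma(i)}$ and $\varphi^*(z_j)=\delta(j)z_{\tau(j)}$. Preserving the cyclic set of monomials $y_iy_{i+1}^{p-1}$ forces $\sigma(i+1)=\sigma(i)+1$, so $\sigma$ is a rotation by some $k$. Comparing coefficients gives $\mu(i)\mu(i+1)^{p-1}=\lambda$, $\delta(j)^p=\lambda$, and $\prod\delta(j)=\lambda$. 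The condition $\varphi^p=\I_V$ gives $\tau^p=\id$ and $\lambda^p=1$.

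Finally we show $\lambda=1$. If $\tau=\id$, then $\delta(j)^p=1$ and $\varphi^p=\I$ fix everything directly. Otherwise $\tau$ is a $p$-cycle, and $\varphi^p=\I$ on the $z$-side gives $\prod_j\delta(j)=1$, hence $\lambda=1$. In the case $\tau=\id$, I would instead use the $y$-side. If $k\neq0$, then $\varphi^p=\I$ gives $\prod\mu(i)=1$, and multiplying the relations gives $\lambda^p=(\prod\mu)^p=1$. That alone does not suffice, so I would return to the $z$-side: $\delta(j)^p=1$ because $\varphi^p|_{z}=\I$ when $\tau=\id$, hence $\lambda=\delta(j)^p=1$.
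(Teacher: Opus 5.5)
Your proposal is correct and follows essentially the paper's proof: Hessian invariance (Lemma~\ref{lemma: second derivative}) forces each $(\varphi^*x)^{p-2}$ into the monomial span of the second derivatives, so $\varphi^*$ is monomial; support comparison then yields the shift $k$ and the permutation $\tau$; and $\varphi^p=\I_V$ on the $z$-variables gives $\lambda=1$. The only variation is that you separate the $y$- from the $z$-variables using the pure powers $z_j^p$ rather than the square-free monomial $z_1\cdots z_p$, which works equally well. The $y$-side check you left as an expectation is immediate: if $\ell$ involves $y_a$ and $y_b$, then $\ell^{p-2}$ contains both $y_a^{p-3}y_b$ and $y_ay_b^{p-3}$, which would require $b\equiv a-1$ and $b\equiv a+1 \pmod p$ simultaneously. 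Your final paragraph revisiting the case $\tau=\id$ is redundant.
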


\begin{proof}
	Direct differentiation gives, for $1 \leq i,j \leq p$,
	\begin{equation*}
		\begin{aligned}
			&\frac{\partial^{2}F_{1}}{\partial y_{i}\partial y_{j}} = \begin{cases}
				(p-1)y_{i}^{p-2},\ j=i-1\\
				(p-1)(p-2)y_{i-1}y_{i}^{p-3},\ j=i\\
				(p-1)y_{i+1}^{p-2},\ j=i+1\\
					0,\ \text{otherwise},\\
			\end{cases};\\  &\frac{\partial^{2}F_{1}}{\partial z_{i}\partial z_{j}} = \begin{cases}
				p(p-1)z_{i}^{p-2},\ j=i\\
					\prod\limits_{t \neq i,j}z_{t},\ \text{otherwise}.\\
			\end{cases};\\
			& \frac{\partial^{2}F_{1}}{\partial y_{i}\partial z_{j}} = 0.
		\end{aligned}
	\end{equation*}
	
	Suppose that 
	\begin{equation*}
		\begin{aligned}
			& \varphi^{*}(y_{k}) = \sum\limits_{i=1}^{p}a_{ik}y_{i} + \sum\limits_{j=1}^{p}b_{jk}z_{j},\ \forall\ 1 \leq k \leq p; \\
			& \varphi^{*}(z_{l}) = \sum\limits_{i=1}^{p}c_{il}y_{i} + \sum\limits_{j=1}^{p}d_{jl}z_{j},\ \forall\ 1 \leq l \leq p. \\
		\end{aligned}
	\end{equation*}
	
		By Lemma \ref{lemma: second derivative}, for every $1\leq k \leq p$, the polynomial
	\begin{equation*}
		\frac{\partial^{2}F_{1}}{\partial y_{k}\partial y_{k-1}} \circ \varphi
		=(p-1)\left(\sum_{i=1}^{p}a_{ik}y_{i} + \sum_{j=1}^{p}b_{jk}z_{j}\right)^{p-2}
	\end{equation*}
	is a $\KK$-linear combination of $y_{i}^{p-2}$, $y_{i-1}y_{i}^{p-3}$, $z_{j}^{p-2}$, and $\prod_{t \neq i,j}z_{t}$ for $i\neq j$. If the linear form in parentheses contained two variables with nonzero coefficients, its $(p-2)$-nd power would contain at least two distinct mixed powers of those variables, with nonzero coefficients because $\operatorname{char}(\KK)=0$ and $p-2\geq3$. At least one of these mixed powers is absent from the displayed Hessian span, a contradiction. Thus at most one term in $\{a_{1k},\cdots,a_{pk},b_{1k},\cdots,b_{pk}\}$ is nonzero. Applying the same argument to $\frac{\partial^2F_1}{\partial z_l^2}\circ\varphi$ gives the analogous conclusion for $\{c_{1l},\cdots,c_{pl},d_{1l},\cdots,d_{pl}\}$. Since $\varphi$ is invertible, its pullback is therefore a monomial transformation: it induces a permutation $\sigma$ of $\{y_{1},\cdots,y_{p},z_{1},\cdots,z_{p}\}$ and has the form
	\begin{equation*}
		\begin{aligned}
			&\varphi^{*}(y_{i}) = \mu(i)\sigma(y_{i}),\ \forall\ 1\leq i \leq p; \\
			&\varphi^{*}(z_{j}) = \delta(j)\sigma(z_{j}),\ \forall\ 1\leq j \leq p, \\
		\end{aligned}
	\end{equation*}
	where $\mu(i),\delta(j) \in \KK^\times$. 
	
	Distinct monomials remain distinct under a monomial transformation, so no cancellation occurs among their images. The monomial $z_1\cdots z_p$ is the unique square-free monomial of degree $p$ in the support of $F_1$. Hence $\sigma(z_1)\cdots\sigma(z_p)=z_1\cdots z_p$, and $\sigma$ preserves the two sets $\{y_{1},\cdots,y_{p}\}$ and $\{z_{1},\cdots,z_{p}\}$. 
	
	Comparing the monomial support of the $y$-part of $F_1$ shows that the restriction of $\sigma$ to $\{y_1,\ldots,y_p\}$ is an automorphism of the directed $p$-cycle, hence a cyclic shift. On the $z$-variables, the equality $\sigma^p=\id$ implies that the restriction is either the identity or a $p$-cycle. Thus there exist $0\leq k\leq p-1$ and a permutation $\tau\in S_p$ with $\tau^p=\id$ such that
	\begin{equation*}
		\begin{aligned}
			&\sigma(y_{i}) = y_{i+k},\ \forall\ 1\leq i \leq p; \\
			&\sigma(z_{j}) = z_{\tau(j)},\ \forall\ 1\leq j \leq p. \\
		\end{aligned}
	\end{equation*}
	
	Since $F_{1}\circ\varphi = \lambda F_{1}$, we have:
	\begin{equation*}
		\mu(1)\mu(2)^{p-1} = \cdots = \mu(p)\mu(1)^{p-1} = \delta(1)^{p} = \cdots = \delta(p)^{p} = \delta(1)\cdots\delta(p)=\lambda.
	\end{equation*}

	It remains to prove that $\lambda=1$. If $\tau=\id$, then $z_1=(\varphi^*)^p(z_1)=\delta(1)^p z_1$, so $\lambda=\delta(1)^p=1$. Otherwise $\tau$ is a $p$-cycle, and $z_1=(\varphi^*)^p(z_1)=\delta(1)\cdots\delta(p)z_1$, so again $\lambda=1$.
\end{proof}

\begin{cor}
	\label{corollary: example for main theorem}
	The group $\Lin(F_1)$ is not liftable, whereas all its $p$-subgroups of order at most $p^2$ are $F_1$-liftable.
\end{cor}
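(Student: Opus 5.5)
The non-liftability of $\Lin(F_1)$ was already established before the statement: the abelian subgroup $\langle\pi(\psi_1),\pi(\psi_2)\rangle\simeq\ZZ/p\ZZ\times\ZZ/p^2\ZZ$ has lifts with nontrivial scalar commutator $\zeta\I_V$. So the remaining task is to show that every $p$-subgroup $P\subseteq\Lin(F_1)$ with $|P|\leq p^2$ is $F_1$-liftable. Such a $P$ is cyclic of order $p$ or $p^2$, or isomorphic to $(\ZZ/p\ZZ)^2$. I plan to treat these three cases separately, using Lemma~\ref{lemma: elements in Lin(F1)} as the main tool.

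\textbf{Order $p$.} Let $g\in\Lin(F_1)$ have order $p$. By Lemma~\ref{lemma: cover of p-groups}, $g$ has a preimage $\varphi$ of $p$-power order with $\varphi^p$ scalar. Rescaling by a $p$-th root of that scalar, we may take $\varphi^p=\I_V$. Write $F_1\circ\varphi=\lambda F_1$. Lemma~\ref{lemma: elements in Lin(F1)} then gives $\lambda=1$, so $\varphi$ is an $F_1$-lifting of $g$.

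\textbf{Cyclic of order $p^2$.} Let $g$ have order $p^2$. By Theorem~\ref{theorem: order of elements in Lin(F)}(2) with $N=2p$ and $d=p$, a non-$F$-liftable element has order at most $p$. Hence $g$ is $F_1$-liftable, and so is $\langle g\rangle$.

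\textbf{$(\ZZ/p\ZZ)^2$.} Let $P=\langle a,b\rangle$. By the order-$p$ case, $a$ and $b$ have $F_1$-preserving lifts $\varphi,\psi$ with $\varphi^p=\psi^p=\I_V$. Since $\pi(\varphi)$ and $\pi(\psi)$ commute, $[\varphi,\psi]=\eta\I_V$ with $\eta^p=1$. If $\eta=1$, then $\langle\varphi,\psi\rangle$ is abelian, generated by commuting order-$p$ matrices, and maps isomorphically onto $P$, giving an $F_1$-lifting. So the task is to rule out $\eta\neq1$.

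For this, apply Lemma~\ref{lemma: elements in Lin(F1)} to both $\varphi$ and $\psi$. Each acts monomially, preserves the $y$-block and the $z$-block, acts on the $y$-variables by a cyclic shift (with shifts $k$ and $k'$), and acts on the $z$-variables by a permutation $\tau$ or $\tau'$ with $\tau^p=\tau'^p=\id$.
\begin{itemize}
\item The commutator is scalar, so the commutator of the underlying permutations is trivial and $\tau,\tau'$ commute. Two commuting elements of $S_p$ of order dividing $p$ lie in a common cyclic group $\langle c\rangle$ with $c$ a $p$-cycle.
\item Replacing the generators $a,b$ by suitable products of powers (this keeps the $F_1$-preserving order-$p$ lifts, since $p$ is odd, and changes $\eta$ only by a nonzero power), we may assume $\psi$ permutes the $z$-variables trivially. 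Then $\psi$ is diagonal on $z$ with entries $\delta'(j)$ satisfying $\delta'(j)^p=1$ and $\prod_j\delta'(j)=1$.
\item Restricting the relation $\varphi\psi=\eta\psi\varphi$ to the $z$-block, consider two subcases. If $\varphi$ is also diagonal on $z$, the diagonal parts commute, forcing $\eta=1$. If $\varphi$ permutes the $z$-variables by a $p$-cycle $c$, the relation gives $\delta'(c(j))=\eta\,\delta'(j)$ (up to convention). Then $\delta'(j)=\delta'(1)\eta^{j'}$ along the cycle, so $\prod_j\delta'(j)=\delta'(1)^p\eta^{p(p-1)/2}=1$. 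This is automatically satisfied, so no contradiction arises yet.
\item The real constraint therefore has to come from combining the $z$-block with the $y$-block, or from the condition $\delta'(j)^p=1$ together with the shifts. I expect this to be the \textbf{main obstacle}. I would compare with the example in the text: there $\psi_2$ has $y$-entries $\xi^{ip+1}$, which are not $p$-th roots of unity, and it is exactly this that produces order $p^2$.
\item For order-$p$ monomial matrices on the $y$-block, the constraint $\mu(i)\mu(i+1)^{p-1}=1$ with a shift $k$ should force $\mu(i)^{p}$-type conditions. Setting $\mu(i)=\omega^{e_i}$, the $y$-relations give $e_i\equiv e_{i+1}$ modulo an appropriate power, so the $\mu(i)$ are constant up to $p$-th roots.
\item I would then show that the scalar $\eta$ read off on the $y$-block and the same $\eta$ read off on the $z$-block cannot both be nontrivial with every $p$-th power equal to $\I$. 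The key is that a Heisenberg-type relation $\varphi\psi=\eta\psi\varphi$ on the $p$-dimensional $y$-block requires one of the matrices to be diagonal with distinct eigenvalues $\mu(i)$ in a geometric progression with ratio $\eta$. That is incompatible with $\mu(i)\mu(i+1)^{p-1}=1$, since substituting gives $\mu^p\eta^{-(p-1)}=1$ for all $i$, while order $p$ forces $\mu^p=1$, so $\eta=1$.
\end{itemize}
This would yield $\eta=1$ and complete the proof.
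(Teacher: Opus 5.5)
Your cyclic cases match the paper's argument. In the $(\ZZ/p\ZZ)^2$ case your final bullet is close to the paper's proof, but the step it rests on is unsupported. A Heisenberg relation $\varphi\psi=\eta\psi\varphi$ between two monomial matrices on the $y$-block does \emph{not} force one of them to be diagonal in the $y$-basis. For instance, let $S$ be the cyclic shift $e_i\mapsto e_{i+1}$ and $D=\diag(\rho,\rho^2,\ldots,\rho^p)$ with $\rho$ a primitive $p$-th root of unity. Then $S$ and $DS$ both have shift $1$, and $(DS)S=\rho\,S(DS)$. Lemma~\ref{lemma: structure of semi-commutative pair when N=p} gives diagonality only in \emph{some} basis, not in the $y$-basis. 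This matters because your condition $\mu(i)^p=1$ is available only for a lift of shift $0$. For a lift with nonzero shift, $\varphi^p=\I_V$ only yields $\prod_i\mu(i)=1$.

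The missing step is exactly the paper's key move. Scalar rescaling does not change the permutation part of a lift, and shifts add under composition. So the $y$-shift defines a homomorphism $k\colon P\to\ZZ/p\ZZ$, whose kernel is nontrivial since $|P|=p^2$. Take the generator $a$ in this kernel; changing generators by products of powers keeps order-$p$, $F_1$-preserving lifts because $p$ is odd, as you already observed for the $z$-block. Then $\varphi^*(y_i)=\mu(i)y_i$ with $\mu(i)^p=1$, and the relation $\mu(i)\mu(i+1)^{p-1}=1$ gives $\mu(i)=\mu(i+1)$ at once. Thus $\varphi^*$ is scalar on the span of the $y_i$, which $\psi^*$ preserves. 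Hence $[\varphi,\psi]^*$ fixes every $y_i$, and the scalar $\eta$ equals $1$.

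With this in place, the $z$-block analysis and the geometric-progression contradiction are unnecessary. The latter would also need a small correction: the ratio between consecutive $\mu(i)$ is a power of $\eta$ determined by the other generator's shift, not $\eta$ itself, though this does not change the conclusion.
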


\begin{proof}
		It remains to verify the assertion about the small subgroups. First let $g\in\Lin(F_1)$ have order $p^r$. If $r\geq2$, Theorem \ref{theorem: order of elements in Lin(F)} shows that $g$ is $F_1$-liftable, since every non-$F_1$-liftable $p$-element has order $p$. If $r=1$, choose an order-$p$ lifting $\varphi$ of $g$; Lemma \ref{lemma: elements in Lin(F1)} shows that $F_1\circ\varphi=F_1$. Thus every cyclic $p$-subgroup of $\Lin(F_1)$ is $F_1$-liftable.
	
	Now let $G$ be a noncyclic $p$-subgroup of order at most $p^2$. Then $G\simeq(\ZZ/p\ZZ)^2$. Choose generators $g_1,g_2$ and $F_1$-liftings $\widetilde g_1,\widetilde g_2$. By Lemma \ref{lemma: elements in Lin(F1)}, we may write
	\begin{equation*}
		\begin{aligned}
			&\widetilde{g_{m}}^{*}(y_{i}) = \mu_{m}(i)y_{i+k_{m}},\ \forall\ 1\leq i \leq p; \\
			&\widetilde{g_{m}}^{*}(z_{j}) = \delta_{m}(j)z_{\tau_m(j)},\ \forall\ 1\leq j \leq p, \\
		\end{aligned} 
	\end{equation*}
	where $0\leq k_{m}\leq p-1$, $\tau_m^p=\id$, and $\mu_{m}(i),\delta_{m}(j) \in \KK^\times$ satisfy
	\begin{equation*}
		\mu_{m}(1)\mu_{m}(2)^{p-1} = \cdots = \mu_{m}(p)\mu_{m}(1)^{p-1} = \delta_{m}(1)^{p} = \cdots = \delta_{m}(p)^{p} = \delta_{m}(1)\cdots\delta_{m}(p)=1.
	\end{equation*}
	
	Scalar rescaling does not change the permutation part of a lift, so the shift $k(g)\in\ZZ/p\ZZ$ depends only on $g\in G$. Composition adds shifts, and hence $k\colon G\to\ZZ/p\ZZ$ is a homomorphism. Its kernel contains a nontrivial element, so we may choose the generators such that $k_1=0$. Since $\widetilde g_1^p=\I_V$, we have
	\begin{equation*}
		\mu_{1}(1)^{p}=\mu_{1}(2)^{p}=\cdots=\mu_{1}(p)^{p}=1.
	\end{equation*}
	
	The relations $\mu_1(i)\mu_1(i+1)^{p-1}=1$ now give $\mu_1(i)=\mu_1(i+1)$ for every $i$. Thus $\mu_{1}(1) = \mu_{1}(2) = \cdots = \mu_{1}(p)$, and
	\begin{equation*}
		[\widetilde{g_{2}},\widetilde{g_{1}}]^{*}(y_{i})= y_{i} \circ [\widetilde{g_{2}},\widetilde{g_{1}}]= y_{i},\ \forall\ 1\leq i \leq p.
	\end{equation*}
	
	Since $\pi([\widetilde g_2,\widetilde g_1])=[g_2,g_1]=\id$, the commutator is scalar; the preceding equality on the $y$-variables forces this scalar to be $1$. Hence $\widetilde g_1$ and $\widetilde g_2$ commute. Their projective images are independent generators of order $p$, so the group they generate maps isomorphically onto $G$ and is an $F_1$-lifting.
\end{proof}

\begin{rmk}
	For every prime $p\geq5$, the form $F_1$ gives a group $\Lin(F_1)$ that is neither liftable nor $F_1$-liftable, although all its $p$-subgroups of order at most $p^2$ are $F_1$-liftable. Consequently, the bound $p^3$ in Theorems \ref{main theorem: subgroup criterion of liftability when N=2p} and \ref{main theorem: subgroup criterion of F-liftability when N=2p} is optimal.
\end{rmk}

\bibliography{reference}
\end{document}